\newcommand*{\autoreftheorem}[2]{%
  \newaliascnt{#1}{theorem}%
  \newtheorem{#1}[#1]{#2}%
  \aliascntresetthe{#1}%
  \expandafter\newcommand\csname #1autorefname\endcsname{#2}%
}
\theoremstyle{plain}
\theoremstyle{definition}
\newcommand{\Frac}[2]{\displaystyle{\frac{#1}{#2}}}
\newcommand{\Ps}{Poincar\'e series}
\newcommand{\bbeta}{\widehat{\beta}}
\newcommand{\ggamma}{\widehat{\gamma}}
\newcommand{\llambda}{\widehat{\lambda}}
\newcommand{\ttheta}{\widehat{\theta}}
\newcommand{\uu}{\widehat{u}}
\newcommand{\vv}{\widehat{v}}
\newcommand{\wrr}{\widehat{r}}
\newcommand{\wt}{\widehat{t}}
\newcommand{\wl}{\widehat{\ell}}
\newcommand{\hh}{\widehat{h}}
\newcommand{\bb}{\widehat{b}}
\newcommand{\Tm}[1]{T_{m}^{(#1)}}
\newcommand{\Tmbar}[1]{\overline{T}_{m}^{(#1)}}
\newcommand{\ie}{i.e.,}
\newcommand{\CEE}[3]{\Ext_{G(m+1)}^{#1}(\Tmbar{#2} \otimes #3)}
\newcommand{\specialmassey}[2]{\mu_{#1}(#2)}
\renewcommand{\SS}{spectral sequence} 
\newcommand{\DGA}{differential graded algebra}
\newcommand{\SES}{short exact sequence}
\newcommand{\LES}{long exact sequence}
\newcommand{\ANSS}{Adams-Novikov spectral sequence}
\newcommand{\CESS}{Cartan-Eilenberg spectral sequence}
\newcommand{\CSS}{chromatic spectral sequence}
\newcommand{\RSS}{resolution spectral sequence}
\newcommand{\SDSS}{small descent spectral sequence}
\newcommand{\coker}{\operatorname{coker}}
\newcommand{\im}{\operatorname{im}}
\newcommand{\Ext}{\operatorname{Ext}}
\newcommand{\ints}{\mathbb{Z}}
\def\nequiv{{{\equiv} \kern -.6em {/\:} }}
\begin{document}
\title[The Method of Infinite Descent II]{The Method of Infinite Descent in Stable Homotopy Theory II}

\author[Hirofumi Nakai]{Hirofumi Nakai}
\address{Department of Mathematics, Tokyo City University, Tokyo, Japan} 
\email{hnakai@tcu.ac.jp}

\author[Douglas C. Ravenel]{Douglas C. Ravenel}
\address{Department of Mathematics, University of Rochester, Rochester, NY} 
\email{doug@math.rochester.edu}

\begin{abstract}
This paper is a continuation of the version I of the same title,   
which intends to clarify and expand the results in the last chapter of ``the green book'' by the second author.  
In particular, we give the stable homotopy groups of $p$-local spectra $T(m)_{(1)}$ for $m>0$.  
This is a part of a program to compute the $p$-components of $\pi_{*}(S^{0})$ 
through dimension $2p^{4}(p-1)$ for $p>2$.  
We will refer to the results from the version I freely as if they 
were in the first four sections of this paper, 
which begins with section 5.
\end{abstract}

\maketitle

\setcounter{section}{4}

\section{Introduction}

In \cite{Rav:MU} the second author described a
method for computing the Adams-Novikov $E_{2}$-term for spheres 
and used it to determine the stable homotopy groups 
through dimension 108 for $p=3$ and 999 for $p=5$.  
The latter computation was a substantial
improvement over prior knowledge, and neither has been improved upon since.  
It is generally agreed among homotopy theorists that it is not
worthwhile to try to improve our knowledge of stable homotopy groups
by a few stems, but that the prospect of increasing the known range by
a factor of $p$ would be worth pursuing.  
This possibility may be
within reach now, due to a better understanding of the methods of
\cite[Chapter 7]{Rav:MU} and improved computer technology.  
This paper should be regarded as laying the foundation for a program to compute
$\pi_{*}(S^{0})_{(p)}$ through roughly dimension $2p^{4}(p-1)$,
{\ie} 324 for $p=3$ and 5,000 for $p=5$.

\subsection{Summary of I}

First we review the version I \cite{Rav:Ch7I} briefly.  
The method referred to in the title involves the connective $p$-local
ring spectra $T(m)$ satisfying
\[
BP_{*}(T(m)) = BP_{*}[t_{1}, \dots, t_{m}] \subset BP_{*} (BP) 
\]
and the natural map $T(m)\to BP$ which is an equivalence below dimension $|t_{m+1}|$. 
In particular, we have 
$T(0) = S^{0}_{(p)}$
and 
$T(\infty) = BP$.

For a Hopf algebroid $(A,\Gamma)$ and $\Gamma$-comodule $M$, 
we will often drop the first variable of Ext for short, 
{\ie} 
$\Ext_{\Gamma}(A,M)$
will be denoted by $\Ext_{\Gamma}(M)$.  
If we define the quotient module $\Gamma(k)$ by 
\[
\Gamma(k) 
= BP_{*}(BP)/(t_{1}, \ldots, t_{k-1}) 
\cong BP_{*}[t_{k}, t_{k+1}, \ldots]
\]
then the pair $(BP_{*},\Gamma(k))$ forms a Hopf algebroid, 
whose structure maps are inherited from $(BP_{*},BP_{*}(BP))$.  
Note that $\Gamma(1) = BP_{*}(BP)$.  
By the change-of-rings isomorphism \cite[Theorem A1.3.12]{Rav:MU2nd}, 
the Adams-Novikov $E_{2}$-term for $T(m)$
is reduced to $\Ext_{\Gamma(m+1)}^{*}(BP_{*})$.

It is not difficult to find the structure of
$\Ext_{\Gamma(m+1)}^{*}(BP_{*})$ in low dimensions.  
For example, it is known below dimension $p|v_{m}|$ 
and below dimension $p|v_{m+1}|$ \cite[Theorem 7.1.9 and 7.1.13]{Rav:MU}.
Moreover, the structure of $\Ext_{\Gamma(m+1)}^{*}(BP_{*})$ below
dimension $p^{2}|v_{m+1}|$ was determined in
Theorem 4.5.

We have constructed the {\SES} of $\Gamma(m+1)$-comodules
\setcounter{equation}{\value{thm}}
\begin{equation}\label{ses-DE-1}
0 \longrightarrow 
    BP_{*} \stackrel{i_{1}}{\longrightarrow} 
       D_{m+1}^{0} \stackrel{j_{1}}{\longrightarrow} 
          E_{m+1}^{1} \longrightarrow 0 
\qquad
\mbox{for $m \ge 0$}
\end{equation}
\setcounter{thm}{\value{equation}}%
where the map $i_{1}$ induces an isomorphism of $\Ext^{0}$ 
(cf.~Theorem 3.7), 
and $D_{m+1}$ is a weak injective $\Gamma(m+1)$-comodule.  
So, we have isomorphisms  
\[
\Ext_{\Gamma(m+1)}^{t}(E_{m+1}^{1}) \cong \Ext_{\Gamma(m+1)}^{t+1}(BP_{*})
\quad
\mbox{for $t \ge 0$.} 
\]
We may also assume the existence of the {\SES} 
\setcounter{equation}{\value{thm}}
\begin{equation}\label{ses-DE-2}
0 \longrightarrow E_{m+1}^{1} \stackrel{i_{2}}{\longrightarrow} D_{m+1}^{1} \stackrel{j_{2}}{\longrightarrow} E_{m+1}^{2} \longrightarrow 0. 
\end{equation}
\setcounter{thm}{\value{equation}}%
where $D_{m+1}^{1}$ is weak injective: 
it is specifically constructed in Lemma 4.1 for $m=0$ and odd $p$
with the map $i_{2}$ inducing an isomorphism in $\Ext^{0}$. 
For $m>0$, it is shown that 
$v_{1}^{-1}E_{m+1}^{1}$ is weak injective with 
\[
\Ext_{\Gamma(m+1)}^{0}(v_{1}^{-1}E_{m+1}^{1}) \cong v_{1}^{-1}\Ext_{\Gamma(m+1)}^{1}(BP_{*})
\]
thus we may regard $D_{m+1}^{1}$ as $v_{1}^{-1}E_{m+1}^{1}$ at worst 
(cf.~Lemma 3.18).  

Of course, it is desirable to define $D_{m+1}^{1}$ for $m>0$ to make its $\Ext^{0}$ as small as possible.
If we assume that the map $i_{2}$ for induces an isomorphism in $\Ext^{0}$, 
then we have isomorphisms 
\[
\Ext_{\Gamma(m+1)}^{t}(E_{m+1}^{2})
\cong 
\Ext_{\Gamma(m+1)}^{t+2}(BP_{*}) 
\quad
\mbox{for $t \ge 0$.}
\]
We constructed such isomorphisms\footnote{
Unfortunately, $i_{2}$ induces an isomorphism in $\Ext^{0}$
only below dimension $p|v_{m+2}|$ for $m>0$. 
See \autoref{failure-5.2}.
}
and computed the Ext groups below dimension $p^{2}|v_{m+1}|$ 
by producing $E_{m+1}^{2}$ satisfying some desirable conditions 
and the weak injective $D_{m+1}^{1}$ as the induced extension 
(cf.~Corollary 4.3):
\[
\xymatrix{
0 \ar[r] & E_{m+1}^{1} \ar[r]^{i_{2}} \ar@{=}[d] & D_{m+1}^{1} \ar[r]^{j_{2}} \ar@{^{(}->}[d] & E_{m+1}^{2} \ar[r] \ar@{^{(}->}[d] & 0  \\
0 \ar[r] & E_{m+1}^{1} \ar[r] & v_{1}^{-1}E_{m+1}^{1} \ar[r] & E_{m+1}^{1}/(v_{1}^{\infty}) \ar[r] & 0. 
}
\]
Since there is no Adams-Novikov differential and no nontrivial group extension in this range 
(except in the case $m=0$ and $p=2$), 
this also determines $\pi_{*}(T(m))$ in the same range.  
This was the goal of the version I.

\subsection{Introduction to II}

To descend from $T(m+1)$ to $T(m)$, 
we can consider some interpolating spectra $T(m)_{(i)}$
introduced in 
Lemma 1.15. 
Each $T(m)_{(i)}$ is the $T(m)$-module spectrum satisfying 
\[
BP_{*}(T(m)_{(i)}) = BP_{*} (T(m))\{t_{m+1}^{\ell} \mid 0 \le \ell < p^{i} \} 
\]
and the natural equivalence $T(m)_{(i)} \to T(m+1)$ below dimension $p^{i}|t_{m+1}|$. 
In particular, we have 
$T(m)_{(0)} = T(m)$
and 
$T(m)_{(\infty)} = T(m+1)$.

The Adams-Novikov $E_{2}$-term for $T(m)_{(i)}$ is 
\[
E_{2}^{n,*} = 
\Ext_{BP_{*}(BP)}^{n,*}(BP_{*},BP_{*}(\Tm{i}))
\]
and it is reduced to $\Ext_{\Gamma(m+1)}^{n,*}(BP_{*}, \Tm{i})$ by
Lemma 1.15, 
where $\Tm{i}$ is the $BP_{*}$-module generated by
$\{ t_{m+1}^{\ell} \mid 0 \le \ell < p^i \}$.
Then, we have the $3$-term resolution
of $\Tm{i}$ by tensoring the {\SES} \eqref{ses-DE-1} with $\Tm{i}$,
and the associated {\SS} $\{ E_{r}^{n,t}, d_{r} \}_{r \ge 1}$
converging to $\Ext_{\Gamma(m+1)}^{*}(\Tm{i})$ with
\setcounter{equation}{\value{thm}}
\begin{equation}\label{rss-E1}
E_{1}^{n,t}
= 
\begin{cases}
\Ext_{\Gamma (m+1)}^{0}(\Tm{i} \otimes_{BP_{*}} D_{m+1}^{0})
             &\mbox{for $(n,t)= (0,0)$,}    \\
\Ext_{\Gamma (m+1)}^{t}(\Tm{i} \otimes_{BP_{*}} E_{m+1}^{1})
             &\mbox{for $n=1$,}             \\
0            &
\mbox{otherwise.}
\end{cases}
\end{equation}
\setcounter{thm}{\value{equation}}%
The only nontrivial differential is 
$d_{1}: E_{1}^{0,0} \to E_{1}^{1,0}$
induced by $j_{1}$ \eqref{ses-DE-1}, 
and the {\SS} collapses from $E_{2}$-term.  
Thus we have

\begin{prop}\label{prop-ANSS-Tmi}
The Adams-Novikov $E_{2}$-term for $T(m)_{(i)}$ is 
\[
\Ext_{\Gamma(m+1)}^{n}(\Tm{i})
\cong 
\begin{cases}
\ker d_{1}
 & \mbox{for $n=0$},  \\
\coker d_{1}
 & \mbox{for $n=1$},  \\
\Ext_{\Gamma (m+1)}^{n-1}(\Tm{i} \otimes_{BP_{*}}E_{m+1}^{1})
 & \mbox{for $n \ge 2$.}  
\end{cases}
\]
Note that the groups for $n=0$ and $1$ were determined 
in \cite[Proposition 2.5, Theorem 4.1 and \S 5]{NR:GeneralizedImageJ}
(See also \autoref{prop-H(C)}).
\end{prop}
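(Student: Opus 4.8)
The plan is to derive the proposition as a direct consequence of the spectral sequence described immediately before its statement. Since the author has already set up the resolution spectral sequence $\{E_r^{n,t}, d_r\}$ converging to $\Ext_{\Gamma(m+1)}^*(\Tm{i})$ with $E_1$-term given explicitly in \eqref{rss-E1}, and has identified that the only nontrivial differential is $d_1 : E_1^{0,0} \to E_1^{1,0}$ induced by $j_1$, the bulk of the argument reduces to reading off the $E_\infty$-term and reassembling it into the displayed answer.

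First I would apply the tensor functor $\Tm{i} \otimes_{BP_*} (-)$ to the short exact sequence \eqref{ses-DE-1}. Because $\Tm{i}$ is a free $BP_*$-module (generated by $\{t_{m+1}^\ell \mid 0 \le \ell < p^i\}$), tensoring preserves exactness, yielding a short exact sequence of $\Gamma(m+1)$-comodules
\[
0 \to \Tm{i} \to \Tm{i} \otimes_{BP_*} D_{m+1}^0 \to \Tm{i} \otimes_{BP_*} E_{m+1}^1 \to 0.
\]
The associated long exact sequence in $\Ext_{\Gamma(m+1)}^*$ is exactly what the resolution spectral sequence encodes. The key structural input is that $D_{m+1}^0$ is a weak injective comodule, so $\Tm{i} \otimes_{BP_*} D_{m+1}^0$ has its higher Ext groups vanishing — this is what forces $E_1^{n,t}$ to be concentrated in $n=0$ (only $t=0$) and $n=1$, as recorded in \eqref{rss-E1}. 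Consequently the spectral sequence is extremely sparse: for $n \ge 2$ there is no room for differentials into or out of the $E_1^{1,t}$ entries, so those terms survive unchanged to $E_\infty$ and give $\Ext_{\Gamma(m+1)}^{n}(\Tm{i}) \cong \Ext_{\Gamma(m+1)}^{n-1}(\Tm{i} \otimes_{BP_*} E_{m+1}^1)$ for $n \ge 2$.

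Next I would analyze the low-degree corner where $d_1 : E_1^{0,0} \to E_1^{1,0}$ acts. Here the convergence of the spectral sequence identifies $\Ext_{\Gamma(m+1)}^0(\Tm{i})$ with the $E_\infty^{0,0}$ term, which is $\ker d_1$, and identifies $\Ext_{\Gamma(m+1)}^1(\Tm{i})$ with the contribution from $E_\infty^{1,0}$, which is $\coker d_1$ (there being no other filtration contributions in total degree $1$, since $E_1^{0,t}=0$ for $t>0$). This reassembly is essentially the standard five-term exact sequence of the long exact Ext sequence, now made clean by the weak injectivity of $D_{m+1}^0$ and the freeness of $\Tm{i}$.

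I expect no serious obstacle here, as the statement is explicitly presented as the conclusion of a collapsing spectral sequence that has already been set up. The one point requiring care — rather than difficulty — is verifying that tensoring with $\Tm{i}$ genuinely preserves the weak injectivity needed to guarantee $\Ext_{\Gamma(m+1)}^t(\Tm{i} \otimes_{BP_*} D_{m+1}^0) = 0$ for $t > 0$, so that the $E_1$-term really is concentrated as claimed in \eqref{rss-E1}; this relies on $\Tm{i}$ being $BP_*$-free and on the behavior of weak injectives under such base change. The final remark, attributing the $n=0$ and $n=1$ computations to \cite{NR:GeneralizedImageJ}, is a pointer to prior work rather than something to be reproved.
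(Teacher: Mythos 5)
Your proposal is correct and follows essentially the same route as the paper: the proposition is stated there as an immediate consequence of the resolution spectral sequence \eqref{rss-E1}, which is concentrated in filtrations $n=0,1$ (by weak injectivity of $D_{m+1}^{0}$ and $BP_{*}$-freeness of $\Tm{i}$, the latter preservation being Lemma 1.14 of version I), so it collapses at $E_{2}$ after the single differential $d_{1}$, exactly as you argue. The one point you flag as requiring care — that $\Tm{i}\otimes_{BP_{*}}D_{m+1}^{0}$ remains weak injective — is precisely the input the paper imports from version I, so your reconstruction matches the intended proof.
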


Once we know about $T(m)_{(i+1)}$ for some $i$, 
we can descend the value of $i$ by using the {\SDSS} 
(Theorem 1.21), 
whose $E_{1}$-term is 
\[
E(h_{m+1,i}) \otimes P(b_{m+1,i}) \otimes \pi_{*}(T(m)_{(i+1)})
\]
where 
$h_{m+1,i} \in E_{1}^{1,2p^{i}(p^{m+1}-1)}$
and 
$b_{m+1,i} \in E_{1}^{2,2p^{i+1}(p^{m+1}-1)}$
are permanent cycyles.  
Note that we know $\pi_{*}(T(1)_{(3)})$ below dimension $p^3|t_{2}|$ 
by 
Theorem 4.5 
without any use of {\SS}s, 
since the dimension is smaller than $p^2|t_{3}|$
and $T(1)_{(3)} = T(2)$ in that range.  
This allows us to compute $\pi_{*}(T(1))$ 
from the information of $\pi_{*}(T(1)_{(3)})$.  
Since $T(0)_{(4)} = T(1)$ below dimension $p^{4}|v_{1}|$, 
this also makes possible to have $\pi_{*}(S^{0})$ in the same range.

\bigskip

In this paper we assume that $m>0$ unless otherwise noted.  
The main results are the determination of the Adams-Novikov $E_{2}$-terms for 
$T(m)_{(1)}$ below dimension $p|v_{m+3}|$
in \autoref{CESS-Einfty}.  
In this range there is still no room for Adams-Novikov differentials, so
the homotopy and Ext calculations coincide\footnote{For $m=0$, the second author determined the structure of
$\Ext_{\Gamma(1)}^{*+2}(T_{0}^{(1)})$ in \cite[Theorem 7.5.1]{Rav:MU2nd} for
$p>2$ below dimension $(p^{3}+p)|v_{1}|$.}.  
It is only when we pass from $T(m)_{(1)}$ to $T(m)$ 
that we encounter Adams-Novikov
differentials below dimension $p^{2}|v_{m+2}|$.  For $m=0$, the first
of these is the Toda differential 
$d_{2p-1}(\beta_{p/p})=\alpha_{1}\beta_{1}^{p}$ 
of \cite{Tod:Rel} and \cite{Tod:XPow}, 
and the relevant calculations were the subject of
\cite[Chapter 7]{Rav:MU2nd}.  
An analogous differential for $m>0$ was also established in \cite{Rav:First}, and we will discuss it somewhere else
in the future.

\section{A variant of {\CESS}}\label{sec-E2'}

Assume that $M$ is a $\Gamma(k)$-comodule for some $k$.  
Once we know the structure of $\Ext_{\Gamma(k)}^{*}(M)$, 
there is an inductive step reducing the value of $k$.  
Set 
\[
A(k) = \ints_{(p)}[v_{1}, \ldots, v_{k}] 
\quad
\mbox{and}
\quad
G(k) = A(k)[t_{k}].
\]  
The pair $(A(k),G(k))$ is a Hopf algebroid.  
Then we have an extension of Hopf algebroids 
(cf.~Proposition 1.2)
\[
(A(m),G(m))
\longrightarrow 
(BP_{*},\Gamma(m))
\longrightarrow 
(BP_{*},\Gamma(m+1))
\]
and the associated {\CESS}
\[
\Ext_{G(m)}^{*}(\Ext_{\Gamma(m+1)}^{*}(M))
\quad
\Longrightarrow
\quad
\Ext_{\Gamma(m)}^{*}(M).
\]
A $\Gamma(m+1)$-comodule $M$ is naturally a $\Gamma(m+2)$-comodule, 
and we will denote $\Ext_{\Gamma(m+2)}^{0}(M)$ by $\overline{M}$ for short.  
In particular, we have 
\[
\Tmbar{i} = 
A(m+1)\{ t_{m+1}^{\ell} \mid 0 \le \ell < p^{i} \}. 
\] 
Then the Cartan-Eilenberg $E_{2}$-terms converging to 
$\Ext_{\Gamma(m+1)}^{*}(\Tm{i} \otimes_{BP_{*}} E_{m+1}^{1})$
is 
\setcounter{equation}{\value{thm}}
\begin{align}
\tilde{E}_{2}^{s,t}
& = \Ext_{G(m+1)}^{s}(\Ext_{\Gamma(m+2)}^{t}(\Tm{i} \otimes_{BP_{*}} E_{m+1}^{1})) \nonumber\\
& \cong \Ext_{G(m+1)}^{s}(\Tmbar{i} \otimes_{A(m+1)} \Ext_{\Gamma(m+2)}^{t}(E_{m+1}^{1})) 
\label{CESS-for-E_{m+1}^{1}}
\end{align}
\setcounter{thm}{\value{equation}}%
with differentials 
$\tilde{d}_{r}: \tilde{E}_{r}^{s,t} \to \tilde{E}_{r}^{s+r,t-r+1}$.  
Since the case $s=t=0$ is not interesting, we may consider only for $s+t \ge 1$.  

For simplicity, 
we will hereafter omit the subscript in $\otimes_{A(m+1)}$, 
and we will denote $\Ext_{\Gamma(m+2)}^{t}(BP_{*})$ by $U_{m+1}^{t}$. 
Since $D_{m+1}^{0}$ is weak injective, we have isomorphisms 
$\Ext_{\Gamma(m+2)}^{t}(E_{m+1}^{1}) \cong U_{m+1}^{t+1}$
and 
\setcounter{equation}{\value{thm}}
\begin{equation}\label{cess-E2-t>0}
\tilde{E}_{2}^{s,t} \cong 
\CEE{s}{i}{U_{m+1}^{t+1}} 
\qquad
\mbox{for $t \ge 1$}.
\end{equation}
\setcounter{thm}{\value{equation}}%
Note that the structure of $U_{m+1}^{*}$ can be read from Theorem 4.5.  
This will be discussed again in \autoref{cor-E2again}.

\bigskip

To describe $\tilde{E}_{2}^{s,0}$, 
we need a resolution of 
$\overline{E}_{m+1}^{1} = \Ext_{\Gamma(m+2)}^{0}(E_{m+1}^{1})$.  
The obvious one is obtained by applying 
$\Ext_{\Gamma(m+2)}^{0}(-)$ to \eqref{ses-DE-2}. 
In practice, there is a ``smaller resolution''.

Now we recall some notations used in the version I.  
For a fixed positive integer $m$, 
we will set 
$\vv_{i} = v_{m+i}$ 
and 
$\wt_{i} = t_{m+i}$, 
and define
\[
\bbeta_{i/e_{1},e_{0}}
= \Frac{\vv_{2}^{i}}{p^{e_{0}}v_{1}^{e_{1}}}, 
\qquad
\bbeta_{i/e_{1}}
= \bbeta_{i/e_{1},1},  
\qquad
\bbeta_{i}
= \bbeta_{i/1}, 
\]
\[
\bbeta'_{i/e_{1}}
= \Frac{1}{i} \bbeta_{i/e_{1}}, 
\qquad
\bbeta'_{i}
= \bbeta'_{i/1},  
\qquad\mbox{and } 
\quad
\ggamma_{i}
= \Frac{\vv_{3}^{i}}{pv_{1}v_{2}}.
\]
Then we have

\begin{prop}\label{thm-B1}
Let $B_{m+1}$ be the $A(m+1)$-module 
generated by $\bbeta'_{i/i}$ for $i>0$.  
Then $B_{m+1}$ is a sub $G(m+1)$-comodule of $E_{m+1}^{1}/(v_{1}^{\infty })$
and it is invariant over $\Gamma(m+2)$.  
Its {\Ps} is
\[
g(B_{m+1}) = g_{m+1}(t) \sum_{k\ge0}
\frac{x^{p^{k+1}} (1-y^{p^{k}})}{(1-x^{p^{k+1}})(1-x_{2}^{p^{j}})}
\]

\noindent
where
$y=t^{|v_{1}|}$,
$x=t^{|\vv_{1}|}$,  
$x_{2} = t^{|\vv_{2}|}$ 
and
\[
g_{m+1}(t) = 
\prod_{i=1}^{m+1} \frac{1}{1-y_{i}}
\qquad
\mbox{where $y_{i}=t^{|v_{i}|}$.}
\]
\end{prop}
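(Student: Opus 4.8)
The plan is to establish the three assertions in the order that is logically cleanest: first the $\Gamma(m+2)$-invariance, then the sub $G(m+1)$-comodule property, and finally the \Ps, all as consequences of the explicit coaction inherited from the version I. The first input I would record is the reduced form of $\eta_{R}(\vv_{2})$ over $G(m+1)=A(m+1)[\wt_{1}]$: modulo the relevant invariant ideal its nonprimitive part is divisible both by $\vv_{1}=v_{m+1}\in A(m+1)$ and by $\wt_{1}$. Granting this, the $\Gamma(m+2)$-invariance is the easy half. Each generator $\bbeta'_{i/i}=\vv_{2}^{i}/(i\,p\,v_{1}^{i})$ involves only $v_{1}$ and $\vv_{2}$, and the normalization by $1/i$ together with the division by $v_{1}^{i}$ is precisely what renders the class primitive over $\Gamma(m+2)$; hence $\bbeta'_{i/i}\in\Ext_{\Gamma(m+2)}^{0}(E_{m+1}^{1}/(v_{1}^{\infty}))$, and invariance is inherited from the ambient module already analyzed in \cite{NR:GeneralizedImageJ}. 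Since every element of $A(m+1)$ is a $\Gamma(m+2)$-invariant, all of $B_{m+1}$ is invariant, and the remaining work concerns only the $G(m+1)$-comodule structure.

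To verify the sub-comodule property I would apply the $G(m+1)$-coaction to a generator and expand $\eta_{R}(\vv_{2})^{i}/(i\,p\,v_{1}^{i})$ by the binomial theorem. Because the nonprimitive part of $\eta_{R}(\vv_{2})$ is divisible by $\vv_{1}$ and by $\wt_{1}$, the $\ell$-th term carries a factor $\binom{i}{\ell}\vv_{1}^{\ell}$ and a power of $\wt_{1}$, and the task is to rewrite $\tfrac{1}{i\,p}\binom{i}{\ell}\vv_{2}^{\,i-\ell}\vv_{1}^{\ell}/v_{1}^{i}$ as an $A(m+1)[\wt_{1}]$-multiple of some $\bbeta'_{i'/i'}$ with $i'\le i$. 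The decisive point is the matching of $p$-adic valuations: the valuation $\nu_{p}(\tfrac{1}{i}\binom{i}{\ell})$, the powers of $v_{1}$ freed from the denominator $v_{1}^{i}$, and the accompanying factors of $\vv_{1}$ must conspire to keep every term inside $B_{m+1}\otimes G(m+1)$. This is exactly why the normalized $\bbeta'$, rather than $\bbeta$, is the correct generator, and I expect this valuation bookkeeping to be the principal obstacle of the proof.

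For the \Ps\ I would exhibit an explicit additive $\ints_{(p)}$-basis of $B_{m+1}$ and organize it by $k=\nu_{p}(i)$, the $p$-adic valuation of the $\vv_{2}$-exponent of the generating class. For each fixed $k$ the classical $v_{1}$-divisibility of the $\vv_{2}$-power classes (again from \cite{NR:GeneralizedImageJ}) caps the $v_{1}$-denominator tower at height $p^{k}$, which contributes the factor $1-y^{p^{k}}$ against the free factor $\tfrac{1}{1-y}$ of $g_{m+1}(t)$; the admissible $\vv_{2}$-exponents run through the multiples of $p^{k}$, giving $\tfrac{1}{1-x_{2}^{p^{k}}}$; and free multiplication by $v_{2},\dots,v_{m+1}$ supplies the remaining factors of $g_{m+1}(t)$ together with the $\vv_{1}$-contribution $\tfrac{x^{p^{k+1}}}{1-x^{p^{k+1}}}$. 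Summing over $k\ge 0$ and factoring out $g_{m+1}(t)$ then yields the stated series. The only remaining verification is that the proposed set really is a basis, \ie that there are no hidden relations among the $A(m+1)$-multiples of distinct generators and that the tower heights are exactly $p^{k}$; this is routine once the module structure from the first two steps is in hand.
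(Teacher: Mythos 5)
There is a genuine gap, and it sits exactly at the one point the paper's own proof is devoted to. Your plan establishes (modulo deferred bookkeeping) that $B_{m+1}$ is a $G(m+1)$-comodule closed under the coaction, that it is $\Gamma(m+2)$-invariant, and that it has the stated {\Ps} --- but all of this takes place inside $N^{2}=BP_{*}/(p^{\infty},v_{1}^{\infty})$. You never verify that the generators $\bbeta'_{i/i}=\vv_{2}^{i}/(ipv_{1}^{i})$ actually lie in the subcomodule $E_{m+1}^{1}/(v_{1}^{\infty})\subset N^{2}$, which is a membership statement, not an invariance statement. Your inference ``the normalization renders the class primitive over $\Gamma(m+2)$, hence $\bbeta'_{i/i}\in\Ext_{\Gamma(m+2)}^{0}(E_{m+1}^{1}/(v_{1}^{\infty}))$'' conflates the two: primitivity of an element of $N^{2}$ says nothing about which subobject of $N^{2}$ it belongs to, and likewise closure of $B_{m+1}$ under the coaction cannot place $B_{m+1}$ inside $E_{m+1}^{1}/(v_{1}^{\infty})$. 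For $m>0$ this is a real issue, since $E_{m+1}^{1}/(v_{1}^{\infty})$ is a proper subobject of $N^{2}$ (only for $m=0$, $p>2$ do the two coincide, as the paper notes in a footnote); indeed the same subtlety is why $\bbeta_{p/1,p+1}\not\in E_{m+1}^{1}/(v_{1}^{\infty})$ in \autoref{thm-E2extended}, so membership genuinely can fail for natural-looking fractions.

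The paper handles this with a specific criterion and an explicit factorization, and cites everything else to \cite[Theorem 2.4]{NR:OnthePrimitiveBeta}: an element of $N^{2}$ lies in $E_{m+1}^{1}/(v_{1}^{\infty})$ if and only if its image in $(M^{0}/D_{m+1}^{0})/(v_{1}^{\infty})$ vanishes (this follows from the commutative diagram of chromatic short exact sequences relating $E_{m+1}^{1}$, $N^{1}$ and $M^{0}/D_{m+1}^{0}$). One then writes $\vv_{2}=p(\llambda_{2}+\lambda_{1}w)$ with $w=(1-p^{p-1})\llambda_{1}^{p}-v_{1}^{p^{m+1}-1}\lambda_{1}\in D_{m+1}^{0}$, so that
\[
\bbeta'_{i/i}
= \frac{p^{i-1}(\llambda_{2}+\lambda_{1}w)^{i}}{i\,v_{1}^{i}},
\]
and since $p^{i-1}/i$ is a $p$-adic integer the numerator lies in $D_{m+1}^{0}$; hence the image in $(M^{0}/D_{m+1}^{0})/(v_{1}^{\infty})$ is trivial and membership follows. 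Your proposal contains no analogue of this step, and without it the conclusion ``$B_{m+1}$ is a sub $G(m+1)$-comodule of $E_{m+1}^{1}/(v_{1}^{\infty})$'' is not reached. Separately, note that your steps (closure under the coaction, the basis count for the {\Ps}) amount to re-proving \cite[Theorem 2.4]{NR:OnthePrimitiveBeta}, which the paper simply cites; that is legitimate in principle, but those computations are exactly the ones you defer as ``valuation bookkeeping'' and ``routine,'' so they too remain to be done.
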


\begin{proof}
This is \cite[Theorem 2.4]{NR:OnthePrimitiveBeta}.  
To clarify that $\bbeta'_{i/i}$ are in $E_{m+1}^{1}/(v_{1}^{\infty})$, 
note that an element in $N^{2}$ lies in
$E_{m+1}^{1}/ (v_{1}^{\infty })$ if and only if it has trivial image
in $(M^{0}/D_{m+1}^{0})/(v_{1}^{\infty })$.
This can be shown by the following commutative diagram\footnote{
For $m=0$ and $p>2$, $E_{1}^{1}/(v_{1}^{\infty})$ is isomorphic to $N^{2}$.}. 
\[
\xymatrix{
0 \ar[r] & E_{m+1}^{1} \ar[r] \ar@{>->}[d] & v_{1}^{-1}E_{m+1}^{1} \ar[r] \ar@{>->}[d] & E_{m+1}^{1}/(v_{1}^{\infty}) \ar[r] \ar@{>->}[d] & 0 \\
0 \ar[r] & N^{1} \ar[r] \ar@{->>}[d] & M^{1} \ar[r] \ar@{->>}[d] & N^{2} \ar[r] \ar@{->>}[d] & 0  \\
0 \ar[r] & M^{0}/D_{m+1}^{0} \ar[r] & v_{1}^{-1}(M^{0}/D_{m+1}^{0}) \ar[r] & (M^{0}/D_{m+1}^{0})/(v_{1}^{\infty }) \ar[r] & 0
}
\]
where $M^{i}$ and $N^{i}$ are usual chromatic comodules.  
Define $w \in D_{m+1}^{0}$ by  
\setcounter{equation}{\value{thm}}
\begin{equation}\label{eq-w}
w= (1-p^{p-1})\llambda_{1}^{p} - v_{1}^{p^{m+1}-1}\lambda_{1}. 
\end{equation}
\setcounter{thm}{\value{equation}}%
Then we have 
$\vv_{2} = p (\llambda_{2} + \lambda_{1} w)$
and 
\[
\bbeta'_{i/i} 
= \frac{p^{i} (\llambda_{2} + \lambda_{1} w)^{i} }{ip v_{1}^{i}}
= \frac{p^{i-1} (\llambda_{2} + \lambda_{1} w)^{i} }{i v_{1}^{i}} 
\]
which is clearly in $(M^{0}/D_{m+1}^{0})/(v_{1}^{\infty })$ as desired. 
\end{proof}

Let $W_{m+1}$ be the $G(m+1)$-comodule\footnote{
For $m=0$ and $p>2$, 
we may simply set 
$W_{1}= \Ext_{\Gamma(2)}^{0}(D_{1}^{1})$ 
(cf.~\cite[(7.2.17)]{Rav:MU2nd}), 
since the map 
$E_{1}^{1} \to D_{1}^{1}$ 
induces an isomorphism in $\Ext_{\Gamma(1)}^{0}$. 
}
defined by the induced extension
in the following commutative diagram
(cf.~\cite[(1.4)]{NR:OnthePrimitiveBeta}):
\[
\xymatrix{
0 \ar[r] & \overline{E}_{m+1}^{1} \ar[r]^{\iota} \ar@{=}[d] & 
W_{m+1} \ar[r]^{\rho} \ar@{^{(}->}[d] & 
B_{m+1} \ar[r] \ar@{^{(}->}[d] & 0 \\
0 \ar[r] & \overline{E}_{m+1}^{1} \ar[r] & 
v_{1}^{-1} \overline{E}_{m+1}^{1} \ar[r] & 
\overline{E}_{m+1}^{1}/(v_{1}^{\infty}) \ar[r] & 0
}
\]
In fact, we can describe $W_{m+1}$ explcitly.  Recall that
\begin{align*}
\Ext_{\Gamma(m+2)}^{1}(BP_{*})
& \cong A(m+1)
\left\{
\Frac{\vv_{2}^{i}}{ip}
\mid i>0
\right\}.
\end{align*}
Applying $\Ext_{\Gamma(m+2)}$ to \eqref{ses-DE-1} we have the {\SES} 
\[
0 \longrightarrow 
A(m)[\llambda_{1}]/A(m+1) \longrightarrow 
\overline{E}_{m+1}^{1} \stackrel{\delta}{\longrightarrow} 
U_{m+1}^{1}
\longrightarrow
0. 
\]
Then, a lift of 
$\vv_{2}^{i}/ip \in U_{m+1}^{1}$
to $\overline{E}_{m+1}^{1}$ is given by 
\[
b_{i}= 
\Frac{\vv_{2}^{i}-(v_{1}w)^{i}}{ip}
\qquad
\mbox{where $w$ is as in \eqref{eq-w}}.
\]
and a lift of 
$\bbeta '_{i/i} \in B_{m+1}$ 
to $W_{m+1}$ is given by 
\[
v_{1}^{-i} b_{i} = 
\sum_{0<j \le i}
\binom{i-1}{j-1}
\frac{(pv_{1}^{-1}\llambda_{2})^{j}}{pj}
w^{i-j}. 
\] 
So, $W_{m+1}$
is the subcomodule of $M^{1}$ 
obtained by
adjoining 
$v_{1}^{-i} b_{i}$ $(i>0)$ 
to $\overline{E}_{m+1}^{1}$.

\bigskip

The following properties of $W_{m+1}$ can be read from \cite[Theorem 2.4]{NR:OnthePrimitiveBeta}.

\begin{prop}\label{prop-W-weakinj}
$W_{m+1}$ is weak injective and 
the map 
$\iota: \overline{E}_{m+1}^{1} \to W_{m+1}$
induces an isomorphism in $\Ext^{0}$: 
we have 
$\Ext_{G(m+1)}^{0}(W_{m+1}) \cong U_{m+1}^{1}$. 
\end{prop}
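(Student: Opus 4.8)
The plan is to prove the two assertions in tandem, playing against each other the two short exact sequences that come out of the square defining $W_{m+1}$. Recall that a $G(m+1)$-comodule is weak injective exactly when all its positive-degree Ext groups vanish, so there are two things to establish: that $\Ext_{G(m+1)}^{s}(W_{m+1})=0$ for $s>0$, and that $\iota$ identifies $\Ext_{G(m+1)}^{0}(W_{m+1})$ with $U_{m+1}^{1}$. The top row of the defining diagram is
\[
0 \longrightarrow \overline{E}_{m+1}^{1} \stackrel{\iota}{\longrightarrow} W_{m+1} \stackrel{\rho}{\longrightarrow} B_{m+1} \longrightarrow 0,
\]
while the snake lemma applied to the whole square (whose left vertical map is the identity and whose other two are injections) identifies the cokernel of the middle vertical with that of the right one, producing
\[
0 \longrightarrow W_{m+1} \longrightarrow v_{1}^{-1}\overline{E}_{m+1}^{1} \longrightarrow Q \longrightarrow 0,
\qquad
Q := (\overline{E}_{m+1}^{1}/(v_{1}^{\infty}))/B_{m+1}.
\]
The second sequence, comparing $W_{m+1}$ with its $v_{1}$-localization, is adapted to the vanishing of higher Ext; the first is adapted to the $\Ext^{0}$ computation.

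For weak injectivity I would argue from the second sequence. The version-I fact that $v_{1}^{-1}E_{m+1}^{1}$ is weak injective over $\Gamma(m+1)$, transported by the same localization argument through the extension $(A(m+1),G(m+1)) \to (BP_{*},\Gamma(m+1)) \to (BP_{*},\Gamma(m+2))$ and its \CESS, shows that $v_{1}^{-1}\overline{E}_{m+1}^{1} = \Ext_{\Gamma(m+2)}^{0}(v_{1}^{-1}E_{m+1}^{1})$ is weak injective over $G(m+1)$ with $\Ext_{G(m+1)}^{0} \cong v_{1}^{-1}U_{m+1}^{1}$. Feeding this into the long exact sequence gives $\Ext_{G(m+1)}^{s}(W_{m+1}) \cong \Ext_{G(m+1)}^{s-1}(Q)$ for $s \ge 2$ and $\Ext_{G(m+1)}^{1}(W_{m+1}) = \coker\bigl(\Ext_{G(m+1)}^{0}(v_{1}^{-1}\overline{E}_{m+1}^{1}) \to \Ext_{G(m+1)}^{0}(Q)\bigr)$. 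Thus weak injectivity of $W_{m+1}$ reduces to showing that $Q$ is itself weak injective and that the map on $\Ext^{0}$ is surjective.

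For the $\Ext^{0}$ statement I would use the first sequence together with
\[
0 \longrightarrow A(m)[\llambda_{1}]/A(m+1) \longrightarrow \overline{E}_{m+1}^{1} \stackrel{\delta}{\longrightarrow} U_{m+1}^{1} \longrightarrow 0.
\]
Granting that $A(m)[\llambda_{1}]/A(m+1)$ has no nonzero $G(m+1)$-primitives, the explicit lifts $b_{i} = (\vv_{2}^{i}-(v_{1}w)^{i})/ip$ are primitive and map under $\delta$ onto the generators $\vv_{2}^{i}/ip$, so that $\Ext_{G(m+1)}^{0}(\overline{E}_{m+1}^{1}) \cong U_{m+1}^{1}$. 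To see that $\iota$ is an isomorphism on $\Ext^{0}$ it then suffices to check that the connecting map $\Ext_{G(m+1)}^{0}(B_{m+1}) \to \Ext_{G(m+1)}^{1}(\overline{E}_{m+1}^{1})$ is injective; concretely, the generators $v_{1}^{-i}b_{i}$ adjoined to form $W_{m+1}$ are not primitive, so a primitive of $B_{m+1}$ cannot lift to a primitive of $W_{m+1}$, forcing $\rho$ to vanish on $\Ext^{0}$ and hence $\Ext_{G(m+1)}^{0}(W_{m+1}) \cong \Ext_{G(m+1)}^{0}(\overline{E}_{m+1}^{1}) \cong U_{m+1}^{1}$.

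The main obstacle is the analysis of $Q$: proving $\Ext_{G(m+1)}^{\ge 1}(Q)=0$ together with the surjectivity onto $\Ext_{G(m+1)}^{0}(Q)$. This is precisely the assertion that $B_{m+1}$ is chosen large enough, namely that its generators $\bbeta'_{i/i}$ — whose \Ps\ is recorded in \autoref{thm-B1} — account exactly for the invariant $v_{1}$-torsion of $\overline{E}_{m+1}^{1}/(v_{1}^{\infty})$, so that the quotient carries no primitives in positive cohomological degree. The technical heart is the binomial expansion of $v_{1}^{-i}b_{i}$ displayed before the proposition, which exposes the $v_{1}$-Bockstein structure of the quotient; verifying that this expansion leaves no unexpected primitives in $Q$ is the step I expect to demand the most care, and it is where the \Ps\ bookkeeping of \autoref{thm-B1} does the real work.
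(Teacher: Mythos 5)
The first thing to note is that the paper itself contains no proof of this proposition: it is introduced with ``The following properties of $W_{m+1}$ can be read from \cite[Theorem 2.4]{NR:OnthePrimitiveBeta}.'' So your attempt has to be judged as a self-contained argument, and as such it has two genuine gaps. The structural one is that your treatment of weak injectivity reduces everything, via the (correct) snake-lemma sequence $0 \to W_{m+1} \to v_{1}^{-1}\overline{E}_{m+1}^{1} \to Q \to 0$, to the unproven claims that $Q$ is weak injective and that $\Ext_{G(m+1)}^{0}(v_{1}^{-1}\overline{E}_{m+1}^{1}) \to \Ext_{G(m+1)}^{0}(Q)$ is surjective. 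But these claims \emph{are} the theorem: they are exactly the assertion that $B_{m+1}$, with the {\Ps} of \autoref{thm-B1}, accounts for precisely the invariant $v_{1}$-torsion of $\overline{E}_{m+1}^{1}/(v_{1}^{\infty})$ that must be adjoined to kill all higher Ext. That counting argument is the content of \cite[Theorem 2.4]{NR:OnthePrimitiveBeta}, so what you have written is a (reasonable) reduction to the cited theorem, not a proof of it. A smaller issue in the same step: transporting weak injectivity of $v_{1}^{-1}E_{m+1}^{1}$ from $\Gamma(m+1)$ to $G(m+1)$ through the {\CESS} also requires knowing that $\Ext_{\Gamma(m+2)}^{t}(v_{1}^{-1}E_{m+1}^{1}) \cong v_{1}^{-1}U_{m+1}^{t+1}$ vanishes for $t \ge 1$, i.e.\ that $U_{m+1}^{\ge 2}$ is $v_{1}$-torsion; this is true but is itself an input you must quote.

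The second gap is a step that is actually false. You ``grant'' that $A(m)[\llambda_{1}]/A(m+1)$ has no nonzero $G(m+1)$-primitives, but $\llambda_{1}$ itself is one: $\psi(\llambda_{1}) = 1 \otimes \llambda_{1} + \wt_{1} \otimes 1$, and the term $\wt_{1} \otimes 1$ dies in the quotient because $1 \in A(m+1)$. This comodule is exactly the generalized image-$J$ comodule of \cite{NR:GeneralizedImageJ}, and its $\Ext^{0}$ is far from trivial; correspondingly $\Ext_{G(m+1)}^{0}(\overline{E}_{m+1}^{1}) = \Ext_{\Gamma(m+1)}^{0}(E_{m+1}^{1}) \cong \Ext_{\Gamma(m+1)}^{1}(BP_{*})$ contains classes such as $\delta(\llambda_{1}) = h_{m+1,0}$ in internal degree $|\vv_{1}|$, below the bottom degree of $U_{m+1}^{1}$, and the same image-$J$ summand survives $v_{1}$-localization (it is not $v_{1}$-torsion for $m>0$). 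So neither your identification $\Ext_{G(m+1)}^{0}(\overline{E}_{m+1}^{1}) \cong U_{m+1}^{1}$ nor your earlier claim $\Ext_{G(m+1)}^{0}(v_{1}^{-1}\overline{E}_{m+1}^{1}) \cong v_{1}^{-1}U_{m+1}^{1}$ can be derived the way you derive them; the whole delicacy of the cited computation lies in handling this summand, which your argument simply wishes away. Finally, your argument that $\rho_{*}$ vanishes on $\Ext^{0}$ is logically incomplete even granting the rest: showing that the particular lifts $v_{1}^{-i}b_{i}$ are not primitive does not show that no element of the coset $v_{1}^{-i}b_{i} + \overline{E}_{m+1}^{1}$ is primitive, and it is the latter that is needed to conclude that a primitive of $B_{m+1}$ has no primitive lift to $W_{m+1}$.
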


Now we have a 3-term resolution of $\overline{E}_{m+1}^{1}$
\[
0 \longrightarrow 
\overline{E}_{m+1}^{1} \stackrel{\iota}{\longrightarrow} 
W_{m+1} \stackrel{\rho}{\longrightarrow} 
B_{m+1} \longrightarrow 0. 
\]
Let $C^{*,s}$ denote the cochain complex obtained by applying 
$\Ext_{G(m+1)}^{s}(\Tm{j}\otimes -)$ 
to the sequence 
\[
\xymatrix{
\overline{D}_{m+1}^{0} \ar[r]^{\iota \circ (j_{1})_{*}} & W_{m+1} \ar[r]^{\rho} & B_{m+1}
}
\]
and let $H^{*,s}(C)$ be the associated cohomology group.  
Then we have 

\begin{prop}\label{prop-H(C)}
For $n=0$ and $1$, $H^{n,0}(C)$ is isomorphic to 
the Adams-Novikov $E_{2}$-term 
$\Ext_{\Gamma(m+1)}^{n}(\Tm{j})$. 
\end{prop}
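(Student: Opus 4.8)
The plan is to identify the cohomology $H^{n,0}(C)$ of the two-step complex obtained by applying $\Ext_{G(m+1)}^{0}(\Tm{j} \otimes -)$ to $\overline{D}_{m+1}^{0} \to W_{m+1} \to B_{m+1}$, and match it against the values of $\Ext_{\Gamma(m+1)}^{n}(\Tm{j})$ for $n=0,1$ recorded in \autoref{prop-ANSS-Tmi}, namely $\ker d_1$ and $\coker d_1$. First I would unwind the definition of the complex $C^{*,0}$: its terms are $\Ext_{G(m+1)}^{0}(\Tmbar{j} \otimes \overline{D}_{m+1}^{0})$, $\Ext_{G(m+1)}^{0}(\Tmbar{j} \otimes W_{m+1})$, and $\Ext_{G(m+1)}^{0}(\Tmbar{j} \otimes B_{m+1})$, with coboundaries induced by $\iota \circ (j_1)_*$ and $\rho$. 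Here $H^{0,0}(C)$ is the kernel of the first map and $H^{1,0}(C)$ is the homology at the middle term.

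The key computational input is \autoref{prop-W-weakinj}, which tells us $W_{m+1}$ is weak injective with $\Ext_{G(m+1)}^{0}(W_{m+1}) \cong U_{m+1}^{1}$. Combined with the fact that $D_{m+1}^{0}$ is weak injective, applying $\Ext_{G(m+1)}^{0}(-)$ to the short exact sequence $0 \to \overline{E}_{m+1}^{1} \to W_{m+1} \to B_{m+1} \to 0$ should give me, by weak injectivity of $W_{m+1}$, that the complex $C^{*,0}$ computes $\Ext_{\Gamma(m+1)}^{*}(\Tm{j} \otimes E_{m+1}^{1})$ in low cohomological degree via the resolution $0 \to \overline{E}_{m+1}^{1} \to W_{m+1} \to B_{m+1} \to 0$. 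The heart of the argument is then to compare the differential $d_1: E_1^{0,0} \to E_1^{1,0}$ of \eqref{rss-E1}, which is induced by $j_1$, with the map $\iota \circ (j_1)_*$ appearing in the complex. Since $\iota$ induces an isomorphism on $\Ext^0$ (again \autoref{prop-W-weakinj}), precomposing $(j_1)_*$ with $\iota$ does not change the kernel or alter the relevant cokernel structure up to the identifications already in place.

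Concretely, I would argue that $H^{0,0}(C) = \ker(\iota \circ (j_1)_*) = \ker((j_1)_*) = \ker d_1$, using that $\iota$ is injective on $\Ext^0$, and that by \autoref{prop-ANSS-Tmi} this equals $\Ext_{\Gamma(m+1)}^{0}(\Tm{j})$. For $n=1$, I would compute $H^{1,0}(C) = \ker \rho_* / \im(\iota \circ (j_1)_*)$, identify $\ker \rho_* = \im \iota_* \cong \Ext_{G(m+1)}^{0}(\overline{E}_{m+1}^{1})$ via exactness of the resolution on $\Ext^0$, and then recognize the quotient as $\coker d_1$, which is $\Ext_{\Gamma(m+1)}^{1}(\Tm{j})$ by \autoref{prop-ANSS-Tmi}.

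The main obstacle I anticipate is verifying that $\ker \rho_* = \im \iota_*$ at the level of $\Ext_{G(m+1)}^{0}$, i.e.\ that the resolution $0 \to \overline{E}_{m+1}^{1} \to W_{m+1} \to B_{m+1} \to 0$ stays exact after applying $\Ext_{G(m+1)}^{0}(\Tm{j} \otimes -)$ at the middle spot. This requires controlling the connecting homomorphism into $\Ext_{G(m+1)}^{1}(\Tmbar{j} \otimes \overline{E}_{m+1}^{1})$, and it is precisely the weak injectivity of $W_{m+1}$ together with the $\Ext^0$-isomorphism property of $\iota$ that should make this obstruction vanish in the relevant range. I would isolate this exactness claim as the crux and reduce everything else to the bookkeeping comparison of $\iota \circ (j_1)_*$ with the spectral-sequence differential $d_1$.
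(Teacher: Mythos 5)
Your proposal is correct and follows essentially the same route as the paper's proof: unwind $C^{*,0}$, use the exact sequence obtained by applying $\Ext_{G(m+1)}^{0}(\Tmbar{j}\otimes -)$ to $0 \to \overline{E}_{m+1}^{1} \to W_{m+1} \to B_{m+1} \to 0$, identify $(j_{1})_{*}$ with the differential $d_{1}$ of \eqref{rss-E1}, and conclude from \autoref{prop-ANSS-Tmi} that $H^{0,0}(C)=\ker d_{1}$ and $H^{1,0}(C)=\ker\rho_{*}/\im(j_{1})_{*}=\coker d_{1}$. One remark: the step you single out as the crux, $\ker \rho_{*} = \im \iota_{*}$, requires neither weak injectivity of $W_{m+1}$ nor any control of a connecting homomorphism --- it is automatic from left exactness of $\Ext_{G(m+1)}^{0}$ applied to the short exact sequence (since $\Tmbar{j}\otimes\iota$ remains injective); weak injectivity of $\Tmbar{j}\otimes W_{m+1}$ is what kills $\Ext_{G(m+1)}^{s}(\Tmbar{j}\otimes W_{m+1})$ for $s \ge 1$, and that is needed only to identify $\coker\rho_{*}$ and the higher groups $\tilde{E}_{2}^{s,0}$ in \autoref{E2term-CESS}, not for the cases $n=0,1$ treated here.
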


\begin{proof}
Since $W_{m+1}$ is weak injective over $G(m+1)$, 
$\Tmbar{i} \otimes W_{m+1}$ is also weak injective by 
Lemma 1.14 
and $C^{1,s}=0$ for $s \ge 1$.  
We have the commutative diagram
\[
\xymatrix{
 & C^{0,0} \ar[r] \ar[d]^-{(j_{1})_{*}} & C^{1,0} \ar[r] \ar@{=}[d] & C^{2,0} \ar@{=}[d]    \\
0 \ar[r] & \tilde{E}_{2}^{0,0} \ar[r]^{\iota_{*}} & C^{1,0} \ar[r]^{\rho_{*}} & C^{2,0} \ar[r] & \tilde{E}_{2}^{1,0} \ar[r] & 0 
\quad
\mbox{(exact)}
}
\]
and isomorphisms
$C^{2,s-1} \cong \tilde{E}_{2}^{s,0}$ for $s \ge 2$.  
The map $(j_{1})_{*}$ coincides with the differential 
$d_{1}: E_{1}^{0,0} \to E_{1}^{1,0}$
of the {\RSS} of \eqref{rss-E1}, 
so we have 
\begin{align*}
H^{0,0}(C) 
& = \ker (j_{1})_{*} 
= \ker d_{1}, \\
H^{1,0}(C) 
& = \ker \rho_{*}/\im (j_{1})_{*}
\cong \tilde{E}_{2}^{0,0}/\im (j_{1})_{*}
= \coker d_{1}.
\qedhere
\end{align*}
\end{proof}

The structure of $H^{n,0}(C)$ for $n=0, 1$ was determined 
in \cite{NR:GeneralizedImageJ}.  
We can also read the following result from the above proof.

\begin{prop}\label{E2term-CESS}
\noindent
For the {\CESS} of \eqref{CESS-for-E_{m+1}^{1}} we have 
\[
\tilde{E}_{2}^{s,0} \cong 
\begin{cases}
\ker \rho_{*} 
       &\mbox{for $s=0$},    \\
\coker \rho_{*} 
\quad
(=H^{2,0}(C))
       &\mbox{for $s=1$},    \\
\CEE{s-1}{j}{B_{m+1}}
       &\mbox{for $s \ge 2$}.
\end{cases}
\]
\end{prop}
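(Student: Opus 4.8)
The plan is to extract all three cases directly from the long exact sequence already assembled in the proof of \autoref{prop-H(C)}, now reading off the terms in cohomological filtration $s \geq 2$ as well as $s = 0, 1$. Setting $t = 0$ in \eqref{CESS-for-E_{m+1}^{1}} and using $\Ext_{\Gamma(m+2)}^{0}(E_{m+1}^{1}) = \overline{E}_{m+1}^{1}$, the bottom line of the spectral sequence is $\tilde{E}_{2}^{s,0} = \Ext_{G(m+1)}^{s}(\Tmbar{j} \otimes \overline{E}_{m+1}^{1})$, so everything reduces to computing these $\Ext$-groups.

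First I would apply $\Ext_{G(m+1)}^{*}(\Tmbar{j} \otimes -)$ to the $3$-term resolution
\[
0 \longrightarrow
\overline{E}_{m+1}^{1} \stackrel{\iota}{\longrightarrow}
W_{m+1} \stackrel{\rho}{\longrightarrow}
B_{m+1} \longrightarrow 0.
\]
Because $\Tmbar{j} = A(m+1)\{t_{m+1}^{\ell} \mid 0 \le \ell < p^{j}\}$ is a free, hence flat, $A(m+1)$-module, tensoring preserves exactness; this remains a short exact sequence of $G(m+1)$-comodules, and applying $\Ext_{G(m+1)}^{*}$ yields a long exact sequence whose middle and right-hand entries are $C^{1,*}$ and $C^{2,*}$ and whose left-hand entries are the groups $\tilde{E}_{2}^{s,0}$ we wish to compute.

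The decisive input is that $\Tmbar{j} \otimes W_{m+1}$ is weak injective, which follows from \autoref{prop-W-weakinj} together with Lemma~1.14. This forces $\Ext_{G(m+1)}^{s}(\Tmbar{j} \otimes W_{m+1}) = C^{1,s} = 0$ for every $s \geq 1$, so the middle column of the long exact sequence vanishes above degree $0$. The sequence therefore breaks into the four-term piece
\[
0 \longrightarrow
\tilde{E}_{2}^{0,0} \stackrel{\iota_{*}}{\longrightarrow}
C^{1,0} \stackrel{\rho_{*}}{\longrightarrow}
C^{2,0} \longrightarrow
\tilde{E}_{2}^{1,0} \longrightarrow 0
\]
appearing in the proof of \autoref{prop-H(C)}, together with connecting isomorphisms $\tilde{E}_{2}^{s,0} \cong C^{2,s-1} = \CEE{s-1}{j}{B_{m+1}}$ for $s \geq 2$. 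Reading off the four-term sequence gives $\tilde{E}_{2}^{0,0} \cong \ker \rho_{*}$ via the injection $\iota_{*}$, and $\tilde{E}_{2}^{1,0} \cong \coker \rho_{*}$, which settles the cases $s = 0, 1$.

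No genuine obstacle is expected, since all the weight is carried by the weak injectivity of $W_{m+1}$ and the flatness of $\Tmbar{j}$, both already in hand from the preceding results. The one point deserving a line of care is the identification $\coker \rho_{*} = H^{2,0}(C)$ asserted in the statement: this is immediate once one notes that the map $C^{1,0} \to C^{2,0}$ in the cochain complex $C$ is induced by $\rho \colon W_{m+1} \to B_{m+1}$, hence is $\rho_{*}$, so that $H^{2,0}(C) = C^{2,0}/\im \rho_{*} = \coker \rho_{*}$.
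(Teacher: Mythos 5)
Your proposal is correct and takes essentially the same route as the paper: the paper obtains this proposition by ``reading it off'' from the proof of \autoref{prop-H(C)}, i.e.\ from the vanishing $C^{1,s}=0$ for $s\ge 1$ (weak injectivity of $\Tmbar{j}\otimes W_{m+1}$ via \autoref{prop-W-weakinj} and Lemma~1.14), the resulting four-term exact sequence $0 \to \tilde{E}_{2}^{0,0} \to C^{1,0} \xrightarrow{\rho_{*}} C^{2,0} \to \tilde{E}_{2}^{1,0} \to 0$, and the connecting isomorphisms $C^{2,s-1}\cong\tilde{E}_{2}^{s,0}$ for $s\ge 2$. Your write-up merely makes explicit the same ingredients (including the flatness of $\Tmbar{j}$ and the identification of the differential in $C^{*,0}$ with $\rho_{*}$), so there is no substantive difference.
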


Combining this with \eqref{cess-E2-t>0}, 
we have the chart of Cartan-Eilenberg $E_{2}$-terms as in \autoref{eq-CESSpicture}.

\begin{table}[h]
\caption[]{The Cartan-Eilenberg $E_{2}$-term of \eqref{CESS-for-E_{m+1}^{1}}. Here all Ext groups are over $G(m+1)$.}\label{eq-CESSpicture}
\begin{tabular}[]{|c|cccc|} \hline
  &$\vdots $
    &$\vdots $
      &$\vdots $&\\
& & & & \\
$t=2$
  &$\Ext_{}^{0}(\Tmbar{j}\otimes U_{m+1}^{3})$        
    &$\Ext_{}^{1}(\Tmbar{j}\otimes U_{m+1}^{3})$        
      &$\Ext_{}^{2}(\Tmbar{j}\otimes U_{m+1}^{3})$        
        &$\cdots $\\
& & & & \\
$t=1$
  &$\Ext_{}^{0}(\Tmbar{j}\otimes U_{m+1}^{2})$        
    &$\Ext_{}^{1}(\Tmbar{j}\otimes U_{m+1}^{2})$        
      &$\Ext_{}^{2}(\Tmbar{j}\otimes U_{m+1}^{2})$        
        &$\cdots $\\
& & & & \\
$t=0$
  &$\ker \rho_{*}$        
    &$\coker \rho_{*}$
      &$\Ext_{}^{1}(\Tmbar{j}\otimes B_{m+1})$
        &$\cdots $\\
& & & & \\
\hline 
  &$s=0$ 
    &$s=1$ 
      &$s=2$ & \\ \hline
\end{tabular}
\end{table}

\bigskip

Note that the case $s=t=0$ is not interesting here, as we stated before.  
For $\coker \rho_{*}$, we need to recall some results from the other papers.  
For a ${G(m+1)}$-comodule $M$, 
denote the subgroup 
$\bigcap_{n \ge p^{j}} \ker \wrr_{n}$
of $M$ by $L_{j}(M)$. 
Then, 
the map 
\[
(c \otimes 1) \psi: 
L_{j}(M) \longrightarrow \CEE{0}{j}{M}
\]
is an isomorphism between $A(m+1)$-modules
by Lemma 1.12.  
Thus, to obtain the structure of $\tilde{E}_{2}^{1,0}$, 
we may alternatively examine the map 
\[
\rho_{*}: L_{j}(W_{m+1}) \longrightarrow L_{j}(B_{m+1}). 
\]
The following can be read from \cite[Corollary 4.3]{NR:GeneralizedImageJ}.

\begin{lem}\label{lem-structure-of-cokerrho}
The $\coker \rho_{*}$ is isomorphic to the quotient 
\[
L_{j}(B_{m+1})
\left/
\left( A(m+1) \big\{ \bbeta'_{i/i} \mid 0<i\le p^{j-1} \big\} \right)
\right. . 
\]
\end{lem}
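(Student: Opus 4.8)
The plan is to compute the image of $\rho_*$ and show that it is exactly the submodule generated by the small-index classes $\bbeta'_{i/i}$ with $0<i\le p^{j-1}$; since $\coker\rho_*=L_{j}(B_{m+1})/\im\rho_*$, this gives the assertion. First I would set up the reduction. By Lemma 1.12 the map $(c\otimes1)\psi$ identifies $L_{j}(W_{m+1})$ with $\CEE{0}{j}{W_{m+1}}=\tilde E_{2}^{0,0}$ and $L_{j}(B_{m+1})$ with $\CEE{0}{j}{B_{m+1}}$, and under these identifications $\rho_*$ is induced by $\rho$. Since $B_{m+1}=A(m+1)\{\bbeta'_{i/i}\mid i>0\}$ by \autoref{thm-B1}, the target is spanned over $A(m+1)$ by the $\bbeta'_{i/i}$, so the whole problem becomes deciding, for each $i$, whether $\bbeta'_{i/i}$ lies in $\im\rho_*$.

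Next I would use the explicit lifts. The class $\bbeta'_{i/i}$ has the preimage $v_{1}^{-i}b_{i}\in W_{m+1}$ written down before \autoref{prop-W-weakinj}, and $\ker\rho=\iota(\overline{E}_{m+1}^{1})$. Hence $\bbeta'_{i/i}\in\im\rho_*$ precisely when $v_{1}^{-i}b_{i}$, after adding a suitable correction from $\iota(\overline{E}_{m+1}^{1})$, can be made to lie in $L_{j}(W_{m+1})$, i.e. to be killed by $\wrr_{n}$ for all $n\ge p^{j}$. (As a consistency check one can note that, since $W_{m+1}$ is weak injective, $\Tmbar{j}\otimes W_{m+1}$ is weak injective by Lemma 1.14, so the connecting homomorphism of $0\to\overline{E}_{m+1}^{1}\to W_{m+1}\to B_{m+1}\to0$ is onto and $\coker\rho_*\cong\CEE{1}{j}{\overline{E}_{m+1}^{1}}$; but to obtain the stated generator-by-generator description I would compute $\im\rho_*$ directly.)

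The crux is the coaction computation. Using that $v_{1}$ is invariant over $G(m+1)$ (valid since $m\ge1$, as $\eta_{R}(v_{1})=v_{1}+pt_{1}$ and $t_{1}=0$), so that $v_{1}^{-i}$ pulls out of $\wrr_{n}$, together with the reduced right-unit formula $\eta_{R}(\vv_{2})\equiv\vv_{2}+\vv_{1}\wt_{1}^{p^{m+1}}-\vv_{1}^{p}\wt_{1}$, I would expand $\wrr_{n}(v_{1}^{-i}b_{i})$ for $n\ge p^{j}$ from the multinomial contributions of $(\vv_{2}+\vv_{1}\wt_{1}^{p^{m+1}}-\vv_{1}^{p}\wt_{1})^{i}$ and the binomial expansion of $v_{1}^{-i}b_{i}$ in the variables $pv_{1}^{-1}\llambda_{2}$ and $w$. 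A $p$-adic analysis of the resulting multinomial and binomial coefficients should show that all contributions of $\wt_{1}$-degree $\ge p^{j}$ either vanish or land in $\iota(\overline{E}_{m+1}^{1})$ exactly when $i\le p^{j-1}$, whereas for $i>p^{j-1}$ a genuinely nonzero obstruction survives. Pinning down this threshold $p^{j-1}$ is the heart of the matter and the step I expect to be the main obstacle; it is precisely the content extracted from \cite[Corollary 4.3]{NR:GeneralizedImageJ}, which I would invoke rather than redo the congruences in full.

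Finally, combining the two cases gives $\im\rho_*=A(m+1)\{\bbeta'_{i/i}\mid 0<i\le p^{j-1}\}$ (in particular this submodule is contained in $L_{j}(B_{m+1})$), and therefore $\coker\rho_*\cong L_{j}(B_{m+1})\big/\big(A(m+1)\{\bbeta'_{i/i}\mid 0<i\le p^{j-1}\}\big)$, as claimed.
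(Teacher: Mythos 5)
Your proposal is correct and matches the paper's own treatment: the paper proves this lemma by exactly your reduction --- using Lemma 1.12 to replace $\rho_{*}$ on $\Ext^{0}$-groups by $\rho_{*}\colon L_{j}(W_{m+1})\to L_{j}(B_{m+1})$ --- and then simply reading the answer off from \cite[Corollary 4.3]{NR:GeneralizedImageJ}, the same citation you invoke for the decisive step. Two cautions about your (explicitly non-essential) computational sketch: the right-unit formula should read $\eta_{R}(\vv_{2})\equiv\vv_{2}+v_{1}\wt_{1}^{p}-v_{1}^{p^{m+1}}\wt_{1}$ modulo $(p)$ --- your version, with $\vv_{1}$ and $\wt_{1}^{p^{m+1}}$, is the $m=0$ formula with the hats misplaced and is dimensionally inconsistent for $m>0$ (compare the proof of \autoref{thm-7.3.15}\eqref{thm-7.3.15-2}, where $\eta_{R}(\vv_{2})\equiv\vv_{2}+v_{1}\wt_{1}^{p}+p\wt_{2}$ mod $I^{p^{m+1}}$) --- and deciding for each $i$ whether $\bbeta'_{i/i}\in\im\rho_{*}$ would not by itself determine $\im\rho_{*}$ as an $A(m+1)$-submodule, since an element $e+\sum_{i}a_{i}v_{1}^{-i}b_{i}$ of $L_{j}(W_{m+1})$ need not have each summand separately in $L_{j}(W_{m+1})$; but as you defer precisely this identification to the cited corollary, just as the paper does, your proof stands.
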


The structure of $L_{j}(B_{m+1})$ is defermined in
\cite{NR:OnthePrimitiveBeta} for all $m$ and $j$.  
In particular, the following is the results for $j=2$.

\begin{lem}[{\cite[Theorem 6.1]{NR:OnthePrimitiveBeta}}]
\label{lem-structure-of-B}
Below dimension $p^{3}|\vv_{2}|$, 
$L_{2}(B_{m+1})$ is the $A(m+1)$-module generated by 
\[
\Big\{ \bbeta'_{i/t} \mid i \ge 1, 0<t \le \min (i,p) \Big\} 
\cup 
\Big\{ \bbeta_{ap^{2}+b/t} \mid p<t \le p^{2}, a>0 \mbox{ and } 0 \le b<p \Big\}. 
\]
In particular, below dimension $|\vv_{2}^{p^{2}+1}/v_{1}^{p^2}|$, the
comodule $B_{m+1}$ is $2$-free and $L_{2}(B_{m+1})$ is the
$A(m+1)$-module generated by 

\setcounter{equation}{\value{thm}}
\begin{equation}\label{B-for-j=2}
\Big\{ \bbeta'_{i/\min (i,p)} \mid i>0 \Big\} 
\cup 
\Big\{ \bbeta_{i/t} \mid p<t \le p^2 \le i < p^{2}+p \Big\}.
\end{equation}
\setcounter{thm}{\value{equation}}%
\end{lem}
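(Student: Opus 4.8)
The plan is to reduce everything to a coaction computation over $G(m+1)=A(m+1)[\wt_1]$ and then read off the kernel condition defining $L_2$. Since $B_{m+1}$ is invariant over $\Gamma(m+2)$ by \autoref{thm-B1}, its residual comodule structure is carried entirely by $\wt_1=t_{m+1}$, so $\wrr_n(x)$ is just the coefficient of $\wt_1^{\,n}$ in $\psi(x)$. First I would expand this coaction on the generators using the identity $\vv_2=p(\llambda_2+\lambda_1 w)$ together with the explicit lift
\[
v_1^{-i}b_i=\sum_{0<j\le i}\binom{i-1}{j-1}\frac{(pv_1^{-1}\llambda_2)^{j}}{pj}\,w^{i-j}
\]
recorded above, and the known coactions of $\llambda_1$ and $\llambda_2$ (each contributing definite powers of $\wt_1$). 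Organizing the expansion so that in every summand the power of $\wt_1$ and the power of $v_1$ are explicit functions of $i$, $t$ and the index $j$ is the bookkeeping heart of the argument; the binomial coefficients $\binom{i-1}{j-1}$ already visible in the lift formula are what will decide membership in $L_2$.

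Next I would convert the condition ``$\wrr_n(x)=0$ for all $n\ge p^2$'' into arithmetic. After clearing the $v_1$-division, the coefficient of each offending $\wt_1^{\,n}$ is a binomial coefficient times a power of $p$ inherited from the denominators $p^{e_0}v_1^{e_1}$, and it vanishes in $A(m+1)$ precisely when its $p$-adic valuation is large enough. Kummer's theorem turns $v_p\binom{i-1}{j-1}$ into a count of carries in the base-$p$ addition producing $i-1$, and it is this carry count, weighed against the $p$ supplied by the denominator, that separates the two families: the primitive generators $\bbeta'_{i/t}$ survive for $0<t\le\min(i,p)$, while for $p<t\le p^2$ the only survivors are the unprimed $\bbeta_{ap^2+b/t}$, whose indices $i\equiv b\pmod{p^2}$ with $0\le b<p$ are exactly those for which the carries line up. The passage from $t\le p$ to $t\le p^2$ mirrors the defining threshold $p^2$ of $L_2$.

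To see that these elements generate $L_2(B_{m+1})$, and are not merely contained in it, I would compare Poincar\'e series. The second step shows the proposed generators lie in $L_2(B_{m+1})$, so their $A(m+1)$-span is a submodule; computing the series of that span from \autoref{thm-B1} and the series of $L_2(B_{m+1})$ itself from the coaction analysis, and checking they agree degree by degree below $p^3|\vv_2|$, forces the two to coincide. In the smaller range below $|\vv_2^{p^2+1}/v_1^{p^2}|$ the list collapses to \eqref{B-for-j=2}: since $v_1\cdot\bbeta'_{i/t}=\bbeta'_{i/(t-1)}$, the lower-$t$ members of the first family are $v_1$-multiples of the top generator $\bbeta'_{i/\min(i,p)}$ and drop out of a minimal generating set, while the carry family is truncated by the dimension bound to $p^2\le i<p^2+p$. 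That the remaining generators carry no forced relations in this range is the assertion that $B_{m+1}$ is $2$-free.

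The hard part will be the arithmetic of the middle step, carried out with complete control of the $p$- and $v_1$-valuations so that the vanishing of $\wrr_n$ for $n\ge p^2$ is decided correctly in every boundary case. The delicate indices are those near a multiple of $p^2$, where $\binom{i-1}{j-1}$ is divisible by $p$ but not $p^2$: there the single $p$ gained from the binomial competes with the $p$ lost to the $v_1$-denominator, and this competition forces the precise cutoffs $t\le\min(i,p)$ versus $p<t\le p^2$ and the congruence $i\equiv b\pmod{p^2}$. Keeping the lower-order terms of the coactions of $\llambda_1$ and $\llambda_2$ (suppressed above) from contaminating these valuation counts is what makes the computation genuinely laborious rather than formal.
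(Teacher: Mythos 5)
The first thing to note is that the paper contains no proof of this lemma at all: it is imported verbatim from \cite[Theorem 6.1]{NR:OnthePrimitiveBeta}, so there is no internal argument to compare yours against, and your proposal has to stand on its own as a reproof of that external theorem. Its overall shape --- expand the $G(m+1)$-coaction on the $\bbeta$-elements, decide vanishing of the coefficients of $\wt_{1}^{n}$ for $n\ge p^{2}$ by valuation arithmetic, then settle generation by a Poincar\'e series count --- is indeed the methodology of the cited source and of the analogous in-paper argument (\autoref{Ext-R0}). But as written it has two genuine gaps. The first is your vanishing criterion. The coefficients of $\wt_{1}^{n}$ do not live in $A(m+1)$, and their vanishing is not decided ``precisely when the $p$-adic valuation is large enough'': in $B_{m+1}\subset E_{m+1}^{1}/(v_{1}^{\infty})$ a fraction $x/(p^{a}v_{1}^{b})$ dies if and only if $x\in(p^{a},v_{1}^{b})$, a mixed condition in which a term can die because its $v_{1}$-exponent has become non-negative even though the $p$ in the denominator survives. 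Concretely, the coefficient of $\wt_{1}^{p^{2}}$ in $\psi(\bbeta'_{p+1/p})$ is $\vv_{2}/p$ up to unit (take $(v_{1}\wt_{1}^{p})^{p}$ from $p$ of the $p+1$ factors of $\eta_{R}(\vv_{2})^{p+1}$); its $p$-valuation is negative, so your test declares $\bbeta'_{p+1/p}\notin L_{2}(B_{m+1})$, contradicting the lemma, whereas in fact this coefficient vanishes because $\vv_{2}/p=\llambda_{2}+\lambda_{1}w$ (with $w$ as in \eqref{eq-w}) lifts to $E_{m+1}^{1}$ --- a $v_{1}$-integrality reason, not a $p$-divisibility one. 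Exactly these mixed borderline cases decide the cutoffs $t\le\min(i,p)$ versus $p<t\le p^{2}$, so reducing the middle step to Kummer's theorem on $\binom{i-1}{j-1}$ discards half the criterion; and you defer that step as ``the hard part,'' which is to say the decisive content of the lemma is not actually proved.

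The second gap is that the generation step is circular as stated. You propose to compute ``the series of $L_{2}(B_{m+1})$ itself from the coaction analysis'' and compare it with the series of the candidate span, but $L_{2}$ is the unknown: the kernel of a family of operators is not in general spanned by the monomial basis elements it contains (two basis vectors with equal, nonzero images contribute a difference to the kernel), so knowing which monomials satisfy $\wrr_{n}(x)=0$ for $n \ge p^{2}$ does not yet give $g(L_{2}(B_{m+1}))$. The non-circular version of your step three --- which is what \autoref{Ext-R0} does for $U^{0}$ and what the cited proof does for $B_{m+1}$ --- is to show that $\wrr_{p^{2}}$ acts triangularly with unit leading coefficients along the diagonals of the configuration (as in \eqref{diagram-elements-R0}), so that the module decomposes into free $\wrr_{p^{2}}$-strings based at the candidate generators; this yields an identity of the form $g(B_{m+1})=g(\mbox{candidates})/(1-x^{p^{2}})$ in the stated range, and then Lemma 1.12 together with the $2$-freeness criterion identifies $L_{2}$ with the candidate span. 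Without that leading-term argument the Poincar\'e series comparison cannot get started, and the $2$-freeness claim in \eqref{B-for-j=2} (which is a statement about $\Tmbar{2}\otimes B_{m+1}$ being weak injective, not merely about the absence of relations among generators) is also left unaddressed.
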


\section{Extending the range of \texorpdfstring{$E_{m+1}^{2}$}{Em+12}}

In 
Theorem 4.5 
we determined the structure of 
$\Ext_{\Gamma(m+1)}^{*}(BP_{*})$ 
below dimension $p^{2}|\vv_{1}|$.  
Here we extend this range to $p|\vv_{2}|$.  
This is the dimension where the subcomodule $E_{m+1}^{2}$ of $E_{m+1}^{1}/(v_{1}^{\infty})$ 
starts to behave badly for $m>0$.  

\bigskip

By 
Lemma 4.2 
the Poincar\'{e} series of $E_{m+1}^{2}$ below dimension $p|\vv_{2}|$ is
\setcounter{equation}{\value{thm}}
\begin{align}
\lefteqn{g_{m+2} (t)\left(\frac{x^{p} (1-y)}{(1-x^{p}) (1-x_{2})}
+ \frac{x^{p^{2}} (1-y^{p+1})}{(1-x^{p^{2}}) (1-x_{3})}\right)}
\qquad\qquad \nonumber\\
& =
g (BP_{*}/I_{2})\frac{x^{p} }{(1-x^{p}) (1-x_{2})}
+
g (BP_{*}/I_{2})\Frac{x^{p^{2}}(1-y^{p+1})}{1-y}. 
\label{lem-E2series-again}
\end{align}
\setcounter{thm}{\value{equation}}%
The first term corresponds to the module described in 
Theorem 4.5, 
and the second term presumably corresponds to 
\[
BP_{*}/(p,v_{1})
\left\{
\bbeta_{p/j,p+2-j}
\mid 0<j\le p \right\}.
\]

\noindent
We see that 
\[
\bbeta_{p/j,p+2-j}
= \frac{\vv_{2}^{p}}{p^{p+2-j}v_{1}^{j}}
= \sum_{0 \le k<j} 
\binom{p}{k}\frac{p^{j-2-k}}{v_{1}^{j-k}}
\llambda_{2}^{p-k} w^{k}
\quad
\in E_{m+1}^{1}/(v_{1}^{\infty})
\]
(where $w$ is as in \eqref{eq-w}) for $j \ge 2$, 
but $\bbeta_{p/1,p+1} \not\in E_{m+1}^{1}/(v_{1}^{\infty})$.  
We get around this problem by replacing $\bbeta_{p/1,p+1}$ with 
\[
\tilde{\bbeta}_{p/1,p+1}=
\frac{\vv_{2}^{p}}{p^{p+1}v_{1}} - \frac{\vv_{3}}{pv_{1}^{2}}+
\frac{v_{2}\vv_{2}^{p}}{pv_{1}^{p+2}}
- \Frac{v_{2}^{p^{m+1}}\vv_{1}}{p^{2}v_{1}^{2}}
\quad
\in E_{m+1}^{1}/(v_{1}^{\infty}).
\]
Then, our extension of 
Theorem 4.5 for $m>0$ is the following.

\begin{thm}\label{thm-E2extended}
Let $E^{2}_{m+1}$ be the $A(m+2)$-module generated by the set 
\[
\left\{\bbeta_{i/j,k} \mid i+1 \ge j+k 
\right\} 
\cup 
\left\{\bbeta_{p/j,p+2-j} \mid 2 \le j \le p \right\}
\cup 
\left\{\tilde{\bbeta}_{p/1,p+1} \right\}.
\]
Below dimension $p|\vv_{2}|$, it has the {\Ps} specified in \eqref{lem-E2series-again}, 
it is a sub $\Gamma(m+1)$-comodule of $E^{1}_{m+1}/(v_{1}^{\infty})$, 
and its Ext group is isomorphic to 
\[
A(m+1)/I_{2}\otimes 
E(\hh_{1,0}) \otimes P(\bb_{1,0}) \otimes 
\left\{\bbeta'_{i},  \bbeta_{p/k}
\mid i \ge 1, 2\le k \le p
\right\}.
\]
In particular $\Ext^{0}$ maps monomorphically to
$\Ext_{\Gamma(m+1)}^{2}(BP_{*})$ in that range.
\end{thm}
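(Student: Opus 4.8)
The plan is to verify the three asserted properties of the explicitly generated module $E^2_{m+1}$ in turn: its Poincar\'e series, its comodule structure, and its Ext group, with the monomorphism into $\Ext^2_{\Gamma(m+1)}(BP_*)$ falling out as a corollary of the last. First I would confirm the \Ps\ claim. The generating set is partitioned into three families, and the strategy is to match each summand of \eqref{lem-E2series-again} against a family. The generators $\bbeta_{i/j,k}$ with $i+1 \ge j+k$ should account for the first summand $g(BP_*/I_2)\,x^p/((1-x^p)(1-x_2))$ --- this is exactly the part already understood from Theorem 4.5 below dimension $p^2|\vv_1|$, so the content here is checking that the stated inequality cuts out precisely the monomials contributing in the extended range up to $p|\vv_2|$. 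The remaining two families, $\{\bbeta_{p/j,p+2-j} : 2\le j\le p\}$ together with the single substitute $\tilde\bbeta_{p/1,p+1}$, should reproduce the second summand $g(BP_*/I_2)\,x^{p^2}(1-y^{p+1})/(1-y)$; since both the honest elements and the replacement sit in the same internal degree $|\vv_2^p/v_1^j|$, passing to the substitute does not alter dimensions, so the series is unaffected by the correction.

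Next I would establish that $E^2_{m+1}$ is a sub $\Gamma(m+1)$-comodule of $E^1_{m+1}/(v_1^\infty)$. The membership of each generator in $E^1_{m+1}/(v_1^\infty)$ is already laid out in the excerpt: the $\bbeta_{p/j,p+2-j}$ for $j\ge 2$ are shown to expand as $A(m+1)$-combinations of $\llambda_2^{p-k}w^k/v_1^{j-k}$, hence lie in the quotient, while $\bbeta_{p/1,p+1}$ does not, which is precisely why it is replaced by $\tilde\bbeta_{p/1,p+1}$ --- an element manufactured to lie in $E^1_{m+1}/(v_1^\infty)$. So the task reduces to checking closure under the coaction $\psi$. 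Here I would compute $\psi$ on the generators modulo the $A(m+2)$-span of the generating set, using the standard coaction formulas on $\vv_2,\vv_3,v_1,v_2$ and the behavior of $w$ from \eqref{eq-w}. The families $\bbeta_{i/j,k}$ and $\bbeta_{p/j,p+2-j}$ are expected to be routine since they are built from $\bbeta$-type elements whose coaction is controlled; the delicate point is the single element $\tilde\bbeta_{p/1,p+1}$, whose four-term definition is rigged precisely so that the error terms in $\psi(\vv_2^p/(p^{p+1}v_1))$ --- which would otherwise leave the module --- are cancelled by the coactions of the correction terms $\vv_3/(pv_1^2)$, $v_2\vv_2^p/(pv_1^{p+2})$ and $v_2^{p^{m+1}}\vv_1/(p^2v_1^2)$.

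For the Ext computation I would set up the \CESS\ of \eqref{CESS-for-E_{m+1}^{1}} specialized to this module, or more directly exploit that $E^2_{m+1}$ is built to be $2$-free (in the sense of \autoref{lem-structure-of-B}) in the relevant range, so that its Ext is computed by the cobar-type machinery whose output is a free module over $A(m+1)/I_2 \otimes E(\hh_{1,0}) \otimes P(\bb_{1,0})$. The claimed generators $\{\bbeta'_i, \bbeta_{p/k} : i\ge 1,\, 2\le k\le p\}$ should be identified with a basis of $\Ext^0$, i.e.\ with $L_j$-type invariants, and the $E(\hh_{1,0})\otimes P(\bb_{1,0})$ factor records the contribution of $\wt_1 = t_{m+1}$ exactly as in the \SDSS. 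Finally, the monomorphism statement follows because $\Ext^0$ of $E^2_{m+1}$ maps via the connecting homomorphism associated to \eqref{ses-DE-2} into $\Ext^2_{\Gamma(m+1)}(BP_*)$, and the isomorphism $\Ext^t_{\Gamma(m+1)}(E^2_{m+1}) \cong \Ext^{t+2}_{\Gamma(m+1)}(BP_*)$ of the version~I setup shows injectivity in this range.

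I expect the main obstacle to be the comodule-closure verification for $\tilde\bbeta_{p/1,p+1}$. Checking the \Ps\ and reading off the Ext group are comparatively mechanical once the \CESS\ input from Theorem 4.5 and \autoref{lem-structure-of-B} is in place, but the whole point of the theorem --- signalled by the footnote about $i_2$ failing to be an $\Ext^0$-isomorphism for $m>0$ and by the ad hoc four-term correction --- is that the naive generator escapes $E^1_{m+1}/(v_1^\infty)$. Showing that the precise combination $\tilde\bbeta_{p/1,p+1}$ both lands in the quotient \emph{and} has coaction staying inside the span of the listed generators requires a careful coaction computation in which several potentially troublesome terms must cancel exactly, and it is here that the dimension restriction below $p|\vv_2|$ is genuinely used.
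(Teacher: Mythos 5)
Your proposal is correct in outline, and its Poincar\'e-series, Ext, and monomorphism steps track the theorem's structure faithfully, but for the crucial comodule-closure step you take a genuinely different and more laborious route than the paper. Where you propose to compute the coaction $\psi$ on each generator modulo the $A(m+2)$-span and verify that the error terms of $\psi(\tilde{\bbeta}_{p/1,p+1})$ cancel exactly, the paper instead introduces a decreasing filtration on $BP_{*}/(p^{\infty},v_{1}^{\infty})$ by declaring $\vv_{2}^{a}/p^{b}v_{1}^{c}\in F^{n}$ if and only if $a-b-c\ge n$. It then observes that every element of the first family lies in $F^{-1}$, that the reduced expansion of $\bbeta_{p/j,p+2-j}$ lies in $F^{-1}$ (though the element itself lies in $F^{-2}$), and that the reduced expansion of $\tilde{\bbeta}_{p/1,p+1}$ lies in $F^{-2}$; comodule closure then follows from these filtration positions, with no term-by-term cancellation required. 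This is exactly the device that lets the paper dispose of what you correctly single out as the main obstacle in two sentences: your explicit coaction computation for $\tilde{\bbeta}_{p/1,p+1}$ should succeed, since the four-term definition is engineered to make it work, but it costs a genuine cobar-level calculation that the filtration argument sidesteps, and the filtration also treats the whole first family uniformly rather than generator by generator. For the Ext group the paper says only that it ``can be computed similarly to the proof of Theorem 4.5'' of version I; your route via $2$-freeness and the factor $E(\hh_{1,0})\otimes P(\bb_{1,0})$ arising {\SDSS}-style from $\wt_{1}$ is consistent with that citation, and your derivation of the monomorphism statement from the connecting homomorphism of \eqref{ses-DE-2} matches the discussion in \autoref{failure-5.2}.
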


\begin{proof}
Define a decreasing filtration on $BP_{*}/(p^{\infty},v_{1}^{\infty})$
by $\vv_{2}^{a}/p^{b}v_{1}^{c}\in F^{n}$ if and only if $a-b-c \ge n$.
Then, each element of the first set belongs to $F^{-1}$
and the submodule generated by the set is a subcomodule.  
We also see that the reduced expansion of $\bbeta_{p/j,p+2-j}$ is in $F^{-1}$
though $\bbeta_{p/j,p+2-j}$ itself is belonging to $F^{-2}$, 
and the reduced expansion of $\tilde{\bbeta}_{p/1,p+1}$ is in $F^{-2}$.  
Thus the module generated by the assigned set is a comodule
as desired.

The Ext group can be computed similarly to the proof of 
Theorem 4.5.  
\end{proof}

\begin{rem}\label{failure-5.2}
From \eqref{ses-DE-2}, 
we have the {\LES}: 
\[
\xymatrix
@R=8mm
@C=10mm
{
0 \ar[r]^{}
    & \Ext^{0}(E_{m+1}^{1})
             \ar[r]^-{(i_{2})_{*}} 
        & \Ext^{0}(D_{m+1}^{1})
                \ar[r]^-{(j_{2})_{*}}  
             & \Ext^{0}(E_{m+1}^{2})
                   \ar `r[d] `[l] `[llld]_-{\delta^{1}} `[dll] [dll]    \\
    & \Ext^{1}(E_{m+1}^{1})
                   \ar[r]^-{(i_{2})_{*}}  & \cdots, &
}
\]
where all Ext groups are over $\Gamma (m+1)$.  
As we have seen in Lemma 4.1, 
the map $(i_{2})_{*}$ induces an isomorphism in
$\Ext^{0}$ for $m=0$.  
However, for $m>0$, we have a non-trivial element 
$pv_{1}\tilde{\bbeta}_{p/1,p+1} (=-v_{2}^{p^{m+1}}\vv_{1}/pv_{1})$ 
in $\ker\delta^{1}$.  
This is actually the first such element 
and the map $(i_{2})_{*}$ is still isomorphic and
$\Ext_{\Gamma (m+1)}^{0}(E_{m+1}^{2}) $ is isomorphic to $\Ext_{\Gamma
(m+1)}^{2}(BP_{*})$
below this dimension ({\ie} $p|\vv_{2}|$).  
\end{rem}

\section{Quillen operations of some elements}
Recall that the Quillen operation 
$\wrr_{j}: M \to \Sigma^{j |\wt_{1}|} M$
for $G(m+1)$-comodul $M$ is defined by  
\[
\psi(x) = \sum_{j} \wt_{1}^{j} \otimes \wrr_{j}(x) + \cdots .
\]
In the following sections we will need the action of some Quillen operations 
on $M=U_{m+1}^{*}$ 
to compute the Cartan-Eilenberg $E_{2}$-terms 
$\tilde{E}_{2}^{s,t}$ $(t \ge 1)$ of \autoref{eq-CESSpicture}.

\bigskip

A translation of \autoref{thm-E2extended} to the present context is the following.

\begin{cor}\label{cor-E2again}
Below dimension $p|\vv_{3}|$, 
we have an isomorphism 
\[
U_{m+1}^{*+2}
\cong
E(\hh_{2,0}) \otimes P(\bb_{2,0}) \otimes U_{m+1}^{2}
\]
where $U_{m+1}^{2}$ is isomorphic to the $A(m+1)/I_{2}$-module generated by 
\setcounter{equation}{\value{thm}}
\begin{equation}\label{elements-of-U}
\left\{
\uu_{i,j} = \delta^{0}\delta^{1}\left(\Frac{\vv_{2}^{j}\vv_{3}^{i}}{i! \, pv_{1}}\right), 
\uu_{p/k} = \delta^{0}\delta^{1}\left(\Frac{\vv_{3}^{p}}{pv_{1}^{k}}\right)
\mid 
0<i \le p, j \ge 0, 2\le k\le p 
\right\}
\end{equation}
\setcounter{thm}{\value{equation}}%
and $\delta^{0}$ and $\delta^{1}$ are the connecting homomorphisms
for the {\SES}s
\[
0 \to BP_{*} \to M^{0} \to N^{1} \to 0 
\qquad
and 
\qquad 
0 \to N^{1} \to M^{1} \to N^{2} \to 0
\]
respectively.  
The bidegrees of elements are 
$|\hh_{2,0}| = (1,|\wt_{2}|)$
and
$|\bb_{2,0}| = (2,|\wt_{2}^{p}|)$.  
\end{cor}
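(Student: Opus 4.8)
The plan is to deduce \autoref{cor-E2again} directly from \autoref{thm-E2extended} by applying that theorem one level up the tower of Hopf algebroids and then rewriting the answer over a smaller ground ring. Since $U_{m+1}^{t} = \Ext_{\Gamma(m+2)}^{t}(BP_{*})$ is precisely the object that \autoref{thm-E2extended} computes after the substitution $m \mapsto m+1$, the comodule playing the role of $E_{m+1}^{2}$ becomes $E_{m+2}^{2}$, with $\Ext_{\Gamma(m+2)}^{t}(E_{m+2}^{2}) \cong U_{m+1}^{t+2}$. Under $m \mapsto m+1$ the indices shift by one: $\vv_{i}$ becomes $v_{m+1+i}$ and $\wt_{i}$ becomes $t_{m+1+i}$, so in the fixed notation of this paper the new $\vv_{1},\vv_{2}$ are the old $\vv_{2},\vv_{3}$ and the new $\wt_{1}$ is $\wt_{2}$. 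Consequently the dimension bound $p|\vv_{2}|$ of \autoref{thm-E2extended} becomes $p|\vv_{3}|$, and the generators built on $\wt_{1}$ become $\hh_{2,0}$ and $\bb_{2,0}$ with $|\hh_{2,0}| = (1,|\wt_{2}|)$ and $|\bb_{2,0}| = (2,|\wt_{2}^{p}|)$, exactly as stated.

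First I would record the raw output of \autoref{thm-E2extended} at level $m+1$: below dimension $p|\vv_{3}|$,
\[
U_{m+1}^{*+2} \cong A(m+2)/I_{2} \otimes E(\hh_{2,0}) \otimes P(\bb_{2,0}) \otimes \left\{ \bbeta'_{i},\ \bbeta_{p/k} \mid i \ge 1,\ 2 \le k \le p \right\},
\]
where now $\bbeta'_{i} = \vv_{3}^{i}/(ipv_{1})$ and $\bbeta_{p/k} = \vv_{3}^{p}/(pv_{1}^{k})$ are the level-$(m+1)$ avatars of the generators. The remaining work is a change of ground ring. Since $\vv_{2}=v_{m+2}$ is a polynomial generator that is invariant over $\Gamma(m+2)$, we have $A(m+2)/I_{2} = (A(m+1)/I_{2})[\vv_{2}]$, and I would re-present the free $A(m+2)/I_{2}$-module above as a free $A(m+1)/I_{2}$-module by absorbing the powers of $\vv_{2}$ into the generating set. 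The single-index family $\{\bbeta'_{i}\}$ then spreads into the double-index family $\{\vv_{2}^{j}\bbeta'_{i} \mid i \ge 1,\ j \ge 0\}$, with the $\vv_{3}$-power $i$ cut off at $i \le p$ by the bound $p|\vv_{3}|$ while $j$ is left free; the boundary case $i=p$ with extra $v_{1}$-divisibility is supplied by $\{\bbeta_{p/k}\}$.

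Next I would match these generators with the $\delta^{0}\delta^{1}$-representatives of the corollary. Under the isomorphism $\Ext_{\Gamma(m+2)}^{*}(E_{m+2}^{2}) \cong U_{m+1}^{*+2}$, the elements of $E_{m+2}^{2} \subset E_{m+2}^{1}/(v_{1}^{\infty}) \subset N^{2}$ correspond to the images of their chromatic representatives under the composite connecting homomorphism $\delta^{0}\delta^{1}$, exactly as in the lift computation preceding \autoref{prop-W-weakinj}. Thus $\vv_{2}^{j}\bbeta'_{i}$ is carried to $\delta^{0}\delta^{1}(\vv_{2}^{j}\vv_{3}^{i}/(ipv_{1}))$ and $\bbeta_{p/k}$ to $\delta^{0}\delta^{1}(\vv_{3}^{p}/(pv_{1}^{k}))$, matching $\uu_{i,j}$ and $\uu_{p/k}$. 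The replacement of $1/i$ by $1/i!$ in the definition of $\uu_{i,j}$ is harmless: for $0<i\le p$ the ratio $i/i! = 1/(i-1)!$ is a unit in $\ints_{(p)}$, so $\uu_{i,j}$ differs from $\delta^{0}\delta^{1}(\vv_{2}^{j}\bbeta'_{i})$ only by a unit and the two generate the same $A(m+1)/I_{2}$-module.

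The main point to verify, and the step I expect to be most delicate, is that this re-presentation is an isomorphism of $A(m+1)/I_{2}$-modules throughout the stated range, with no collapse of generators or hidden relations. This reduces to a Poincar\'e-series check: multiplying the level-$(m+1)$ generating series of \autoref{thm-E2extended} by $1/(1-t^{|\vv_{2}|})$, to account for the freely adjoined powers of $\vv_{2}$, must reproduce the series of the proposed double-indexed module below $p|\vv_{3}|$. The care needed is entirely in the bookkeeping of the truncation at $p|\vv_{3}|$: one must confirm that the $\vv_{3}$-index genuinely terminates at $i=p$, that the $\bbeta_{p/k}$-part is accounted for exactly once, and---invoking \autoref{failure-5.2} at level $m+1$---that $E_{m+2}^{2}$ still represents $U_{m+1}^{*+2}$ faithfully up to this dimension, since $p|\vv_{3}|$ is precisely where its good behavior ends.
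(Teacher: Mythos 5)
Your proposal is correct and matches the paper's own (implicit) argument: the paper states \autoref{cor-E2again} as precisely ``a translation of \autoref{thm-E2extended} to the present context,'' i.e.\ apply that theorem with $m$ replaced by $m+1$ (so $p|\vv_{2}|\mapsto p|\vv_{3}|$, $\hh_{1,0},\bb_{1,0}\mapsto\hh_{2,0},\bb_{2,0}$), use the level-$(m+1)$ version of \autoref{failure-5.2} to identify $\Ext_{\Gamma(m+2)}^{t}(E_{m+2}^{2})$ with $U_{m+1}^{t+2}$ in that range, and rewrite the free $A(m+2)/I_{2}$-module as an $A(m+1)/I_{2}$-module by absorbing powers of $\vv_{2}$ into the generators named via $\delta^{0}\delta^{1}$. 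Your additional checks (the unit ratio $i/i!=1/(i-1)!$, the truncation showing $\vv_{2}^{j}\uu_{p/k}$ leaves the range for $j\ge 1$, and the Poincar\'e-series bookkeeping) are exactly the details the paper leaves to the reader.
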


In particular, we have 
\[
U_{m+1}^{2a+\varepsilon} 
\cong \bb_{2,0}^{a-1} \otimes \hh_{2,0}^{\varepsilon} \otimes U_{m+1}^{2}
\qquad
\mbox{for $a \ge 1$ and $\varepsilon = 0, 1$}. 
\]
So, it is sufficient to know the Quillen operations on $U_{m+1}^{2}$.  
Instead, we here compute the Quillen operation on 
$\Ext_{\Gamma(m+2)}^{0}(E_{m+1}^{1}/(v_{1}^{\infty}))$
after pulling back elements of \eqref{elements-of-U} by the composition of connecting homomorphisms: 
\setcounter{equation}{\value{thm}}
\begin{equation}\label{connecting-composition}
\xymatrix{
\Ext_{\Gamma(m+2)}^{0}(E_{m+1}^{1}/(v_{1}^{\infty})) \ar@{->>}[r]^-{\delta^{1}}
    & \Ext_{\Gamma(m+2)}^{1}(E_{m+1}^{1}) \ar[r]^-{\delta^{0}}_-{\cong} 
        & U_{m+1}^{2} .
}
\end{equation}
\setcounter{thm}{\value{equation}}%
The corresponding elements will be denoted by $\ttheta_{i,j}$ and
$\ttheta_{p/k}$.

\begin{rem}
The choise of $\ttheta_{i,j}$ is not unique: 
the definition of $\ttheta_{i,j}$ has ambiguity up to elements of $\ker\delta^{1}$.  
In particular, 
the comodule $B_{m+1}$ is involved in $\ker\delta^{1}$ 
and we may tack any element of $B_{m+1}$ to $\ttheta_{i,j}$. 
\end{rem}

Recall the recursive formula (3.10) 
for $\wl_{i}$, 
which are independent on $m$: 
\setcounter{equation}{\value{thm}}
\begin{equation}\label{recursive-lambda}
\wl_{1} = \llambda_{1}, 
\quad
\wl_{2} = \llambda_{2} + \ell_{1}\llambda_{1}^{p}, 
\quad
\wl_{3} = \llambda_{3} + \ell_{1}\llambda_{2}^{p} + \ell_{2}\llambda_{1}^{p^2}. 
\end{equation}
\setcounter{thm}{\value{equation}}%
On the other hand, 
the expression of $\vv_{i}$ in terms of $\llambda_{i}$ depends on $m$.  
For small values of $i$, we have

\begin{lem}\label{prop-vv-and-llambda}
In $D_{m+1}^0$ we have  
\begin{align*}
\vv_1 & = 
p \llambda_1, \\
\vv_2 & = 
p \llambda_2
+ (1-p^{p-1})v_1 \llambda_1^p
- v_1^{p^{m+1}}\llambda_1
\quad (m \ge 1),       \\
\vv_3 & \equiv 
p \llambda_3 - p^{p^2-1} v_2 \llambda_1^{p^2} + \zeta
\mod (v_1)
\quad
\mbox{where}
\quad
\zeta = 
v_{2}\llambda_{1}^{p^2} -
\begin{cases}
0    & (m=1), \\
v_{2}^{p^{m+1}}\llambda_{1}
     & (m \ge 2).
\end{cases}
\end{align*}
\end{lem}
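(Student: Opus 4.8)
The plan is to read all three relations off the Hazewinkel recursion
\[
p\ell_{n}=\sum_{0\le i<n}\ell_{i}\,v_{n-i}^{p^{i}},\qquad\ell_{0}=1,
\]
together with \eqref{recursive-lambda} and the description of the $\llambda_{i}$ inside $D_{m+1}^{0}$ recorded in the previous section. Applying the recursion at index $m+n$ and peeling off the $i=0$ summand isolates $\vv_{n}=v_{m+n}$; the remaining summands split into a \emph{new-generator} part $\sum_{1\le i<n}\ell_{i}\vv_{n-i}^{p^{i}}$, involving $\vv_{1},\dots,\vv_{n-1}$, and an \emph{old-generator} tail $\sum_{n\le i<m+n}\ell_{i}v_{m+n-i}^{p^{i}}$, involving $v_{1},\dots,v_{m}$. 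One then solves recursively, substituting the previously derived $\vv_{1},\dots,\vv_{n-1}$ and using \eqref{recursive-lambda} for each $\wl_{i}$ that appears.

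The new-generator part supplies the leading term $p\llambda_{n}$ together with the $p$-power coefficients. For $\vv_{2}$, writing $p\wl_{2}=p\llambda_{2}+p\ell_{1}\llambda_{1}^{p}=p\llambda_{2}+v_{1}\llambda_{1}^{p}$ via $p\ell_{1}=v_{1}$, and subtracting the new-generator term $\ell_{1}\vv_{1}^{p}=\ell_{1}(p\llambda_{1})^{p}=p^{p-1}v_{1}\llambda_{1}^{p}$, produces the combination $(1-p^{p-1})v_{1}\llambda_{1}^{p}$. For $\vv_{3}$, working modulo $(v_{1})$, the analogous new-generator term $\ell_{2}\vv_{1}^{p^{2}}$ with $\ell_{2}\equiv v_{2}/p$ and $\vv_{1}=p\llambda_{1}$ yields $-p^{p^{2}-1}v_{2}\llambda_{1}^{p^{2}}$, while $p\wl_{3}\equiv p\llambda_{3}+v_{2}\llambda_{1}^{p^{2}}\pmod{(v_{1})}$ since $p\ell_{1}\equiv0$ and $p\ell_{2}\equiv v_{2}$.

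The $m$-sensitive terms come entirely from the old-generator tail, and specifically from its summand at $i=m+1$, namely $\ell_{m+1}v_{n-1}^{p^{m+1}}$ with $v_{n-1}$ an old generator. Reducing $\ell_{m+1}$ to $\llambda_{1}$ in $D_{m+1}^{0}$ turns this into $\llambda_{1}v_{n-1}^{p^{m+1}}$, contributing $-v_{1}^{p^{m+1}}\llambda_{1}$ to $\vv_{2}$ and $-v_{2}^{p^{m+1}}\llambda_{1}$ to $\vv_{3}$. For $\vv_{2}$ one has $m+1\ge2$ for every $m\ge1$, so this summand always lies in the tail and the term is always present. For $\vv_{3}$ the case split in $\zeta$ is forced by the position of $i=m+1$ relative to $n=3$: when $m\ge2$ we have $m+1\ge3$, so the summand lies in the old-generator tail and contributes $-v_{2}^{p^{m+1}}\llambda_{1}$; when $m=1$ we have $m+1=2<3$, so the same index instead sits in the new-generator range, where it is the term $\ell_{2}\vv_{1}^{p^{2}}$ already accounted for in the uniform $-p^{p^{2}-1}v_{2}\llambda_{1}^{p^{2}}$ piece, and no separate $v_{2}^{p^{m+1}}\llambda_{1}$ arises. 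This is precisely why $\zeta$ drops that summand when $m=1$.

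The main obstacle is the bookkeeping of the old-generator tail. One must check that every tail summand other than the one at $i=m+1$ vanishes in $D_{m+1}^{0}$ under the identification of the $\llambda_{i}$ from the previous section, that the reduction of $\ell_{m+1}$ to $\llambda_{1}$ leaves no residual old-generator terms in the exact formula for $\vv_{2}$, and that the reduction modulo $(v_{1})$ for $\vv_{3}$ kills $\ell_{1}\vv_{2}^{p}$ together with all higher tail terms while retaining exactly the $v_{2}$-linear contributions recorded in $\zeta$. By contrast, the new-generator computation yielding $p\llambda_{n}$ and the coefficients $1-p^{p-1}$ and $-p^{p^{2}-1}$ is routine once the recursions are set up; the real work lies in isolating the single Frobenius-twisted old-generator term and confirming it is the first such contribution.
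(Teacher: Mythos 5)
There is a genuine gap, and it sits at the very first step: you assume that the paper's elements satisfy the standard Hazewinkel recursion $p\ell_{n}=\sum_{0\le i<n}\ell_{i}v_{n-i}^{p^{i}}$, evaluated at index $m+n$ with the identification $\wl_{n}=\ell_{m+n}$. That is not the recursion the paper's proof uses, and the difference is exactly the ``old-generator tail'' you are forced to wave away. The actual input is formula (3.9) of version I, which expresses $p\wl_{n}$ as $\vv_{n}$ plus terms $\ell_{j}\vv_{n-j}^{p^{j}}$ (old coefficients times \emph{new} generators) plus terms $\wl_{n-j}v_{j}^{p^{m+n-j}}$ (new coefficients times \emph{old} generators), with \emph{no} summands of the form (old coefficient)$\times$(old generator); in particular it gives $p\wl_{1}=\vv_{1}$ on the nose. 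The genuine Hazewinkel recursion at shifted index does contain such summands. Under your premises the index $m+1$ already yields $\vv_{1}=p\llambda_{1}-\sum_{1\le i\le m}\ell_{i}v_{m+1-i}^{p^{i}}$, so the first equation of the lemma fails; indeed for $m=1$ one has $p\ell_{2}=v_{2}+v_{1}^{p+1}/p\notin BP_{*}$, so $\vv_{1}=p\llambda_{1}$ is impossible if $\wl_{1}=\ell_{m+1}$ and the $v_{i}$ are Hazewinkel generators. Likewise, for $m\ge 2$ your expansion of $\vv_{2}$ carries the extra nonzero terms $\sum_{2\le i\le m}\ell_{i}v_{m+2-i}^{p^{i}}$, which do not appear in the statement being proved.

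Your proposed repair --- ``check that every tail summand other than the one at $i=m+1$ vanishes in $D_{m+1}^{0}$'' --- cannot succeed: $D_{m+1}^{0}$ is a subring of the torsion-free ring $\mathbb{Q}\otimes BP_{*}$, so these nonzero rational elements do not vanish there under any identification of the $\llambda_{i}$. Their absence from (3.9) is not a vanishing phenomenon; it reflects the fact that version I works with elements $\wl_{i}$ and generators $\vv_{i}$ \emph{adapted} to $\Gamma(m+1)$ (they are not the shifted log coefficients $\ell_{m+i}$ paired with Hazewinkel generators), and establishing that clean recursion is precisely the content your argument would have to supply. Once (3.9) is in hand, the rest of what you do --- the substitutions $p\ell_{1}=v_{1}$ and $\ell_{2}\equiv v_{2}/p$ mod $(v_{1})$, the use of \eqref{recursive-lambda}, and even your explanation of the $m=1$ versus $m\ge 2$ case split in $\zeta$ (whether $v_{2}$ is an old or a new generator, so that the term $v_{2}^{p^{m+1}}\wl_{1}$ does or does not occur) --- matches the paper's computation. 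So the routine arithmetic is fine, but the ``bookkeeping'' you defer is not bookkeeping: it is the actual mathematical content of the step, and as you have set it up it is false rather than merely unchecked.
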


\begin{proof}
By 
(3.9) 
we have 
\begin{align*}
p \wl_{1} & = \vv_{1},  \\
\quad
p \wl_{2} & = \vv_{2} + \ell_{1}\vv_{1}^{p} + \wl_{1}\vv_{1}^{p^{m+1}} 
\quad
(m \ge 1),  \\
\quad
p \wl_{3} & = \vv_{3} + \ell_{1}\vv_{2}^{p} + \ell_{2}\vv_{1}^{p^2}
+ 
\begin{cases}
v_{1}^{p^{m+2}} \wl_{2} 
  & (m=1), \\
v_{1}^{p^{m+2}} \wl_{2} + v_{2}^{p^{m+}} \wl_{1} 
  & (m \ge 2).
\end{cases}
\end{align*}
The result follows from \eqref{recursive-lambda} and the relations between $\ell_{i}$ and $v_{i}$.  
\end{proof}

Define the element $\xi$ in $D_{m+1}^{0}$ by 
\begin{align*}
\xi & =
v_{2}\vv_{2}^{p}-
\begin{cases}
0
& (m=1),    \vspace{5pt} \\
v_{1}^{p}v_{2}^{p^{m+1}}\llambda_{1}
& (m \ge 2). 
\end{cases}
\end{align*}

\begin{lem}\label{zeta-xi}
For $m \ge 1$, we have 
$v_{1}^{p}\zeta  \equiv \xi$
mod $(p^{2},v_{1}^{p^{m+1}})$ in $E_{m+1}^{1}$. 
\end{lem}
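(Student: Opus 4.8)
The plan is to prove the congruence by direct substitution, using the expressions for $\vv_2$ and $\zeta$ in terms of the $\llambda_i$ furnished by \autoref{prop-vv-and-llambda}, and reducing both cases in the definitions of $\zeta$ and $\xi$ to a single congruence for $\vv_2^p$. First I would expand the difference $v_1^p\zeta - \xi$ using the case-by-case definitions. For $m=1$ the two correction terms are absent, while for $m \ge 2$ the summand $-v_1^p v_2^{p^{m+1}}\llambda_1$ occurs in both $v_1^p\zeta$ and $\xi$ and so cancels; in either case one is left with
\[
v_1^p\zeta - \xi = v_2\left(v_1^p\llambda_1^{p^2} - \vv_2^p\right).
\]
Thus the lemma reduces to the single congruence $\vv_2^p \equiv v_1^p\llambda_1^{p^2} \pmod{(p^2, v_1^{p^{m+1}})}$.

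To establish this I would substitute $\vv_2 = p\llambda_2 + (1-p^{p-1})v_1\llambda_1^p - v_1^{p^{m+1}}\llambda_1$ and raise to the $p$-th power in two stages. Working modulo $v_1^{p^{m+1}}$ first, every monomial of the multinomial expansion that involves the third summand carries a factor of $v_1^{p^{m+1}}$ and hence drops out, leaving $\bigl(p\llambda_2 + (1-p^{p-1})v_1\llambda_1^p\bigr)^p$. Working then modulo $p^2$, every term containing at least one factor of $p\llambda_2$ is divisible by $p^2$: for terms with exactly one such factor the coefficient $\binom{p}{1}=p$ supplies a second factor of $p$, while terms with $k \ge 2$ such factors carry $p^k$. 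Hence only the pure term $\bigl((1-p^{p-1})v_1\llambda_1^p\bigr)^p$ survives.

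Finally I would simplify the surviving term. Since $(1-p^{p-1})^p = 1 - p^p + \binom{p}{2}p^{2(p-1)} - \cdots \equiv 1 \pmod{p^2}$, because $p^p$ and all higher terms are divisible by $p^2$ for every prime $p$ (including $p=2$, where $p^p=p^2$), this term equals $v_1^p\llambda_1^{p^2}$ modulo $p^2$. This gives the required congruence for $\vv_2^p$, and multiplying by $v_2$ completes the argument via the displayed reduction.

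I expect the main obstacle to be the careful bookkeeping of the modulo $p^2$ arithmetic: verifying that every mixed term in the $p$-th power expansion genuinely carries a factor of $p^2$ rather than merely $p$, and checking that the auxiliary coefficients implicit in the congruence remain in $D_{m+1}^0$ rather than requiring a division by $p$ that $D_{m+1}^0$ does not admit. The identity $(1-p^{p-1})^p \equiv 1 \pmod{p^2}$ is the one point where the exact coefficient $1-p^{p-1}$ in $\vv_2$ is used, so I would verify it separately at $p=2$.
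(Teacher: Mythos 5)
Your proof is correct and is essentially the paper's own argument: after cancelling the $m$-dependent correction terms, both proofs reduce to the single congruence $\vv_{2}^{p}\equiv (v_{1}\llambda_{1}^{p})^{p}$ mod $(p^{2},v_{1}^{p^{m+1}})$. The paper gets this by observing $\vv_{2}\equiv v_{1}\llambda_{1}^{p}$ mod $(p,v_{1}^{p^{m+1}})$ and (implicitly) raising to the $p$-th power, which is exactly what your explicit multinomial bookkeeping, including the check $(1-p^{p-1})^{p}\equiv 1$ mod $p^{2}$, verifies by hand.
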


\begin{proof}
Note that 
$\vv_{2} \equiv v_{1}\llambda_{1}^{p}$
mod $(p,v_{1}^{p^{m+1}})$. 
For $m \ge 2$
\[
v_{1}^{p}\zeta = 
v_{2}(v_{1}\llambda_{1}^{p})^{p} 
- v_{1}^{p}v_{2}^{p^{m+1}}\llambda_{1}
\equiv 
v_{2}\vv_{2}^{p} 
- v_{1}^{p}v_{2}^{p^{m+1}}\llambda_{1}
= \xi 
\]
mod $(p^{2},v_{1}^{p^{m+1}})$.  
The case $m=1$ is similarly proved. 
\end{proof}

\begin{prop}\label{def-up-1}
Define $\ttheta_{p,j}$ for $j \ge 0$ by 
\setcounter{equation}{\value{thm}}
\begin{equation}\label{def-up-1,j}
\ttheta_{p,j}=
\vv_{2}^{j}
\left(
\Frac{\vv_{3}^{p}}{p! \cdot p v_{1}}
- \Frac{\xi^{p}}{p! \cdot p v_{1}^{1+p^2}} 
\right) .
\end{equation}
\setcounter{thm}{\value{equation}}%
Then it is in 
$\Ext_{\Gamma(m+2)}^{0}(E_{m+1}^{1}/(v_{1}^{\infty}))$ 
and satisfies 
$\delta^{0}\delta^{1}(\ttheta_{p,j}) = \uu_{p,j}$. 
\end{prop}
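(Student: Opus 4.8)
The statement has two parts: that \eqref{def-up-1,j} defines a class in $\Ext_{\Gamma(m+2)}^{0}(E_{m+1}^{1}/(v_{1}^{\infty}))$, and that $\delta^{0}\delta^{1}$ carries it to $\uu_{p,j}$. The plan is to control the defining fraction through the reductions in \autoref{prop-vv-and-llambda} and \autoref{zeta-xi}, to settle membership first, and then to deduce the connecting-homomorphism identity by naturality.

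First I would show $\ttheta_{p,j}\in E_{m+1}^{1}/(v_{1}^{\infty})$ by the criterion recalled in the proof of \autoref{thm-B1}: a class in $N^{2}$ lies in $E_{m+1}^{1}/(v_{1}^{\infty})$ exactly when its image in $(M^{0}/D_{m+1}^{0})/(v_{1}^{\infty})$ vanishes. Expanding $\vv_{3}^{p}$ with \autoref{prop-vv-and-llambda} and writing $\vv_{3}\equiv(p\llambda_{3}-p^{p^{2}-1}v_{2}\llambda_{1}^{p^{2}})+\zeta$ mod $(v_{1})$, the summands of $\vv_{2}^{j}\vv_{3}^{p}/(p!\cdot pv_{1})$ carrying a nontrivial binomial coefficient $\binom{p}{k}$ ($0<k<p$) pick up an extra factor of $p$, and the remaining mixed terms reduce into $D_{m+1}^{0}$ modulo $v_{1}^{\infty}$; the one genuinely obstructing summand is the pure term $\vv_{2}^{j}\zeta^{p}/(p!\cdot pv_{1})$. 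By \autoref{zeta-xi} we have $v_{1}^{p}\zeta\equiv\xi$ modulo $(p^{2},v_{1}^{p^{m+1}})$, hence $\xi^{p}\equiv v_{1}^{p^{2}}\zeta^{p}$, so the subtracted term $\vv_{2}^{j}\xi^{p}/(p!\cdot pv_{1}^{1+p^{2}})$ agrees with $\vv_{2}^{j}\zeta^{p}/(p!\cdot pv_{1})$ to the order needed below $p|\vv_{3}|$ and removes precisely the bad image. For invariance I would note that the naive generator $\vv_{2}^{j}\vv_{3}^{p}/(p!\cdot pv_{1})$ is $\Gamma(m+2)$-invariant in $N^{2}$—it is the class whose image under $\delta^{0}\delta^{1}$ defines $\uu_{p,j}$ in \eqref{elements-of-U}—and check that the correction is invariant as well, using that $v_{1}$ is invariant over $\Gamma(m+2)$ (since $t_{1},\dots,t_{m+1}$ are killed there) and that $\llambda_{1}$, hence $\zeta$, is primitive modulo the relevant ideal. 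As a difference of invariants, $\ttheta_{p,j}$ then represents a class in $\Ext^{0}$.

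For the second part I would invoke naturality of the chromatic connecting homomorphisms with respect to the vertical inclusions of short exact sequences in the diagram of \autoref{thm-B1}, together with the compatibility of the two maps $\delta^{0}$ arising from $0\to BP_{*}\to D_{m+1}^{0}\to E_{m+1}^{1}\to 0$ and $0\to BP_{*}\to M^{0}\to N^{1}\to 0$. This identifies $\delta^{0}\delta^{1}(\ttheta_{p,j})$ with the ambient chromatic $\delta^{0}\delta^{1}$ of the fraction \eqref{def-up-1,j}, which by linearity equals $\uu_{p,j}$ minus $\delta^{0}\delta^{1}$ of the correction. It then suffices to see the correction lies in $\ker(\delta^{0}\delta^{1})$. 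Since $M^{0}=v_{0}^{-1}BP_{*}$ is rational, $\Ext_{\Gamma(m+2)}^{1}(M^{0})=0$, so $\delta^{0}$ is injective and it is enough to show $\delta^{1}$ of the correction vanishes; and since the correction is congruent to $\vv_{2}^{j}\zeta^{p}/(p!\cdot pv_{1})$, in which $v_{1}$ occurs only to the first power, its $v_{1}$-Bockstein should vanish once one checks that the numerator lifts to an invariant class of $M^{1}$.

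The hard part will be the valuation bookkeeping inside the $p$-th powers $\vv_{3}^{p}$ and $\xi^{p}$: one must verify that, modulo $(p^{2},v_{1}^{p^{m+1}})$, the single correction term simultaneously (i) cancels exactly the obstruction to membership in $E_{m+1}^{1}/(v_{1}^{\infty})$ and (ii) remains in $\ker\delta^{1}$. Because $v_{1}^{p}\zeta\equiv\xi$ holds only modulo that ideal, the main care is in confirming that the resulting error terms are negligible throughout the range below $p|\vv_{3}|$, so that the same $\ttheta_{p,j}$ works for all $j\ge 0$ at once.
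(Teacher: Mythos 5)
Your proposal is correct and follows essentially the same route as the paper's proof: membership in $E_{m+1}^{1}/(v_{1}^{\infty})$ is obtained from \autoref{prop-vv-and-llambda} and \autoref{zeta-xi} by showing the $\xi^{p}$-correction cancels exactly the obstructing $\zeta^{p}$-term, invariance is left to direct computation, and the identity $\delta^{0}\delta^{1}(\ttheta_{p,j})=\uu_{p,j}$ follows because the correction term lies in $\ker\delta^{1}$. The only cosmetic differences are that you route membership through the $(M^{0}/D_{m+1}^{0})/(v_{1}^{\infty})$ criterion rather than computing directly modulo $E_{m+1}^{1}/(v_{1}^{\infty})$, and you justify the $\ker\delta^{1}$ claim by lifting to an invariant class of $M^{1}$, which is precisely what the paper's unproved assertion amounts to.
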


\begin{proof}
By \autoref{zeta-xi} we see that 
\[
\Frac{\vv_{2}^{j}\vv_{3}^{p}}{p! \cdot p v_{1}}
\equiv
\frac{\vv_{2}^{j}(p\llambda_{3} - p^{p^2-1}v_{2}\llambda_{1}^{p^2} + \zeta)^{p}}{p! \cdot p v_{1}}
\equiv
\frac{\vv_{2}^{j}(v_{1}^{p}\zeta)^{p}}{p! \cdot p v_{1}^{1+p^2}}
\equiv
\frac{\vv_{2}^{j}\xi^{p}}{p! \cdot p v_{1}^{1+p^2}}
\]
mod $E_{m+1}^{1}/(v_{1}^{\infty})$.  
Direct calculations show that $\ttheta_{p,j}$ is invariant over $\Gamma(m+2)$.  
Since $v_{1}^{-p^2-1}\xi^{p}/p^2$ is in $\ker\delta^{1}$, 
the second statement follows. 
\end{proof}

\begin{prop}\label{prop-ttheta-and-QuillenOp}
Define $\ttheta_{i,j}$ for $0<i \le p$ and $j \ge 0$ 
by \eqref{def-up-1,j} 
and the downward induction on $i$: 
\[
\ttheta_{i,j} =  v_{2}^{-1}\wrr_{p^2}(\ttheta_{i+1,j}) 
\qquad
\mbox{for $0<i<p$}.
\]
Then they are in 
$\Ext_{\Gamma(m+2)}^{0}(E_{m+1}^{1}/(v_{1}^{\infty}))$
and satisfy 
$\delta^{0}\delta^{1}(\ttheta_{i,j})=\uu_{i,j}$.  
\end{prop}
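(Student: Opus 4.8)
The plan is to argue by downward induction on $i$, the base case $i=p$ being exactly \autoref{def-up-1}. So suppose $\ttheta_{i+1,j}$ has already been shown to lie in $\Ext_{\Gamma(m+2)}^{0}(E_{m+1}^{1}/(v_{1}^{\infty}))$ and to satisfy $\delta^{0}\delta^{1}(\ttheta_{i+1,j})=\uu_{i+1,j}$. For the inductive step I must check three things about $\ttheta_{i,j}=v_{2}^{-1}\wrr_{p^{2}}(\ttheta_{i+1,j})$: that $\wrr_{p^{2}}(\ttheta_{i+1,j})$ is divisible by $v_{2}$ inside $E_{m+1}^{1}/(v_{1}^{\infty})$ (so that the expression is legitimate), that the quotient is again invariant over $\Gamma(m+2)$, and that $\delta^{0}\delta^{1}(\ttheta_{i,j})=\uu_{i,j}$.

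The first move is a formal reduction. The operation $\wrr_{p^{2}}$ is defined on any $G(m+1)$-comodule by extracting the coefficient of $\wt_{1}^{p^{2}}$ in the coaction, and the connecting homomorphisms $\delta^{1}$ and $\delta^{0}$ of \eqref{connecting-composition} are maps of $G(m+1)$-comodules, this being the residual structure furnished by the extension $(A(m+1),G(m+1))\to(BP_{*},\Gamma(m+1))\to(BP_{*},\Gamma(m+2))$. Since any comodule map commutes with the Quillen operations (these being read off from the coaction), we get $\delta^{0}\delta^{1}\circ\wrr_{p^{2}}=\wrr_{p^{2}}\circ\delta^{0}\delta^{1}$; and because $v_{2}\in A(m+1)$ is central and, for $m\ge1$, primitive over $\Gamma(m+2)$, the maps $\delta^{0},\delta^{1}$ are $A(m+1)$-linear and commute with multiplication by $v_{2}$. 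Combining these facts,
\[
\delta^{0}\delta^{1}(\ttheta_{i,j})
= v_{2}^{-1}\,\wrr_{p^{2}}\!\big(\delta^{0}\delta^{1}(\ttheta_{i+1,j})\big)
= v_{2}^{-1}\,\wrr_{p^{2}}(\uu_{i+1,j}),
\]
so the statement about $\delta^{0}\delta^{1}$ reduces to the single identity $\wrr_{p^{2}}(\uu_{i+1,j})=v_{2}\,\uu_{i,j}$ in $U_{m+1}^{2}$. Granting the divisibility, the invariance of the quotient is then automatic, since $v_{2}$ is an invariant non-zero-divisor on $E_{m+1}^{1}/(v_{1}^{\infty})$: if $v_{2}x$ is $\Gamma(m+2)$-invariant then so is $x$.

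The heart of the argument — and the step I expect to be the main obstacle — is the explicit Quillen-operation computation. Writing $\uu_{i+1,j}=\delta^{0}\delta^{1}(\vv_{2}^{j}\vv_{3}^{i+1}/((i+1)!\,pv_{1}))$ and applying the commutativity just used, it is enough to evaluate $\wrr_{p^{2}}$ on the representing cochain $\vv_{2}^{j}\vv_{3}^{i+1}/((i+1)!\,pv_{1})$. The mechanism is that $\wrr_{p^{2}}$ \emph{creates a $v_{2}$ out of a $\vv_{3}$}: a degree count gives $|\vv_{3}|-p^{2}|\wt_{1}|=|v_{2}|$, and the expression of $\vv_{3}$ in terms of the $\llambda_{i}$ from \autoref{prop-vv-and-llambda}, combined with \autoref{zeta-xi}, identifies the coefficient of $\wt_{1}^{p^{2}}$ in the coaction on $\vv_{3}$ with $v_{2}$ modulo $(p,v_{1})$, that is, $\wrr_{p^{2}}(\vv_{3})\equiv v_{2}$. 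Feeding this into the Cartan formula for $\wrr_{p^{2}}(\vv_{3}^{i+1})$, the terms in which exactly one factor receives $\wrr_{p^{2}}$ and the remaining ones receive $\wrr_{0}$ contribute $(i+1)\,v_{2}\,\vv_{3}^{i}$; after division by $(i+1)!$ this is precisely $v_{2}\cdot\vv_{3}^{i}/i!$, which simultaneously exhibits the factor of $v_{2}$ needed for divisibility and yields $\uu_{i,j}$ after applying $\delta^{0}\delta^{1}$.

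What remains, and is genuinely laborious, is to show that all other contributions die in $U_{m+1}^{2}$. These are the Cartan terms in which some factor receives $\wrr_{a}$ with $0<a<p^{2}$, the cross terms coming from $\wrr_{a}(\vv_{2}^{j})$ and from the coaction on $1/v_{1}$, and the contribution of the correction term $\xi^{p}/(p!\,pv_{1}^{1+p^{2}})$ in \eqref{def-up-1,j}. Each must be checked to vanish in $U_{m+1}^{2}\cong A(m+1)/I_{2}\{\uu_{i,j},\uu_{p/k}\}$ of \autoref{cor-E2again}, the cancellations being forced by the $A(m+1)/I_{2}$-module relations and by degree reasons. Carrying out this bookkeeping completes the inductive step, and with it the proposition.
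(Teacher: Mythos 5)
Your skeleton coincides with the paper's own proof: downward induction with base case \autoref{def-up-1}, the fact that $\delta^{0}$ and $\delta^{1}$ are maps of $G(m+1)$-comodules and hence commute with Quillen operations, and the observation that the correction term in \eqref{def-up-1,j} lies in $\ker\delta^{1}$ and so never affects the image in $U_{m+1}^{2}$. Your supplementary remarks are also sound and are details the paper leaves implicit: invariance of the quotient follows because $v_{2}$ is an invariant non-zero-divisor on $E_{m+1}^{1}/(v_{1}^{\infty})\subset N^{2}$, and the reduction to the single identity $\wrr_{p^{2}}(\uu_{i+1,j})=v_{2}\uu_{i,j}$ is legitimate once one notes that $U_{m+1}^{2}$ is $v_{2}$-torsion free in this range (it is a free $A(m+1)/I_{2}$-module by \autoref{cor-E2again}), so that dividing by $v_{2}$ there is unambiguous.

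The genuine gap is that the step you yourself call the heart of the argument --- the vanishing of all contributions other than the leading Cartan term --- is deferred as ``laborious bookkeeping'' and never carried out. In fact it is neither laborious nor does it require the module relations of $U_{m+1}^{2}$: work upstairs in $E_{m+1}^{1}/(v_{1}^{\infty})\subset N^{2}$, where the representing fraction has the single denominator $pv_{1}$. For $m\ge 1$ one has $\eta_{R}(v_{1})=v_{1}$, and every term of the coaction on $\vv_{2}$, and every term of the coaction on $\vv_{3}$ other than $\vv_{3}+v_{2}\wt_{1}^{p^{2}}-v_{2}^{p^{m+1}}\wt_{1}$, carries a factor of $p$ or of $v_{1}$ and is therefore annihilated against the denominator $pv_{1}$ in $N^{2}$. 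Hence the coaction formula is exact, not merely a leading-term congruence:
\[
\psi\left(\Frac{\vv_{2}^{j}\vv_{3}^{i+1}}{(i+1)!\,pv_{1}}\right)
=\Frac{\vv_{2}^{j}\left(\vv_{3}+v_{2}\wt_{1}^{p^{2}}-v_{2}^{p^{m+1}}\wt_{1}\right)^{i+1}}{(i+1)!\,pv_{1}},
\]
which is precisely the formula displayed (for exponent one) in the proof of \autoref{Ext-R0}. Since no monomial $\wt_{1}^{ap^{2}+b}$ with $a+b=k\le p$ equals $\wt_{1}^{p^{2}}$ except $(a,b)=(1,0)$, the coefficient of $\wt_{1}^{p^{2}}$ is exactly $(i+1)v_{2}\vv_{2}^{j}\vv_{3}^{i}$, so $\wrr_{p^{2}}$ of the fraction equals $v_{2}\vv_{2}^{j}\vv_{3}^{i}/(i!\,pv_{1})$ on the nose; this yields both the $v_{2}$-divisibility and, after applying $\delta^{0}\delta^{1}$, the identity you need. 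Note also that your listed worry about the correction term $\xi^{p}/(p!\,pv_{1}^{1+p^{2}})$ is spurious under your own reduction: it lies in $\ker\delta^{1}$, so it never reaches $U_{m+1}^{2}$, and it is harmless for divisibility because $\xi$ itself is divisible by $v_{2}$.
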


\begin{proof}
The first statement is obvious 
since $\Ext_{\Gamma(m+2)}^{0}(E_{m+1}^{1}/(v_{1}^{\infty}))$ is a subcomodule of $E_{m+1}^{1}/(v_{1}^{\infty})$. 
Since the second term of \eqref{def-up-1,j}  is in $\ker\delta^{1}$ 
and each Quillen operation commutes with the connecting homomorphism, 
the second statement follows.  
\end{proof}

The following lemma on the Quillen operation is useful.  

\begin{lem}\label{lem-iteration-quillen}
The $k$-fold iteration of $\wrr_{p^{j}}$ is congruent to 
$k! \, \wrr_{kp^{j}}$ modulo $p^{j}$.  
\end{lem}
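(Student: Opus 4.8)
The plan is to understand the Quillen operations $\wrr_{j}$ through their coaction-theoretic definition and to exploit the fact that iterating $\wrr_{p^{j}}$ corresponds, on the level of the comodule structure, to raising the relevant group-like element to a power. Recall that for a $G(m+1)$-comodule $M$ the operations are packaged in $\psi(x) = \sum_{j} \wt_{1}^{j}\otimes \wrr_{j}(x) + \cdots$, where the omitted terms involve $\wt_{i}$ with $i>1$. The key structural input is that $\psi$ is coassociative, so iterating a single operation $\wrr_{p^{j}}$ amounts to reading off the coefficient of $(\wt_{1}^{p^{j}})^{\otimes k}$ in the $k$-fold iterated coaction, i.e. in $\psi^{(k)}(x)$. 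First I would set up this iterated-coaction bookkeeping and reduce the statement to an identity among coefficients extracted from tensor powers of $\psi$.

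The main tool will be coassociativity: $\psi^{(k)}$ does not depend on the order of application, and the coproduct on $G(m+1)$ satisfies $\psi(\wt_{1}) = \wt_{1}\otimes 1 + 1\otimes \wt_{1} + (\text{higher terms})$, so modulo $p^{j}$ and modulo the ideal generated by the higher $\wt_{i}$ the relevant part of the coaction behaves like a \emph{divided-power} or binomial structure on the single variable $\wt_{1}$. Concretely, I would track the combinatorial factor arising from distributing a single raised power $\wt_{1}^{kp^{j}}$ across $k$ successive applications of $\wrr_{p^{j}}$. Each application pulls out one factor of $\wt_{1}^{p^{j}}$, and the number of ordered ways to partition the total exponent $kp^{j}$ into $k$ blocks of size exactly $p^{j}$ — equivalently the multinomial coefficient $\binom{kp^{j}}{p^{j},\dots,p^{j}} = (kp^{j})!/((p^{j})!)^{k}$ relative to the single operation $\wrr_{kp^{j}}$ that extracts $\wt_{1}^{kp^{j}}$ in one step — produces exactly the factor $k!$ after the appropriate normalization. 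The assertion $\wrr_{p^{j}}^{\,k} \equiv k!\,\wrr_{kp^{j}} \pmod{p^{j}}$ is then the statement that this comparison of coefficients collapses to $k!$ once one works modulo $p^{j}$.

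The place where the reduction modulo $p^{j}$ does real work, and where I expect the main obstacle to be, is controlling the contribution of the higher-order terms in $\psi(\wt_{1})$ — the ``$+\cdots$'' above — and the cross terms in the iterated coproduct that involve $\wt_{2},\wt_{3},\dots$ or lower powers of $\wt_{1}$. A priori these contaminate the coefficient of $\wt_{1}^{kp^{j}}$ when one iterates, and the clean multinomial count only emerges after one argues that all such mixed contributions vanish modulo $p^{j}$. The cleanest route is to verify that in the Hopf algebroid $G(m+1) = A(m+1)[\wt_{1}]$ the coproduct of $\wt_{1}^{p^{j}}$ is primitive modulo $p^{j}$, i.e. $\psi(\wt_{1}^{p^{j}}) \equiv \wt_{1}^{p^{j}}\otimes 1 + 1\otimes \wt_{1}^{p^{j}} \pmod{p^{j}}$, which follows from the Frobenius-type congruence $\psi(\wt_{1})^{p^{j}} \equiv \psi(\wt_{1}^{p^{j}})$ together with the freshman's-dream reduction of the binomial coefficients $\binom{p^{j}}{a}$ mod $p^{j}$. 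Granting this primitivity, the iterated coaction restricted to powers of $\wt_{1}^{p^{j}}$ becomes a genuine binomial (cocommutative, primitively generated) coproduct, the multinomial coefficient reduces to $k!$ by the standard identity $(kp^{j})!/((p^{j})!)^{k} \equiv k! \pmod{p}$ refined to the modulus $p^{j}$, and the lemma follows. I would therefore organize the proof as: first establish the primitivity of $\wt_{1}^{p^{j}}$ modulo $p^{j}$; then apply coassociativity to identify $\wrr_{p^{j}}^{\,k}$ with the coefficient of $(\wt_{1}^{p^{j}})^{\otimes k}$; and finally evaluate the resulting combinatorial coefficient modulo $p^{j}$ to obtain $k!$.
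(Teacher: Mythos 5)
Your plan rests on a step that is false. You propose to first establish that $\wt_{1}^{p^{j}}$ is primitive modulo $p^{j}$, i.e.\ that $\Delta(\wt_{1}^{p^{j}})\equiv \wt_{1}^{p^{j}}\otimes 1+1\otimes\wt_{1}^{p^{j}}\pmod{p^{j}}$, by a freshman's-dream argument. But the freshman's dream only gives divisibility by $p$, not by $p^{j}$. Since $\wt_{1}$ is primitive, one has
\[
\Delta\left(\wt_{1}^{p^{j}}\right)=\sum_{0\le a\le p^{j}}\binom{p^{j}}{a}\,\wt_{1}^{a}\otimes\wt_{1}^{p^{j}-a},
\]
and from $\binom{p^{j}}{a}=\frac{p^{j}}{a}\binom{p^{j}-1}{a-1}$ with $\binom{p^{j}-1}{a-1}$ a $p$-adic unit, the coefficient $\binom{p^{j}}{a}$ is divisible by $p$ exactly $j-v_{p}(a)$ times ($v_{p}$ the $p$-adic valuation). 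For $j\ge 2$ and $a=p^{j-1}$ this valuation is $1$, not $j$: concretely $\binom{9}{3}=84\equiv 3\pmod 9$, so $\wt_{1}^{9}$ is not primitive mod $9$ for $p=3$. The lemma you planned to prove first is simply unavailable.

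Fortunately neither that step nor the worry motivating it is needed, and deleting them repairs your argument. The statement concerns $G(m+1)$-comodules, and $G(m+1)=A(m+1)[\wt_{1}]$ is polynomial on the single generator $\wt_{1}$; there are no elements $\wt_{2},\wt_{3},\dots$ in $G(m+1)$, so the ``cross terms'' you feared cannot occur, and the coaction is exactly $\psi(x)=\sum_{n}\wt_{1}^{n}\otimes\wrr_{n}(x)$ with $\wt_{1}$ primitive on the nose. Coassociativity then gives, by comparing the coefficient of $\wt_{1}^{t}\otimes\wt_{1}^{s}$ in $(1\otimes\psi)\psi=(\Delta\otimes 1)\psi$, the \emph{exact} composition rule $\wrr_{s}\wrr_{t}=\binom{s+t}{s}\wrr_{s+t}$, hence the exact identity
\[
\wrr_{p^{j}}^{\,k}=\binom{2p^{j}}{p^{j}}\binom{3p^{j}}{p^{j}}\dotsm\binom{kp^{j}}{p^{j}}\,\wrr_{kp^{j}}
=\frac{(kp^{j})!}{\left((p^{j})!\right)^{k}}\,\wrr_{kp^{j}},
\]
with no reduction mod $p^{j}$ anywhere in the comodule algebra. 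The modulus $p^{j}$ enters only in the final arithmetic evaluation $\prod_{a=2}^{k}\binom{ap^{j}}{p^{j}}\equiv\prod_{a=2}^{k}a=k!\pmod{p^{j}}$, which is the congruence you correctly identified at the end. This repaired argument is precisely the paper's proof, which quotes the rule $r_{s}r_{t}=\binom{s+t}{s}r_{s+t}$ and then reduces the multinomial coefficient mod $p^{j}$.
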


\begin{proof}
Since 
$r_{s}r_{t} = \binom{s+t}{s} r_{s+t}$, 
the $k$-fold iteration of $\wrr_{p^{j}}$ is equal to 
\[
\frac{(kp^{j})!}{(p^{j}!)^{k}} \wrr_{kp^{j}},
\]
where the coefficient is congruent to $k!$ modulo $p^{j}$.
\end{proof}

Then we have

\begin{prop}\label{thm-uij}
Quillen operations on $\ttheta_{1,j}$ for $0 \le j \le p^2-p$ 
are given by 
\[
\wrr_{p^{2}}(\ttheta_{1,j}) = 0
\quad
\mbox{and}
\quad
\wrr_{p}(\ttheta_{1,j}) = j v_{2} \bbeta_{j+p-1/p}
\]
up to unit scalar multiplication. 
\end{prop}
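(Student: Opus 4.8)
The plan is to eliminate $\vv_3$ entirely from $\ttheta_{1,j}$ and replace it by a pure power of $\vv_2$, after which both Quillen operations become immediate. (Throughout I write $\doteq$ for equality up to a unit scalar, as in the statement.) Concretely, I would first promote the $p$-th power congruence extracted in the proof of \autoref{def-up-1}, namely $\vv_3^{p} \equiv \xi^{p}/v_1^{p^2}$ with $\xi \equiv v_2\vv_2^{p}$, to the single-power statement $\vv_3 \equiv v_2\vv_2^{p}/v_1^{p}$ in $E_{m+1}^{1}/(v_1^\infty)$. This is degree-consistent, since $|v_2| + p|\vv_2| - p|v_1| = |\vv_3|$, and it holds to leading order because \autoref{prop-vv-and-llambda} gives $\vv_3 \equiv v_2\llambda_1^{p^2} \pmod{(p,v_1)}$ while $\vv_2 \equiv v_1\llambda_1^{p}$, so that $v_2\vv_2^{p}/v_1^{p} \equiv v_2\llambda_1^{p^2}$ as well. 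Using this relation inside $\uu_{1,j} = \delta^0\delta^1(\vv_2^{j}\vv_3/(pv_1))$ shows that we may take the representative $\ttheta_{1,j} \doteq v_2\,\vv_2^{\,j+p}/(p\,v_1^{\,p+1})$, which lies in $E_{m+1}^{1}/(v_1^\infty)$ and has the correct image under $\delta^0\delta^1$.

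Next I would record the two leading Quillen operations on $\vv_2$, both read off from \autoref{prop-vv-and-llambda} together with $\eta_R(\llambda_1) = \llambda_1 + \wt_1$ (which follows from $\vv_1 = p\llambda_1$ and $\eta_R(\vv_1) = \vv_1 + p\wt_1$). Since $\eta_R(\llambda_1^{p})=(\llambda_1+\wt_1)^{p}$ has no $\wt_1^{p^2}$ term and $\wrr_{p^2}(\llambda_2)=0$ for degree reasons, the expression $\vv_2 = p\llambda_2 + (1-p^{p-1})v_1\llambda_1^{p} - v_1^{p^{m+1}}\llambda_1$ gives $\wrr_{p^2}(\vv_2)=0$; and because the $\wt_1^{p}$-coefficient of $(\llambda_1+\wt_1)^{p}$ is $1$, the same expression gives $\wrr_{p}(\vv_2)\doteq v_1$ (the $p\llambda_2$ and $v_1^{p^{m+1}}\llambda_1$ contributions being divisible by $p$ or killed by $\wrr_p(\llambda_1)=0$). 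For $m\ge 2$ the elements $v_2$ and the genuine $v_1$ are $\Gamma(m+2)$-invariant, so both are annihilated by $\wrr_{p}$ and $\wrr_{p^2}$; the case $m=1$, where $v_2=\vv_1$, needs the separate remark that $\wrr_p(\vv_1)=\wrr_{p^2}(\vv_1)=0$ since $\wrr_p(\llambda_1)=\wrr_{p^2}(\llambda_1)=0$.

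With the representative in hand the conclusion is mechanical. Applying $\wrr_{p^2}$ to $v_2\vv_2^{\,j+p}/(pv_1^{\,p+1})$ and using the Cartan formula, every resulting term carries a factor $\wrr_{p^2}(\vv_2)=0$, so $\wrr_{p^2}(\ttheta_{1,j})=0$. Applying $\wrr_{p}$ and using $\wrr_{p}(\vv_2)\doteq v_1$, the leading term is $(j+p)\,v_2\,\vv_2^{\,j+p-1}\,v_1/(p\,v_1^{\,p+1}) = (j+p)\,v_2\,\bbeta_{j+p-1/p}$; and since $p\,\bbeta_{j+p-1/p} = \vv_2^{\,j+p-1}/v_1^{p}$ has no $p$ in its denominator and is therefore zero in $N^{2}$, this reduces to $j\,v_2\,\bbeta_{j+p-1/p}$, as asserted. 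The hypothesis $0\le j\le p^2-p$ is exactly what keeps $j+p\le p^2$, so that the whole computation stays below dimension $p|\vv_3|$ and $\bbeta_{j+p-1/p}$ is a genuine generator of $L_2(B_{m+1})$ in the sense of \autoref{lem-structure-of-B}.

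The main obstacle is the first step: justifying the pure-$\vv_2$ representative. The congruence supplied by \autoref{def-up-1} is naturally a statement about $p$-th powers, so promoting it to the single-power relation $\vv_3\equiv v_2\vv_2^{p}/v_1^{p}$ requires keeping careful track of the error terms in \autoref{prop-vv-and-llambda} modulo $(p,v_1^{p^{m+1}})$, and one must check that the ambiguity in $\ttheta_{1,j}$ --- which by the remark following \autoref{prop-ttheta-and-QuillenOp} is a subcomodule of $\ker\delta^1$ containing $B_{m+1}$ --- corrupts neither answer. The decisive point is that $\wrr_{p^2}$ annihilates $B_{m+1}$ (each generator $\bbeta'_{i/i}$ is $\vv_2^{i}$ divided by $i\,p\,v_1^{i}$, and $\wrr_{p^2}(\vv_2)=0$), so the vanishing $\wrr_{p^2}(\ttheta_{1,j})=0$ is insensitive to the choice of representative; verifying that the residual ambiguity seen by $\wrr_{p}$ lands in the $p$-divisible part, and handling the $m=1$ versus $m\ge 2$ dichotomy, is where the remaining care is needed.
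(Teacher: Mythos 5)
Your proof collapses at its first step: the ``pure-$\vv_2$'' element $v_2\vv_2^{j+p}/(pv_1^{p+1})$ is not a valid representative of $\ttheta_{1,j}$, because $\delta^0\delta^1$ sends it to $0$, not to $\uu_{1,j}$. Over $\Gamma(m+2)$ one has $\eta_R(\vv_2)=\vv_2+p\wt_2$ while $v_1,v_2$ are invariant, so $\eta_R\bigl(v_2\vv_2^{j+p}/(pv_1^{p+1})\bigr)-v_2\vv_2^{j+p}/(pv_1^{p+1})$ is $p$-integral and hence vanishes in $M^1$; thus your element lifts to an invariant element of $M^1$ (equivalently of $v_1^{-1}E_{m+1}^1$), which is precisely the statement that it lies in $\ker\delta^1$. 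This matches the paper's own usage: pure $\vv_2$-monomials over $pv_1^k$ are exactly the terms declared to lie in $\ker\delta^1$ in the proof of \autoref{def-up-1} (the $\xi^p$-term), in \autoref{lem-u_{p/k}}, and in the proof of \autoref{Ext-R0} (where $v_2^{-1}\wrr_{p^2}(\ttheta_{1,j})=\bbeta_j\in\ker\delta^1$). So in $\ttheta_{1,j}\doteq\Frac{\vv_2^j\vv_3}{pv_1}-\Frac{v_2\vv_2^{j+p}}{\binom{j+p}{p}pv_1^{p+1}}$ (the coefficient $1/\binom{i+p}{p}$ can be read off from the cobar computation in the proof of \autoref{thm-7.3.15} \eqref{thm-7.3.15-2}) the two summands play exactly the opposite roles to the ones you assign: the $\vv_3$-term is the one carrying $\uu_{1,j}$ and cannot be discarded, while the $\vv_2$-term is the disposable correction. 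Your congruence $\vv_3\equiv v_2\vv_2^{p}/v_1^{p}$ is a congruence \emph{modulo the subcomodule} $E_{m+1}^1/(v_1^\infty)\subset N^2$: it says that the difference of the two terms lies in that subcomodule (that is how \autoref{def-up-1} uses its $p$-th power version), and it cannot be used to trade one term for the other, because the answers the Proposition asks for, namely $0$ and $jv_2\bbeta_{j+p-1/p}$, are themselves elements of that subcomodule --- your congruence discards exactly the information being computed.

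There are two further breakdowns. First, even on your own element the Cartan formula does not give $\wrr_{p^2}=0$: the operation $\wrr_{p^2}$ on $\vv_2^{j+p}$ is a sum over all distributions of the weight $p^2$ among the $j+p$ factors, not a sum of terms each containing a factor $\wrr_{p^2}(\vv_2)$, and the distribution in which $p$ factors each receive $\wrr_p$ contributes $\binom{j+p}{p}\wrr_p(\vv_2)^p\vv_2^j\doteq\binom{j+p}{p}v_1^p\vv_2^j$, so that $\wrr_{p^2}\bigl(v_2\vv_2^{j+p}/(pv_1^{p+1})\bigr)\doteq\binom{j+p}{p}\,v_2\vv_2^j/(pv_1)$, which is nonzero in general (for $0\le j<p$ Lucas gives $\binom{j+p}{p}\equiv 1$ mod $p$). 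The same misreading of the Cartan formula underlies your claim that $\wrr_{p^2}$ annihilates $B_{m+1}$, which fails for $\bbeta'_{i/i}$ with $i\ge p$; this matters because the ambiguity in $\ttheta_{1,j}$ is genuinely not harmless: the $\vv_3$-leading term alone has $\wrr_{p^2}=v_2\bbeta_j\neq 0$, whereas the assertion $\wrr_{p^2}(\ttheta_{1,j})=0$ is tied to the specific $\ttheta_{1,j}$ of \autoref{prop-ttheta-and-QuillenOp}. That inductive definition is what the paper's proof exploits and what any repair of your argument must engage: since $\ttheta_{1,j}=v_2^{-(p-1)}(\wrr_{p^2})^{p-1}(\ttheta_{p,j})$, \autoref{lem-iteration-quillen} converts $\wrr_{p^2}(\ttheta_{1,j})$ and $\wrr_p(\ttheta_{1,j})$ into unit multiples of $v_2^{-p+1}\wrr_{p^3}(\ttheta_{p,j})$ and $v_2^{-p+1}\wrr_{p^3-p^2+p}(\ttheta_{p,j})$, which are then evaluated on the explicit formula \eqref{def-up-1,j}; the delicate cancellations (including the binomial coefficients above) are handled automatically there. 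The fact that your $\wrr_p$ computation lands on the stated answer is an artifact, not a confirmation: $\wrr_p$ kills the $\vv_3$-term, so the correction term alone happens to carry that one operation.
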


\begin{proof}
By \autoref{lem-iteration-quillen} 
$\wrr_{p^{2}}(\ttheta_{1,j})$ is a unit multiple of $v_{2}^{-p+1} \wrr_{p^{3}}(\ttheta_{p,j})$, 
and we can check 
$\wrr_{p^{3}}(\ttheta_{p,j}) = 0$.  
Similarly, $\wrr_{p}(\ttheta_{1,j})$ is a unit multiple of $v_{2}^{-p+1} \wrr_{p^{3}-p^{2}+p}(\ttheta_{p,j})$, 
which can be computed by direct calculation.   
\end{proof}

\begin{prop}\label{comodulestructure-u}
We have
\[
\psi(\ttheta_{i,j}) \equiv 
\sum_{0 \le k <i} \wt_{1}^{kp^{2}} \otimes \frac{v_{2}^{k}\ttheta_{i-k,j}}{k!} 
\quad
\mod(v_{2}^{i}) .
\]
\end{prop}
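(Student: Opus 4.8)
The plan is to read the coaction off the Quillen operations through $\psi(\ttheta_{i,j})=\sum_{s\ge0}\wt_{1}^{s}\otimes\wrr_{s}(\ttheta_{i,j})$ and to evaluate each $\wrr_{s}(\ttheta_{i,j})$ modulo $(v_{2}^{i})$. So two things must be shown: that $\wrr_{kp^{2}}(\ttheta_{i,j})\equiv v_{2}^{k}\ttheta_{i-k,j}/k!$ for $0\le k<i$, and that $\wrr_{s}(\ttheta_{i,j})\equiv0$ modulo $(v_{2}^{i})$ for every other $s$ (for $s=kp^{2}$ with $k\ge i$ the surviving factor $v_{2}^{k}$ already lies in $(v_{2}^{i})$, so the real content is the non-multiples of $p^{2}$).

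For the main terms I would iterate the defining relation $\wrr_{p^{2}}(\ttheta_{i,j})=v_{2}\ttheta_{i-1,j}$ (valid for $2\le i\le p$, with $\wrr_{p^{2}}(\ttheta_{1,j})=0$ by \autoref{thm-uij}) and feed the result into \autoref{lem-iteration-quillen}. The iteration is clean because $v_{2}$ is essentially primitive in $G(m+1)$: a degree count in $G(m+1)=A(m+1)[\wt_{1}]$ shows $\eta_{R}(v_{2})=v_{2}$ for $m\ge2$ (the variable $\wt_{1}=t_{m+1}$ is too high-dimensional to occur), while for $m=1$, where $v_{2}=\vv_{1}$, the sole correction $\wrr_{1}(v_{2})=p$ is divisible by $p$; in both cases $\wrr_{p^{2}}(v_{2}x)\equiv v_{2}\wrr_{p^{2}}(x)$ modulo $p$. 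Hence $(\wrr_{p^{2}})^{\circ k}(\ttheta_{i,j})\equiv v_{2}^{k}\ttheta_{i-k,j}$, and since \autoref{lem-iteration-quillen} identifies $(\wrr_{p^{2}})^{\circ k}$ with $k!\,\wrr_{kp^{2}}$ modulo $p^{2}$ (and $k!$ is a unit for $k<i\le p$), this yields $\wrr_{kp^{2}}(\ttheta_{i,j})\equiv v_{2}^{k}\ttheta_{i-k,j}/k!$ up to terms divisible by $p$. Every discrepancy so produced is divisible by $p$, and such errors are harmless: by \autoref{cor-E2again} the target $U_{m+1}^{2}$ is a module over $A(m+1)/I_{2}=A(m+1)/(p,v_{1})$, so $\uu_{i,j}=\delta^{0}\delta^{1}(\ttheta_{i,j})$ is killed by $p$ and therefore $p\,\ttheta_{i,j}\in\ker\delta^{1}$. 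Since $\ttheta_{i,j}$ is only well defined up to $\ker\delta^{1}$ to begin with (the remark following \autoref{cor-E2again}), the $p$-divisible corrections are absorbed into this ambiguity and the coefficients $1/k!$ stand as asserted.

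It remains to rule out the operations $\wrr_{s}$ with $s$ not a multiple of $p^{2}$, and this is the step I expect to be the main obstacle. The mechanism is that the offending $\wt_{1}$-powers which occur in the coaction of the building blocks $\vv_{3},\vv_{2},\xi$ of $\ttheta_{p,j}$ are always weighted by high powers of $v_{2}$: reducing \autoref{prop-vv-and-llambda} modulo $(p,v_{1})$ and using $\eta_{R}(\llambda_{1})=\llambda_{1}+\wt_{1}$, one finds that the non-$p^{2}$-multiple contribution to $\psi(\vv_{3})$ carries a factor $v_{2}^{p^{m+1}}$, which vanishes modulo $(v_{2}^{i})$ because $p^{m+1}\ge p^{2}>p\ge i$. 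I would first settle the top case $i=p$ by pushing this analysis through the $p$-th powers $\vv_{3}^{p},\xi^{p}$ (where the Frobenius relation $\psi(y^{p})\equiv\psi(y)^{p}$ mod $p$ keeps the cross terms divisible by $p$) and the $v_{1}$-divided structure of \autoref{def-up-1}, and then propagate the vanishing downward in $i$: applying $\psi$ to $v_{2}\ttheta_{i,j}=\wrr_{p^{2}}(\ttheta_{i+1,j})$ and using the composition law $\wrr_{u}\wrr_{p^{2}}=\binom{u+p^{2}}{p^{2}}\wrr_{u+p^{2}}$ shows that $v_{2}\psi(\ttheta_{i,j})$, hence $\psi(\ttheta_{i,j})$, is supported on $\wt_{1}^{p^{2}}$-multiples whenever $\psi(\ttheta_{i+1,j})$ is. The delicate heart of the argument is exactly this $v_{2}$-adic bookkeeping: tracking the precise $v_{2}$-order of every off-diagonal term through the $p$-th powers and the recursion, and confirming that the low operations $\wrr_{s}$ with $0<s<p^{2}$ likewise contribute nothing modulo $(v_{2}^{i})$, since these are not reached by the downward composition argument and must be controlled by hand at the base case.
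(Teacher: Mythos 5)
Your proposal takes essentially the same route as the paper: the paper derives the diagonal terms from $k!\,\wrr_{kp^{2}}(\ttheta_{i,j})=v_{2}^{k}\ttheta_{i-k,j}$ (the recursion $\wrr_{p^{2}}(\ttheta_{i+1,j})=v_{2}\ttheta_{i,j}$ combined with \autoref{lem-iteration-quillen}), and settles everything else by reducing to the explicit top element, namely by computing $\psi(v_{2}^{p-i}\ttheta_{i,j})$ mod $(v_{2}^{p})$ via the equality $v_{2}^{p-i}\ttheta_{i,j}=(p-i)!\,\wrr_{(p-i)p^{2}}(\ttheta_{p,j})$. Your one-step downward induction through the composition law, together with the direct analysis of $\ttheta_{p,j}$ as base case, is just a repackaging of that same reduction to $i=p$, so the two arguments coincide in substance.
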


\begin{proof}
Roughly speaking, it follows from 
$k! \, \wrr_{kp^{2}}(\ttheta_{i,j}) = v_{2}^{k}\ttheta_{i-k,j}$
since 
$\wrr_{p^2}(\ttheta_{i+1,j}) = v_{2}\ttheta_{i,j}$. 
More precisely, 
it is enough to consider 
$\psi(v_{2}^{p-i}\ttheta_{i,j})$ mod $(v_{2}^{p})$ 
using the equality 
$v_{2}^{p-i}\ttheta_{i,j} = (p-i)! \, \wrr_{(p-i)p^{2}}(\ttheta_{p,j})$. 
\end{proof}

\begin{prop}\label{lem-u_{p/k}}
Define $\ttheta_{p/k}$ $(0<k\le p)$ by 
\[
\ttheta_{p/k} = 
\Frac{\vv_{3}^{p}}{pv_{1}^{k}}
- \Frac{v_{2}^{p}\vv_{2}^{p^2}}{pv_{1}^{p^{2}+k}}
+ \Frac{v_{2}^{p^{m+2}}\vv_{2}}{pv_{1}^{k+1}} .
\]
Then it is in $\Ext_{\Gamma(m+2)}^{0}(E_{m+1}^{1}/(v_{1}^{\infty}))$ 
and satisfies $\delta^{0}\delta^{1}(\ttheta_{p/k})=\uu_{p/k}$. 
Moreover, it is $G(m+1)$-invariant: 
we have
$\wrr_{j}(\ttheta_{p/k})=0$
for all $j \ge 1$. 
\end{prop}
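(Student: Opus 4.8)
The three assertions---membership in $\Ext_{\Gamma(m+2)}^{0}(E_{m+1}^{1}/(v_{1}^{\infty}))$, the image formula $\delta^{0}\delta^{1}(\ttheta_{p/k})=\uu_{p/k}$, and $G(m+1)$-invariance---I would attack in that order, the first two by close analogy with \autoref{def-up-1} and the last by a direct coaction computation, which is where the real work lies.

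\textbf{Membership and the image formula.} First I would reexpress the two correction terms through $\xi$. Reducing $\xi^{p}$ modulo $p$ by the freshman's dream and using $\vv_{2}\equiv v_{1}\llambda_{1}^{p}\pmod{(p,v_{1}^{p^{m+1}})}$ from \autoref{prop-vv-and-llambda}, I expect that $-\frac{v_{2}^{p}\vv_{2}^{p^{2}}}{pv_{1}^{p^{2}+k}}+\frac{v_{2}^{p^{m+2}}\vv_{2}}{pv_{1}^{k+1}}$ differs from $-\frac{\xi^{p}}{pv_{1}^{p^{2}+k}}$ only by terms of nonnegative $v_{1}$-exponent, which vanish in $E_{m+1}^{1}/(v_{1}^{\infty})$. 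Thus $\ttheta_{p/k}\equiv \frac{\vv_{3}^{p}}{pv_{1}^{k}}-\frac{\xi^{p}}{pv_{1}^{p^{2}+k}}$, and the computation of \autoref{def-up-1} (via \autoref{zeta-xi} and $\vv_{3}\equiv\zeta$ modulo $(p,v_{1})$) shows this lies in $\Ext_{\Gamma(m+2)}^{0}(E_{m+1}^{1}/(v_{1}^{\infty}))$; the $\Gamma(m+2)$-invariance is the same direct check. Since each correction term is the image of an element of $v_{1}^{-1}E_{m+1}^{1}$ and hence lies in $\ker\delta^{1}$, while $\uu_{p/k}=\delta^{0}\delta^{1}(\vv_{3}^{p}/pv_{1}^{k})$ by \eqref{elements-of-U}, this gives $\delta^{0}\delta^{1}(\ttheta_{p/k})=\uu_{p/k}$.

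\textbf{Reducing invariance to a few Quillen operations.} For the last assertion I would apply the $G(m+1)$-coaction $\psi$ and read off the coefficient of each $\wt_{1}^{j}$. Only $v_{1},v_{2}\in A(m+1)$ are visibly primitive, so the inputs I need are the coactions of $\vv_{2}$ and $\vv_{3}$. From \autoref{prop-vv-and-llambda} together with $\eta_{R}(\llambda_{1})=\llambda_{1}+\wt_{1}+\cdots$ one gets, modulo $p$, $\eta_{R}(\vv_{2})\equiv \vv_{2}+v_{1}\wt_{1}^{p}-v_{1}^{p}\wt_{1}$ and $\eta_{R}(\vv_{3})\equiv \vv_{3}+v_{2}\wt_{1}^{p^{2}}-v_{2}^{p^{m+1}}\wt_{1}$, the last summand being produced precisely by the $v_{2}^{p^{m+1}}$-term of $\zeta$. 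Consequently only $j$ that are powers of $p$ can contribute, and it suffices to settle $j=1,p,p^{2},p^{3}$.

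\textbf{The cancellation, and the main obstacle.} The leading cancellations are then clean: raising to the $p$th power modulo $p$, $\wrr_{p^{3}}(\vv_{3}^{p})\equiv v_{2}^{p}$ is killed against the second term (whose $\wrr_{p^{3}}$ contributes $-v_{2}^{p}v_{1}^{p^{2}}/pv_{1}^{p^{2}+k}=-v_{2}^{p}/pv_{1}^{k}$), while the distinguished summand $\wrr_{p}(\vv_{3}^{p})\equiv -v_{2}^{p^{m+2}}$ is killed against the third term $\frac{v_{2}^{p^{m+2}}\vv_{2}}{pv_{1}^{k+1}}$, whose $\wrr_{p}$ is $v_{2}^{p^{m+2}}/pv_{1}^{k}$; this is exactly why the third correction is present. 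The hard part is everything past these leading terms. The non-$p$-divisible cross terms of the $p$th powers feed into $\wrr_{ap^{2}}$ and $\wrr_{ap}$ after the division by $p$, and the surviving monomials (for instance $v_{2}\vv_{3}^{p-1}/v_{1}^{k}$ from the first term against $v_{2}^{p}\vv_{2}^{p^{2}-p}/v_{1}^{p^{2}-p+k}$ from the second) do \emph{not} match on the nose: one must first rewrite $\vv_{2},\vv_{3}$ through $\llambda_{1}$ by \autoref{prop-vv-and-llambda} and only then watch them coincide, organizing the binomial coefficients by \autoref{lem-iteration-quillen} and discarding every nonnegative-$v_{1}$-exponent term. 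I would also treat $m=1$ separately, where $\zeta$ carries no $v_{2}^{p^{m+1}}$ summand and the $\wt_{1}$-linear part of $\eta_{R}(\vv_{3})$ instead issues from the $p\llambda_{3}$ piece. Checking that \emph{every} $\wrr_{j}$, and not merely the leading ones, vanishes is the crux of the argument.
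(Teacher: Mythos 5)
Your plan is the paper's own: the paper proves membership by exactly the expansion you describe, using \autoref{prop-vv-and-llambda} and splitting into the cases $m=1$ and $m\ge 2$ (it does the expansion directly rather than detouring through $\xi$ and \autoref{def-up-1}; note also that \autoref{def-up-1} is stated for the denominator $p!\cdot pv_{1}$, so your citation really amounts to redoing its congruences with $pv_{1}^{k}$, which does work because $k\le p$). It then gets $\delta^{0}\delta^{1}(\ttheta_{p/k})=\uu_{p/k}$ from the observation that the correction terms lie in $\ker\delta^{1}$, just as you do, and disposes of invariance as ``direct calculations''---of which your cancellations at $\wt_{1}^{p^{3}}$ and $\wt_{1}^{p}$ are the correct core. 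However, two things in your write-up are genuinely off.

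First, your right-unit formula is wrong: in $\Gamma(m+1)$ one has $\eta_{R}(\vv_{2})\equiv \vv_{2}+v_{1}\wt_{1}^{p}-v_{1}^{p^{m+1}}\wt_{1}$ modulo $(p)$, not $-v_{1}^{p}\wt_{1}$; the linear term comes from the summand $-v_{1}^{p^{m+1}}\llambda_{1}$ in \autoref{prop-vv-and-llambda}, parallel to the $-v_{2}^{p^{m+1}}\wt_{1}$ in $\eta_{R}(\vv_{3})$ that you did record. This is not cosmetic: with your exponent the third term of $\ttheta_{p/k}$ contributes $-v_{2}^{p^{m+2}}v_{1}^{p-k-1}\wt_{1}/p$ to the coaction, which for $k=p$ retains a genuine $v_{1}$-denominator and would falsify $\wrr_{1}(\ttheta_{p/p})=0$; with the exponent $p^{m+1}$ the term has nonnegative $v_{1}$-exponent and dies. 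Second, the ``crux'' you point to is a phantom, because your vanishing criterion is incomplete: in $E_{m+1}^{1}/(v_{1}^{\infty})$ a term dies not only when its $v_{1}$-exponent is nonnegative but also when it carries no $p$ in the denominator, i.e.\ when it lies in $v_{1}^{-1}D_{m+1}^{0}$ (your own membership step already needs this clause, e.g.\ for $v_{2}^{p^{m+2}}\llambda_{2}/v_{1}^{k+1}$, which has negative $v_{1}$-exponent yet vanishes). Consequently every cross term of the $p$-th and $p^{2}$-th powers---for instance your $v_{2}\vv_{3}^{p-1}\wt_{1}^{p^{2}}/v_{1}^{k}$, where $\binom{p}{1}$ has already cancelled the single $p$ downstairs---is zero on the nose, and no matching of the three terms via $\llambda_{1}$-expansions is required. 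Only the pure $p$-th-power (Frobenius) contributions keep the $p$-denominator, and those are precisely the leading cancellations you performed; that is why the paper can dismiss the last assertion as routine direct calculation.
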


\begin{proof}
By \autoref{prop-vv-and-llambda}, 
modulo $E_{m+1}^{1}/(v_{1}^{\infty})$
\begin{align*}
\ttheta_{p/k}
& \equiv 
\frac{
v_{1}^{p}\llambda_{2}^{p^2} 
+ v_{2}^{p}\llambda_{1}^{p^3}}{p v_{1}^{k}}
- \frac{v_{2}^{p}\cdot v_{1}^{p^2}\llambda_{1}^{p^3}}{p v_{1}^{p^2+k}}
+ \frac{v_{2}^{p^{m+2}}\cdot v_{1}\llambda_{1}^{p}}{p v_{1}^{k+1}} \\
& \equiv 
\frac{v_{1}^{p}\llambda_{2}^{p^2}}{p v_{1}^{k}}
+ \frac{(p\llambda_{1})^{p^{m+2}}\cdot \llambda_{1}^{p}}{p v_{1}^{k}}
\equiv
0 
\end{align*}
for $m=1$, and 
\begin{align*}
\ttheta_{p/k}
& \equiv 
\frac{
v_{1}^{p}\llambda_{2}^{p^2} 
+ v_{2}^{p}\llambda_{1}^{p^3}
- v_{2}^{p^{m+2}}\llambda_{1}^{p}}{p v_{1}^{k}}
- \frac{v_{2}^{p}\cdot v_{1}^{p^2}\llambda_{1}^{p^3}}{p v_{1}^{p^2+k}}
+ \frac{v_{2}^{p^{m+2}}\cdot v_{1}\llambda_{1}^{p}}{p v_{1}^{k+1}}
\equiv
0 
\end{align*}
for $m \ge 2$.  
The second statement follows since 
all terms in $\ttheta_{p/k}$ except for the leading term are in $\ker\delta^{1}$. 
The last statement follows from direct calculations. 
\end{proof}

\section{The homotopy groups of \texorpdfstring{$T(m)_{(2)}$}{T(m)(2)}}

In this section we determine the homotopy groups of $T(m)_{(2)}$ below dimensions $p|\vv_{3}|$
by analyzing the Cartan-Eilenberg $E_{2}$-term of \autoref{eq-CESSpicture} for $j=2$. 
By \autoref{lem-structure-of-cokerrho} and \ref{lem-structure-of-B} we have

\begin{prop}\label{lem-B2free}
Below dimension $|\vv_{2}^{p^{2}+1}/v_{1}^{p^{2}}|$, 
the Cartan-Eilenberg $E_{2}$-term of \autoref{eq-CESSpicture} for $j=2$ satisfies 
$\tilde{E}_{2}^{s,0} = 0$
for $s \ge 2$, 
and $\tilde{E}_{2}^{1,0}$ is isomorphic to the $A(m+1)$-module generated by 
\[
\Big\{ \bbeta'_{i/t} \mid i \ge 2, 0<t \le \min (i-1,p) \Big\} 
\cup 
\Big\{ \bbeta_{p^{2}/t} \mid p <t \le p^{2} \Big\} .
\]
Note that $|\vv_{2}^{p^{2}+1}/v_{1}^{p^{2}}|$ is larger than $p|\vv_{3}|$ if $m>0$.  
\end{prop}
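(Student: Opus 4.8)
The plan is to evaluate the entire $t=0$ row of the Cartan--Eilenberg $E_{2}$-term described in \autoref{E2term-CESS}, working throughout below dimension $|\vv_{2}^{p^{2}+1}/v_{1}^{p^{2}}|$, which is exactly the range in which \autoref{lem-structure-of-B} guarantees that $B_{m+1}$ is $2$-free. By \autoref{E2term-CESS} the row has three parts: $\tilde{E}_{2}^{0,0}=\ker\rho_{*}$ (which the statement does not address and which I leave aside), $\tilde{E}_{2}^{1,0}=\coker\rho_{*}$, and $\tilde{E}_{2}^{s,0}\cong\CEE{s-1}{2}{B_{m+1}}$ for $s\ge2$. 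I will first dispose of the range $s\ge2$ and then compute $\coker\rho_{*}$ explicitly.

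For $s\ge2$ the only input needed is $2$-freeness. In the stated range \autoref{lem-structure-of-B} asserts that $B_{m+1}$ is $2$-free, {\ie} $\Tmbar{2}\otimes B_{m+1}$ is weak injective; hence $\Ext_{G(m+1)}^{n}(\Tmbar{2}\otimes B_{m+1})=0$ for every $n\ge1$. Taking $n=s-1\ge1$ gives $\tilde{E}_{2}^{s,0}=0$ for all $s\ge2$, which is the first assertion.

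For $s=1$ I combine \autoref{lem-structure-of-cokerrho} with the explicit description of $L_{2}(B_{m+1})$. With $j=2$ the former gives $\tilde{E}_{2}^{1,0}=\coker\rho_{*}\cong L_{2}(B_{m+1})/\bigl(A(m+1)\{\bbeta'_{i/i}\mid 0<i\le p\}\bigr)$. The key structural point is that $2$-freeness makes $L_{2}(B_{m+1})\cong\CEE{0}{2}{B_{m+1}}$ a free $A(m+1)$-module, so in this range it is free on $\{\bbeta'_{i/t}\mid i\ge1,\ 0<t\le\min(i,p)\}$ together with the boundary classes $\bbeta_{i/t}$ with $p<t\le p^{2}\le i<p^{2}+p$ (\autoref{lem-structure-of-B}). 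Since the $p$ classes $\bbeta'_{i/i}$ with $0<i\le p$ are members of this free basis, the submodule $A(m+1)\{\bbeta'_{i/i}\mid 0<i\le p\}$ is precisely the free summand on them, and the quotient is free on the complementary basis. Deleting $\bbeta'_{i/i}$ from the $\bbeta'$-part leaves exactly $\{\bbeta'_{i/t}\mid i\ge2,\ 0<t\le\min(i-1,p)\}$: for $2\le i\le p$ only the top class $\bbeta'_{i/i}$ disappears, while for $i>p$ no class is removed and $\min(i-1,p)=p$.

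It remains to truncate the boundary classes by the dimension hypothesis and to record the final remark. Rewriting the condition $i|\vv_{2}|-t|v_{1}|<(p^{2}+1)|\vv_{2}|-p^{2}|v_{1}|$ as $(i-p^{2}-1)|\vv_{2}|<(t-p^{2})|v_{1}|$, I observe that for $i\ge p^{2}+1$ the left-hand side is nonnegative while the right-hand side is nonpositive (as $t\le p^{2}$), so no such generator lies below the bound; for $i=p^{2}$ the left-hand side equals $-|\vv_{2}|$, which is genuinely smaller than $(t-p^{2})|v_{1}|$ since $|\vv_{2}|\gg|v_{1}|$. Thus only $\bbeta_{p^{2}/t}$ with $p<t\le p^{2}$ survives among the boundary classes, and since these have $\vv_{2}$-exponent $p^{2}$ they lie in no $A(m+1)$-multiple of the $\bbeta'_{i/i}$ with $i\le p$ and pass untouched to the quotient. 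Assembling the two parts yields the asserted generating set for $\tilde{E}_{2}^{1,0}$. Finally, the closing sentence is an elementary degree check: with $|v_{i}|=2(p^{i}-1)$ one computes $|\vv_{2}^{p^{2}+1}/v_{1}^{p^{2}}|-p|\vv_{3}|=2(p^{m+2}-p^{3}+p-1)$, which is positive for $m\ge1$. The whole argument is front-loaded onto \autoref{lem-structure-of-B}: once $2$-freeness and the freeness of $L_{2}(B_{m+1})$ are granted, both the vanishing for $s\ge2$ and the quotient computation for $s=1$ are immediate, and the only remaining work is the bookkeeping above.
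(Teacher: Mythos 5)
Your handling of $s\ge2$ and the closing dimension count are correct and follow the route the paper intends: $2$-freeness of $B_{m+1}$ from \autoref{lem-structure-of-B} means precisely that $\Tmbar{2}\otimes B_{m+1}$ is weak injective over $G(m+1)$, so $\CEE{s-1}{2}{B_{m+1}}=0$ for $s-1\ge1$, and your computation $|\vv_{2}^{p^{2}+1}/v_{1}^{p^{2}}|-p|\vv_{3}|=2(p^{m+2}-p^{3}+p-1)>0$ for $m\ge1$ is right. The truncation of the boundary classes $\bbeta_{i/t}$ to $i=p^{2}$ by the dimension hypothesis is also fine.

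The $s=1$ step, however, contains a genuine gap. Your ``key structural point'' --- that $2$-freeness makes $L_{2}(B_{m+1})\cong\CEE{0}{2}{B_{m+1}}$ a \emph{free} $A(m+1)$-module on the listed classes --- is false. $2$-freeness is a statement about $\Ext_{G(m+1)}$ of $\Tmbar{2}\otimes B_{m+1}$; it implies nothing about freeness over $A(m+1)$, and $L_{2}(B_{m+1})$ cannot be free: it sits inside $B_{m+1}\subset E_{m+1}^{1}/(v_{1}^{\infty})$, so every element is annihilated by a power of $p$ and a power of $v_{1}$. Moreover the listed classes are not independent, since $v_{1}\bbeta'_{i/t}=\bbeta'_{i/t-1}$. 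Consequently $A(m+1)\{\bbeta'_{i/i}\mid 0<i\le p\}$ is not a ``free summand on them'': as a submodule it contains \emph{every} $\bbeta'_{i/t}$ with $0<t\le i\le p$, so under your literal-quotient reading of \autoref{lem-structure-of-cokerrho} the classes $\bbeta'_{i/t}$ with $2\le i\le p$, $t\le i-1$ --- exactly the ones you claim survive as the ``complementary basis'' --- would all map to zero. In other words, ``deleting the top class $\bbeta'_{i/i}$'' is not what quotienting by the submodule it generates does, and your bookkeeping reproduces the proposition's generating set without actually deriving it. The passage from the quotient in \autoref{lem-structure-of-cokerrho} to the generating set asserted in the proposition is precisely the delicate point: it depends on the $v_{1}$-divisibility structure of $L_{2}(B_{m+1})$ and on how the relevant cosets are identified, which is the content of \cite[Corollary 4.3]{NR:GeneralizedImageJ} together with \cite[Theorem 6.1]{NR:OnthePrimitiveBeta}. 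A correct write-up must engage with that module structure (the paper itself offers no independent argument --- it cites the two lemmas); the free-module shortcut cannot substitute for it.
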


Thus our remaining task is to determine the structure of 
\[
\tilde{E}_{2}^{s,t} \cong \CEE{s}{2}{U_{m+1}^{t+1}} 
\quad
\mbox{for $t \ge 1$.}
\]
Since this is a certain suspension of $\tilde{E}_{2}^{s,1}$
({\ie} tensored object with some power of $\bb_{2,0}$ and $\hh_{2,0}$), 
we may consider only for $\tilde{E}_{2}^{s,1}$. 
Below dimension $p|\vv_{3}|$, 
define the $v_{2}$-torsion free $A(m+1)$-submodule $U^{0}$ of $v_{2}^{-1}U_{m+1}^{2}$ 
by adjoining the elements
\[
\left\{ v_{2}^{-i}\uu_{i,j} \mid 0<i \le p, j\ge 0 \right\}
\cup 
\left\{ v_{2}^{-p}\uu_{p/k} \mid 2 \le k \le p \right\}
\]
to $U_{m+1}^{2}$.  
Note that $U^{0}$ is a comodule 
since the congruence in \autoref{comodulestructure-u} is modulo $v_{2}^{i}$ 
and the ignored elements have non-negative $v_{2}$-exponent after applying $v_{2}^{-i}$.  
We also define the quotient comodule $U^{1}$ by the following {\SES}: 
\setcounter{equation}{\value{thm}}
\begin{equation}\label{eq-USES}
0 \longrightarrow U_{m+1}^{2} \longrightarrow U^{0} \longrightarrow U^{1} \longrightarrow 0  
\end{equation}
\setcounter{thm}{\value{equation}}%

The Quillen operations on $v_{2}^{-p}\uu_{p/k} \in U^{0}$ are trivial
by \autoref{lem-u_{p/k}}.  
The behavior of Quillen operations on $v_{2}^{-i}\uu_{i,j} \in U^{0}$ follows from
\autoref{prop-ttheta-and-QuillenOp}, 
and it is demonstrated in \eqref{diagram-elements-R0} for $p=5$, 
where each diagonal arrow represents the action of
$\wrr_{p^{2}}$ up to unit scalar multiplication and the elements in
the rightmost column are out of our range except for $j=0$.
\setcounter{equation}{\value{thm}}
\begin{equation}\label{diagram-elements-R0}
\begin{split}
\xymatrix@=10pt{
\vdots 
    & \vdots 
        & \vdots 
            & \vdots 
                & \vdots \\
\uu_{1,j} 
    & \uu_{2,j} 
        & \uu_{3,j} 
            & \uu_{4,j} 
                & \uu_{5,j} \\
v_{2}^{-1}\uu_{1,j} 
    & v_{2}^{-1}\uu_{2,j} \ar[ul] 
        & v_{2}^{-1}\uu_{3,j} \ar[ul] 
            & v_{2}^{-1}\uu_{4,j} \ar[ul] 
                & v_{2}^{-1}\uu_{5,j} \ar[ul] \\
    & v_{2}^{-2}\uu_{2,j} \ar[ul] 
        & v_{2}^{-2}\uu_{3,j} \ar[ul] 
            & v_{2}^{-2}\uu_{4,j} \ar[ul] 
                & v_{2}^{-2}\uu_{5,j} \ar[ul] \\
    &   & v_{2}^{-3}\uu_{3,j} \ar[ul] 
            & v_{2}^{-3}\uu_{4,j} \ar[ul] 
                & v_{2}^{-3}\uu_{5,j} \ar[ul]   \\
    &   &   & v_{2}^{-4}\uu_{4,j} \ar[ul] 
                & v_{2}^{-4}\uu_{5,j} \ar[ul] \\
    &   &   &   & v_{2}^{-5}\uu_{5,j} \ar[ul] 
}
\end{split}
\end{equation}
\setcounter{thm}{\value{equation}}%

\begin{prop}\label{Ext-R0}
$U^{0}$ is $2$-free, 
and we have an isomorphism of $A(m+1)$-modules 
\begin{align*}
\CEE{0}{2}{U^{0}}
& \cong 
A(m+1) \otimes \left\{ v_{2}^{-1}\uu_{1,j}, v_{2}^{-p}\uu_{p/k} \mid j \ge 0, 2 \le k \le p \right\}.
\end{align*}
\end{prop}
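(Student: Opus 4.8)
The plan is to establish $2$-freeness of $U^0$ by exhibiting an explicit weak injective comodule into which $U^0$ embeds with the correct $\Ext^0$, and then read off the $\Ext^0$ computation from the structure of the generators under the Quillen operation $\wrr_{p^2}$ displayed in \eqref{diagram-elements-R0}. The key observation is that $\Ext^0_{G(m+1)}$ computes the $G(m+1)$-primitives, which by the isomorphism $(c\otimes 1)\psi \colon L_j(M)\to \Ext^0_{G(m+1)}(\Tmbar{j}\otimes M)$ recalled before \autoref{lem-structure-of-cokerrho} reduces to identifying which elements are annihilated by all $\wrr_n$ for $n\ge p^j = p^2$. So the heart of the matter is a kernel computation for the Quillen operations on $U^0$.

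First I would analyze the action of $\wrr_{p^2}$ on the generators $v_2^{-i}\uu_{i,j}$. By \autoref{prop-ttheta-and-QuillenOp} and \autoref{comodulestructure-u}, $\wrr_{p^2}(\ttheta_{i+1,j}) = v_2\ttheta_{i,j}$ up to unit scalar, so in $U^0$ we have $\wrr_{p^2}(v_2^{-(i+1)}\uu_{i+1,j})$ equal to a unit multiple of $v_2^{-i}\uu_{i,j}$. This is precisely the pattern of the diagonal arrows in \eqref{diagram-elements-R0}: each column (fixed $j$, varying $v_2$-exponent) forms a chain under $\wrr_{p^2}$ terminating at the bottom element $v_2^{-i}\uu_{i,j}$ with $i$ maximal, mapping upward to $v_2^{-1}\uu_{1,j}$ and finally to $\uu_{1,j}$. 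The only elements surviving in $\ker\wrr_{p^2}$ are the topmost survivors, namely $v_2^{-1}\uu_{1,j}$, since $\wrr_{p^2}(v_2^{-1}\uu_{1,j})$ involves $\wrr_{p^2}(\ttheta_{1,j})=0$ by \autoref{thm-uij}. For the elements $v_2^{-p}\uu_{p/k}$, \autoref{lem-u_{p/k}} gives that $\ttheta_{p/k}$ is $G(m+1)$-invariant, so $\wrr_n(v_2^{-p}\uu_{p/k})=0$ for all $n\ge 1$, placing these directly in the kernel.

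Next I would verify that no higher Quillen operations $\wrr_n$ with $n\ge p^2$, $n\ne p^2$, introduce new relations or kill the claimed survivors. The elements $v_2^{-1}\uu_{1,j}$ must be checked against $\wrr_{kp^2}$ for $k\ge 2$; by \autoref{lem-iteration-quillen} these are iterates of $\wrr_{p^2}$ up to a unit modulo $p^2$, and since $\wrr_{p^2}(v_2^{-1}\uu_{1,j})=0$ already, the higher operations vanish too. I must also confirm that operations $\wrr_n$ for $p^2 < n < 2p^2$ not of the form $kp^2$ vanish on these generators; this follows from the grading, since the only nonzero Quillen operations on the $\ttheta_{i,j}$ are $\wrr_{p^2}$ and $\wrr_p$ (the latter lowering below our range by \autoref{thm-uij}) together with their composites. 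Combining, the set $\{v_2^{-1}\uu_{1,j}, v_2^{-p}\uu_{p/k}\}$ generates $L_2(U^0)$ freely over $A(m+1)$, which is the asserted $\Ext^0$.

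\textbf{The hard part} will be establishing $2$-freeness rather than the $\Ext^0$ identification itself. Being $2$-free means $U^0$ admits a length-$2$ weak injective resolution with controlled Poincaré series; the natural candidate is built from $v_2^{-1}U_{m+1}^2$ via \eqref{eq-USES}, but one must check that inverting $v_2$ and adjoining the chosen negative-exponent generators produces a comodule whose higher $\Ext$ vanishes in the relevant range. The subtlety is bookkeeping the $v_2$-exponents against the range restriction ``below dimension $p|\vv_3|$'': the rightmost column of \eqref{diagram-elements-R0} lies outside the range except at $j=0$, so truncation effects at the boundary of the range must be handled carefully to ensure the chains close up as depicted and no boundary generator spuriously survives. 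I expect the cleanest route is to compare Poincaré series, showing that $g(U^0)$ matches $g(A(m+1))$ times the series of the claimed $\Ext^0$ generators plus a $\wrr_{p^2}$-image contribution, thereby forcing both $2$-freeness and the $\Ext^0$ computation simultaneously.
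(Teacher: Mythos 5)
Your proposal is correct and is essentially the paper's own argument: identify $\CEE{0}{2}{U^{0}}$ with $L_{2}(U^{0})$ via Lemma 1.12, read the kernel of the Quillen operations off the chains in \eqref{diagram-elements-R0} together with the $G(m+1)$-invariance of $\ttheta_{p/k}$, and force $2$-freeness by the Poincar\'e series identity $g(U^{0}) = g(\Ext^{0}(U^{0}))/(1-x^{p^{2}})$, which yields $g(\Tmbar{2}\otimes U^{0}) = g(\Ext^{0}(\Tmbar{2}\otimes U^{0}))\cdot g(G(m+1)/I)$. The one step you assert rather than prove --- that every $\wrr_{n}$ with $n\ge p^{2}$ kills $v_{2}^{-1}\uu_{1,j}$ --- is settled in the paper not by iterating \autoref{lem-iteration-quillen} plus grading, but by writing out the coaction $\psi(\vv_{2}^{j}\vv_{3}/pv_{1}) = \vv_{2}^{j}(\vv_{3}+v_{2}\wt_{1}^{p^{2}}-v_{2}^{p^{m+1}}\wt_{1})/pv_{1}$, so that only $\wrr_{1}$ and $\wrr_{p^{2}}$ can act nontrivially on the relevant representative, and their values ($v_{2}^{p^{m+1}-1}\bbeta_{j}$ and $\bbeta_{j}$ after dividing by $v_{2}$) lie in $\ker\delta^{1}$ and hence vanish in $U_{m+1}^{2}$.
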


\begin{proof}
By 
Lemma 1.12, 
$\CEE{0}{2}{U^{0}}$ is additively isomorphic to 
\[
L_{2}(U^{0}) = \bigcap_{\ell \ge p^2} \ker \wrr_{\ell}.
\] 
In \eqref{diagram-elements-R0} the only possible elements with trivial action of $\wrr_{p^2}$
are $v_{2}^{-1}\uu_{1,j}$.    
Note that 
\[
\wrr_{\ell}(v_{2}^{-1}\uu_{1,j}) = \delta^{0}\delta^{1}(v_{2}^{-1}\wrr_{\ell}(\ttheta_{1,j}))
\]
and 
$v_{2}^{-1}\wrr_{\ell}(\ttheta_{1,j}) = 0$ 
for $\ell \neq 1, p^{2}$ 
because 
\begin{align*}
\psi\left(\Frac{\vv_{2}^{j}\vv_{3}}{pv_{1}}\right)
& = 
\Frac{\vv_{2}^{j}(\vv_{3}+v_{2}\wt_{1}^{p^2}-v_{2}^{p^{m+1}}\wt_{1})}{pv_{1}}.
\end{align*}
Indeed, we have 
$\wrr_{\ell}(v_{2}^{-1}\uu_{1,j}) = 0$
even for $\ell = 1$ or $p^{2}$ because  
\[
v_{2}^{-1}\wrr_{1}(\ttheta_{1,j}) 
= v_{2}^{p^{m+1}-1}\bbeta_{j}
\quad
\mbox{and}
\quad 
v_{2}^{-1}\wrr_{p^2}(\ttheta_{1,j}) = \bbeta_{j}
\]
are in $\ker\delta^{1}$. 
Thus all Quillen operations on $v_{2}^{-1}\uu_{1,j}$ are trivial.  
Note that it is also shown that there is a bijection between 
$\CEE{0}{2}{U^{0}}$ and $\Ext_{G(m+1)}^{0}(U^{0})$.

The diagram \eqref{diagram-elements-R0} also suggests the equality of Poincar\'{e} series 
\begin{align*}
g(U^{0}) & = 
\frac{g(\Ext_{}^{0}(U^{0}))}{1-x^{p^2}} 
\qquad
\mbox{where}
\quad 
x=t^{|\vv_{1}|}
\end{align*}
and we have 
\begin{align*}
g(\Tmbar{2} \otimes U^{0})
& = g(U^{0}) \cdot \Frac{1-x^{p^2}}{1-x}
= \Frac{g(\Ext^{0}(U^{0}))}{1-x}  \\
& = g(\Ext^{0}(U^{0})) \cdot g(G(m+1)/I)   \\
& = g(\Ext^{0}(\Tmbar{2} \otimes U^{0})) \cdot g(G(m+1)/I) 
\end{align*}
which means that $U^{0}$ is 2-free. 
\end{proof}

\begin{prop}\label{Ext-R1}
$U^{1}$ is $2$-free, 
and we have an isomorphism of $A(m+1)$-modules 
\begin{align*}
\CEE{0}{2}{U^{1}}
& \cong
A(m+1)/I_{3} \otimes \left\{ \uu_{i,j}/v_{2} \mid i \ge 1, j \ge 0  \right\}.
\end{align*}
\end{prop}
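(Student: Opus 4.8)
The plan is to imitate the proof of \autoref{Ext-R0}: first identify the invariants $\CEE{0}{2}{U^1}\cong L_2(U^1)=\bigcap_{\ell\ge p^2}\ker\wrr_\ell$ by hand using the staircase \eqref{diagram-elements-R0} passed to the quotient $U^1=U^0/U_{m+1}^2$ of \eqref{eq-USES}, and then confirm $2$-freeness by a Poincaré series computation. The mechanism I would exploit is that the quotient map kills every class of non-negative $v_2$-exponent: whereas in $U^0$ the operation $\wrr_{p^2}$ moves $v_2^{-s}\uu_{i,j}$ diagonally to $v_2^{-s+1}\uu_{i-1,j}$ (see \autoref{prop-ttheta-and-QuillenOp} and the diagram), in $U^1$ the bottom class of each column becomes a new invariant, and this is what manufactures the asserted generators $\uu_{i,j}/v_2$.

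Concretely, I would first check that for every $i$ and $j$ the bottom class $\uu_{i,j}/v_2 = v_2^{-1}\uu_{i,j}$ lies in $L_2(U^1)$: indeed $\wrr_{p^2}(v_2^{-1}\uu_{i,j}) = \uu_{i-1,j}$, which has non-negative $v_2$-exponent and hence vanishes in $U^1$ (for $i=1$ it already vanishes by \autoref{thm-uij}), and by \autoref{comodulestructure-u} the coaction on $\ttheta_{i,j}$ involves only powers of $\wt_1^{p^2}$, so the only operations that could be nonzero are the $\wrr_{kp^2}$, which are controlled by \autoref{lem-iteration-quillen}. Since $v_2\cdot v_2^{-1}\uu_{i,j}=\uu_{i,j}=0$ in $U^1$ and $U^1$ is killed by $I_2$, each of these classes generates a copy of $A(m+1)/I_3$. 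I would then rule out further invariants from the $\uu_{i,j}$: for $s\ge 2$ one has $\wrr_{p^2}(v_2^{-s}\uu_{i,j}) = v_2^{-s+1}\uu_{i-1,j}\neq 0$ in $U^1$, so the staircase is injective above its bottom row and contributes nothing else to the kernel.

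The main obstacle is the $v_2^{-p}\uu_{p/k}$ classes. By \autoref{lem-u_{p/k}} the representatives $\ttheta_{p/k}$ are $G(m+1)$-invariant, so the entire tower $v_2^{-p}\uu_{p/k},\dots,v_2^{-1}\uu_{p/k}$ consists of elements of $L_2(U^1)$, and a priori these would enlarge $\CEE{0}{2}{U^1}$ beyond the stated answer (note they are killed only by $v_2^{p}$, not by $v_2$, so they do not match the $A(m+1)/I_3$ form). The delicate point of the proof is therefore to show that, below dimension $p|\vv_3|$, this tower produces no independent generators: I expect this to follow from a $v_2$-torsion relation that expresses the $v_2^{-s}\uu_{p/k}$ in terms of the $v_2^{-1}\uu_{i,j}$ inside $U^1$, obtained by comparing the reduced expansions of $\ttheta_{p/k}$ and the $\ttheta_{i,j}$ modulo $E_{m+1}^1/(v_1^{\infty})$ as in \autoref{lem-u_{p/k}}, together with the degree bound. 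Pinning down this comparison is where the real work lies.

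Finally, having determined $\CEE{0}{2}{U^1}$, I would establish $2$-freeness exactly as in \autoref{Ext-R0}. Using the short exact sequence \eqref{eq-USES} I would write $g(U^1)=g(U^0)-g(U_{m+1}^2)$, read off $g(\CEE{0}{2}{U^1})$ from the generators $\uu_{i,j}/v_2$, and verify the factorization
\[
g(\Tmbar{2}\otimes U^1) = g(\CEE{0}{2}{U^1})\cdot g(G(m+1)/I),
\]
which is the criterion for $2$-freeness used for $U^0$. The column-by-column decomposition implied by \autoref{comodulestructure-u} should make this bookkeeping routine once the invariants have been counted correctly, so the entire difficulty is concentrated in the reconciliation of the $v_2^{-p}\uu_{p/k}$-tower described above.
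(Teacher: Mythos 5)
Your treatment of the staircase is exactly the paper's proof: pass \eqref{diagram-elements-R0} to the quotient, note that $\wrr_{p^{2}}$ kills the classes $\uu_{i,j}/v_{2}$ (their images $\uu_{i-1,j}$ have nonnegative $v_{2}$-exponent and so vanish in $U^{1}$) and is injective on everything below them, then rerun the Poincar\'e series argument of \autoref{Ext-R0}. The genuine issue is the one you isolate in your third paragraph, and there your diagnosis is correct but your proposed cure cannot work. The towers $\uu_{p/k}/v_{2}^{s}$ ($1\le s\le p$, $2\le k\le p$) do lie in $L_{2}(U^{1})$, in dimensions below $p|\vv_{3}|$, by \autoref{lem-u_{p/k}}. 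But no relation can express them in terms of the $\uu_{i,j}/v_{2}$: an $A(m+1)$-submodule generated by elements killed by $v_{2}$ is itself killed by $v_{2}$, while $v_{2}\cdot(\uu_{p/k}/v_{2}^{p})=\uu_{p/k}/v_{2}^{p-1}\neq 0$ in $U^{1}$ because $v_{2}^{-p+1}\uu_{p/k}\notin U_{m+1}^{2}$. Indeed the staircase classes and the tower classes span complementary subcomodules of $U^{1}$ (the latter with trivial coaction by \autoref{lem-u_{p/k}}), so the towers contribute to $\CEE{0}{2}{U^{1}}$ independently, in every range of dimensions. The step you defer as ``where the real work lies'' is not merely hard but impossible.

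What your analysis has actually uncovered is that the proposition as printed is missing a summand: below dimension $p|\vv_{3}|$ one has
\[
\CEE{0}{2}{U^{1}}
\cong
A(m+1)/I_{3}\otimes\bigl\{\uu_{i,j}/v_{2}\bigr\}
\;\oplus\;
A(m+1)/(p,v_{1},v_{2}^{p})\otimes\bigl\{\uu_{p/k}/v_{2}^{p}\mid 2\le k\le p\bigr\},
\]
mirroring the tower summand that \autoref{Ext-R0} records for $U^{0}$; the paper's own proof overlooks it (its diagram omits the $\uu_{p/k}$ altogether), so on this point your blind reading is sharper than the printed argument. The omission is harmless downstream, which is presumably why it went unnoticed: in the four-term exact sequence in the proof of \autoref{lem-U2free}, the extra summand is precisely the image of the tower part of $\CEE{0}{2}{U^{0}}$ (the kernel there, $v_{2}^{p}\cdot A(m+1)\{v_{2}^{-p}\uu_{p/k}\}$, is accounted for by the generators $\uu_{p/k}$ of $\tilde{E}_{2}^{0,1}$), so it cancels from the cokernel $\tilde{E}_{2}^{1,1}$; exactness of that sequence independently forces the corrected statement, since otherwise $v_{2}^{-p}\uu_{p/k}$ would have to come from $\CEE{0}{2}{U_{m+1}^{2}}$, which it does not. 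Likewise the towers' contributions to $\CEE{s}{2}{U^{1}}$ for $s\ge 1$ all carry a factor $\wt_{1}^{p^{2}}$, whose dimension $p^{2}|\vv_{1}|$ pushes them above $p|\vv_{3}|$ for $m>0$, so $2$-freeness in the stated range---and your Poincar\'e series check, once $\Ext^{0}$ is corrected---survives. In short: keep your staircase argument and your identification of the obstacle, but complete the proof by enlarging the answer, not by trying to reconcile the towers away.
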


\begin{proof}
The analogous diagram to \eqref{diagram-elements-R0} for $p=5$ is as follows: 
\[
\xymatrix@=10pt{
\uu_{1,j}/v_{2} & \uu_{2,j}/v_{2} & \uu_{3,j}/v_{2} & \uu_{4,j}/v_{2} & \uu_{5,j}/v_{2}  \\
 & \uu_{2,j}/v_{2}^{2} \ar[ul] & \uu_{3,j}/v_{2}^{2} \ar[ul] & \uu_{4,j}/v_{2}^{2} \ar[ul] & \uu_{5,j}/v_{2}^{2} \ar[ul] & \\
 & & \uu_{3,j}/v_{2}^{3} \ar[ul] & \uu_{4,j}/v_{2}^{3} \ar[ul] & \uu_{5,j}/v_{2}^{3} \ar[ul]  \\
 & & & \uu_{4,j}/v_{2}^{4} \ar[ul] & \uu_{5,j}/v_{2}^{4} \ar[ul] \\
 & & & & \uu_{5,j}/v_{2}^{5} \ar[ul] 
}
\]
In this case $\Ext^{0}$ is generated by the elements in the top row. 
The 2-freeness of $U^{1}$ is similarly shown to $U^{0}$.  
\end{proof}

\begin{prop}\label{lem-U2free}
Below dimension $p|\vv_{3}|$, 
the Cartan-Eilenberg $E_{2}$-term of \autoref{eq-CESSpicture} for $j=2$ satisfies 
\[
\tilde{E}_{2}^{s,*+1}
\cong 
E(\hh_{2,0}) \otimes P(\bb_{2,0}) \otimes \CEE{s}{2}{U_{m+1}^{2}}
\]
and
\begin{align*}
\tilde{E}_{2}^{s,1}
& = \CEE{s}{2}{U_{m+1}^{2}}   \\
& \cong 
\begin{cases}
A(m+1)/I_{2} \otimes \left\{ \uu_{1,i}, \uu_{p/k} \mid i \ge 0, 2 \le k \le p \right\}
     & \mbox{for $s=0$,}   \vspace{5pt}\\
A(m+2)/I_{3} \otimes \left\{ \ggamma_{\ell} \mid \ell \ge 2 \right\}
     & \mbox{for $s=1$,}       \vspace{5pt}\\
0    & \mbox{for $s \ge 2$}
\end{cases}
\end{align*}
where
$\ggamma_{\ell} = \delta^{2}\left( \uu_{\ell,0}/v_{2} \right)$
and $\delta^{2}$ is the connecting homomorphism associated to \eqref{eq-USES}. 
The operators behave as if they had bidegree 
$\hh_{2,0} \in \tilde{E}_{2}^{0,1}$
and
$\bb_{2,0} \in \tilde{E}_{2}^{0,2}$. 
\end{prop}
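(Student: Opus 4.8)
The plan is to reduce the whole computation to the single Cartan-Eilenberg column $\CEE{s}{2}{U_{m+1}^2}$ and then to extract that column from the short exact sequence \eqref{eq-USES}.

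First I would dispose of the $E(\hh_{2,0})\otimes P(\bb_{2,0})$ factor. By \eqref{cess-E2-t>0} we have $\tilde E_2^{s,t}\cong\CEE{s}{2}{U_{m+1}^{t+1}}$ for $t\ge1$, so $\tilde E_2^{s,*+1}\cong\CEE{s}{2}{U_{m+1}^{*+2}}$. \autoref{cor-E2again} gives, below dimension $p|\vv_3|$, an isomorphism of $G(m+1)$-comodules $U_{m+1}^{*+2}\cong E(\hh_{2,0})\otimes P(\bb_{2,0})\otimes U_{m+1}^2$ in which $\hh_{2,0}$ and $\bb_{2,0}$ are $G(m+1)$-invariant (their classes are carried by $\wt_2=t_{m+2}$ and $\wt_2^{p}$, on which the operations $\wrr_j$ act trivially). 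Since tensoring a comodule with an invariant class commutes with $\Ext_{G(m+1)}^s(\Tmbar{2}\otimes-)$, the two classes factor out of the Ext computation, yielding the first displayed isomorphism and reducing the problem to $\CEE{s}{2}{U_{m+1}^2}$; the recorded bidegrees $|\hh_{2,0}|=(1,|\wt_2|)$ and $|\bb_{2,0}|=(2,|\wt_2^{p}|)$ are precisely those making $\hh_{2,0}\in\tilde E_2^{0,1}$ and $\bb_{2,0}\in\tilde E_2^{0,2}$.

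Next I would feed \eqref{eq-USES} into the long exact sequence for $\CEE{*}{2}{-}$. By \autoref{Ext-R0} and \autoref{Ext-R1} both $U^0$ and $U^1$ are $2$-free, hence $\CEE{s}{2}{U^0}=\CEE{s}{2}{U^1}=0$ for $s\ge1$. The long exact sequence therefore collapses to
\[
0\to\CEE{0}{2}{U_{m+1}^2}\to\CEE{0}{2}{U^0}\xrightarrow{\rho_*}\CEE{0}{2}{U^1}\xrightarrow{\delta^2}\CEE{1}{2}{U_{m+1}^2}\to0
\]
together with $\CEE{s}{2}{U_{m+1}^2}=0$ for $s\ge2$. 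Thus the $s\ge2$ vanishing is immediate, $\CEE{0}{2}{U_{m+1}^2}=\ker\rho_*$, and $\CEE{1}{2}{U_{m+1}^2}\cong\coker\rho_*$ via $\delta^2$, which is exactly what produces the classes $\ggamma_\ell=\delta^2(\uu_{\ell,0}/v_2)$. It then remains to compute $\rho_*$, induced by the quotient $U^0\to U^1$, on the generators supplied by \autoref{Ext-R0} and \autoref{Ext-R1}. Using the identification $\Ext^0\cong L_2(-)$ (Lemma 1.12), $\rho_*$ carries $v_2^{-1}\uu_{1,j}$ to the generator $\uu_{1,j}/v_2$ of $\CEE{0}{2}{U^1}$, while the $v_2^{-p}\uu_{p/k}$ are controlled by their $G(m+1)$-invariance from \autoref{lem-u_{p/k}}. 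Reading off the kernel, the classes killed by $\rho_*$ are the $v_2$-multiples $\uu_{1,i}=v_2\cdot v_2^{-1}\uu_{1,i}$ and $\uu_{p/k}=v_2^{p}\cdot v_2^{-p}\uu_{p/k}$ (both lie in $U_{m+1}^2$, hence vanish in $U^1$), recovering $A(m+1)/I_2\{\uu_{1,i},\uu_{p/k}\}$; the cokernel is spanned by the generators $\uu_{i,j}/v_2$ with $i\ge2$, and repackaging the $\vv_2^{j}=v_{m+2}^{j}$ factors via $\uu_{i,j}=v_{m+2}^{j}\uu_{i,0}$ rewrites this as $A(m+2)/I_3\{\ggamma_\ell\mid\ell\ge2\}$.

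The main obstacle will be the honest bookkeeping in this last step: pinning down $\im\rho_*$ exactly, in particular controlling the contribution of the invariant classes $v_2^{-p}\uu_{p/k}$ and the precise $v_2$-divisibility (torsion orders) of the images in $\CEE{0}{2}{U^1}$, and then checking that the change of coefficient ring from $A(m+1)/I_3$ with an extra $j$-index to $A(m+2)/I_3$ is an isomorphism. All of these identifications must be verified below dimension $p|\vv_3|$, where the $\uu_{p/k}$ sit at the very edge of the range; it is precisely this dimension count that keeps the answer finite and matches the stated module.
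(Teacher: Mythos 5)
Your proposal is correct and is essentially the paper's own proof: the $E(\hh_{2,0})\otimes P(\bb_{2,0})$ factor is split off via \autoref{cor-E2again}, and the long exact sequence of \eqref{eq-USES} collapses, by the $2$-freeness established in \autoref{Ext-R0} and \autoref{Ext-R1}, to the four-term exact sequence whose kernel and cokernel give $\tilde{E}_{2}^{0,1}$ and $\tilde{E}_{2}^{1,1}$, with vanishing for $s\ge 2$. The bookkeeping you flag as the main obstacle resolves exactly as you suggest and as the paper asserts in one line: every invariant class $\uu_{p/k}/v_{2}^{t}$ ($1\le t\le p$) in $\CEE{0}{2}{U^{1}}$ is a $v_{2}$-multiple of $\uu_{p/k}/v_{2}^{p}$, which is the image of $v_{2}^{-p}\uu_{p/k}$, so the entire $\uu_{p/k}$-family is absorbed into the image of the middle map (contributing precisely $\uu_{p/k}=v_{2}^{p}\cdot v_{2}^{-p}\uu_{p/k}$ to the kernel), leaving the cokernel $A(m+2)/I_{3}\otimes\{\ggamma_{\ell}\mid \ell\ge 2\}$.
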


\begin{proof}
By \autoref{Ext-R0} and \ref{Ext-R1}, we have the 4-term exact sequence\footnote{The case $m=0$ was described in \cite[Lemma 7.3.5]{Rav:MU2nd}.}
\[
0
\longrightarrow
\tilde{E}_{2}^{0,1}
\longrightarrow
\CEE{0}{2}{U^{0}}
\longrightarrow
\CEE{0}{2}{U^{1}}
\longrightarrow
\tilde{E}_{2}^{1,1}
\longrightarrow
0
\]
and 
$\tilde{E}_{2}^{s,1} = 0$ for $s \ge 2$.  
Since the image of the middle map is 
\[
A(m+1)/I_{2} \otimes \{ \uu_{1,i}/v_{2} \mid j \ge 0 \} 
\cong 
A(m+2)/I_{3} \otimes \{ \uu_{1,0}/v_{2} \}
\]
we obtain the result.  
\end{proof}

By \autoref{lem-B2free} and \ref{lem-U2free},
\autoref{eq-CESSpicture} is reduced to the following one:   

\begin{table}[h]
\caption[]{The Cartan-Eilenberg $E_{2}$-term of \eqref{CESS-for-E_{m+1}^{1}} for $j=2$.}
\label{eq-CESSpicture-j=2}
\begin{tabular}[]{|c|cccc|} \hline
  &$\vdots $
    &$\vdots $
      &$\vdots $&  \\
& & & & \\
$t=2$
  & $\Ext_{}^{0}(\Tmbar{2}\otimes U_{m+1}^{3})$        
    & $\Ext_{}^{1}(\Tmbar{2}\otimes U_{m+1}^{3})$        
      & $0$        
        & $\cdots $  \\
& & & & \\
$t=1$
  & $\Ext_{}^{0}(\Tmbar{2}\otimes U_{m+1}^{2})$
    & $\Ext_{}^{1}(\Tmbar{2}\otimes U_{m+1}^{2})$
      & $0$        
        & $\cdots $  \\
& & & & \\
$t=0$
  & $\ker \rho_{*}$        
    & described in \autoref{lem-B2free}
      & $0$
        &$\cdots$  \\
& & & & \\
\hline 
  & $s=0$ 
    & $s=1$ 
      & $s=2$ &  \\ \hline
\end{tabular} 
\end{table}

\begin{prop}\label{ANSSE2-j=2}
Below dimension $p|\vv_{3}|$, 
the {\CESS} of \autoref{eq-CESSpicture} for $j=2$ collapses, 
and we have the {\SES}
\[
0 \longrightarrow 
\tilde{E}_{\infty}^{1,t} \longrightarrow 
\Ext_{\Gamma(m+1)}^{t+2}(\Tm{2}) \longrightarrow 
\tilde{E}_{\infty}^{0,t+1} \longrightarrow 
0
\]
which splits for $t \ge 1$, but for $t=0$.
\end{prop}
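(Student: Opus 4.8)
The statement has two parts: (i) the Cartan--Eilenberg \SS\ of \autoref{eq-CESSpicture-j=2} collapses below dimension $p|\vv_{3}|$, and (ii) the resulting filtration of $\Ext_{\Gamma(m+1)}^{t+2}(\Tm{2})$ is the displayed \SES, which splits for $t \ge 1$ but not for $t=0$. I would treat collapse first, then the extension problem.

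\emph{Collapse.} By the reduced chart in \autoref{eq-CESSpicture-j=2}, the only nonzero columns are $s=0$ and $s=1$; everything in $s\ge2$ vanishes below dimension $p|\vv_{3}|$ by \autoref{lem-B2free} (for $t=0$) and \autoref{lem-U2free} (for $t\ge1$). The Cartan--Eilenberg differentials run $\tilde{d}_{r}\colon\tilde{E}_{r}^{s,t}\to\tilde{E}_{r}^{s+r,t-r+1}$, so any potentially nonzero $\tilde{d}_{r}$ would have target in column $s+r\ge1+r\ge2$, which is zero. Hence $\tilde{d}_{r}=0$ for all $r\ge2$ and $\tilde{E}_{2}=\tilde{E}_{\infty}$ in this range. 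This step is essentially bookkeeping once the two columns are identified.

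\emph{The \SES.} Collapse in a two-column spectral sequence gives, for each total degree, a filtration of $\Ext_{\Gamma(m+1)}^{*}(\Tm{2}\otimes E_{m+1}^{1})$ with only two nonzero associated graded pieces, namely $\tilde{E}_{\infty}^{1,t}$ (lower, a subobject) and $\tilde{E}_{\infty}^{0,t+1}$ (upper, a quotient). Shifting by the degree-one isomorphism $\Ext_{\Gamma(m+1)}^{t+1}(\Tm{2}\otimes E_{m+1}^{1})\cong\Ext_{\Gamma(m+1)}^{t+2}(\Tm{2})$ from \autoref{prop-ANSS-Tmi} yields the claimed \SES. I would then establish splitting for $t\ge1$ by exhibiting an $A(m+1)$-module splitting: by \autoref{lem-U2free} the top term $\tilde{E}_{\infty}^{0,t+1}$ is a free module over $E(\hh_{2,0})\otimes P(\bb_{2,0})$ on the generators $\uu_{1,i},\uu_{p/k}$, and for $t\ge1$ these carry a strictly positive power of $\bb_{2,0}$, so one can lift each generator by multiplying a chosen lift of its $t=0$ (or $t=1$) representative by the appropriate permanent cycle $\bb_{2,0}$ and $\hh_{2,0}$, the latter being permanent cycles with no indeterminacy in the collapsed range.

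\textbf{The main obstacle.} The hard part is the non-splitting assertion at $t=0$. Here the lower term is $\tilde{E}_{\infty}^{1,0}$ (the $s=1$, $t=0$ group described in \autoref{lem-B2free}, built from the $\bbeta'_{i/t}$ and $\bbeta_{p^{2}/t}$) and the upper term is $\tilde{E}_{\infty}^{0,1}=\CEE{0}{2}{U_{m+1}^{2}}$, generated by $\uu_{1,i}$ and $\uu_{p/k}$ by \autoref{lem-U2free}. To show the extension is nontrivial I would trace a specific generator of the top term back through the connecting homomorphisms $\delta^{0}\delta^{1}$ that define $\uu_{p/k}$ and show that no $\Gamma(m+1)$-invariant lift exists in $\Ext^{0}$; concretely, the obstruction is exactly the failure already flagged in \autoref{failure-5.2}, where the element $pv_{1}\tilde{\bbeta}_{p/1,p+1}=-v_{2}^{p^{m+1}}\vv_{1}/pv_{1}$ shows that $(i_{2})_{*}$ fails to be an $\Ext^{0}$-isomorphism beyond dimension $p|\vv_{2}|$. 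I expect the cleanest argument to pair the relevant top-term generator with this distinguished element and verify that a would-be splitting map would force it into the image of $\rho_{*}$, contradicting \autoref{lem-structure-of-cokerrho}; the bulk of the work is a careful chase through the two connecting homomorphisms to pin down the precise nonzero class realizing the extension.
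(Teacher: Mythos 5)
Your collapse step and the derivation of the {\SES} (two columns in \autoref{eq-CESSpicture-j=2}, plus the shift isomorphism from \autoref{prop-ANSS-Tmi}) coincide with the paper's, so the issues are entirely in the splitting claims, and there are genuine gaps in both. For $t \ge 1$: producing lifts of the generators of $\tilde{E}_{\infty}^{0,t+1}$ is vacuous, since the projection is surjective. Because $\tilde{E}_{\infty}^{0,t+1}$ is free over $A(m+1)/I_{2}$ and the subobject is killed by $I_{3}$, a splitting exists if and only if every generator admits a lift annihilated by $p$ and $v_{1}$; this is the point your argument never touches. Your proposed lifts $\bb_{2,0}^{a}\hh_{2,0}^{\varepsilon}\widetilde{\uu}$ inherit the defect of the $t=0$ lifts: non-splitting at $t=0$ means precisely that every lift $\widetilde{\uu}_{1,0}$ of $\uu_{1,0}$ has $v_{1}\widetilde{\uu}_{1,0}\neq 0$ (indeed $v_{1}\ttheta_{1,0}$ is represented by $-v_{2}\vv_{2}^{p}/pv_{1}^{p}=-v_{2}\bbeta_{p/p}$, the term $\vv_{3}/p$ dying in $E_{m+1}^{1}/(v_{1}^{\infty})$), so you would still have to prove $\bb_{2,0}^{a}\hh_{2,0}^{\varepsilon}\cdot\bigl(v_{1}\widetilde{\uu}_{1,0}\bigr)=0$. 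That is true, but the reason is exactly the structural fact that constitutes the paper's proof and that you omit: by \autoref{lem-U2free}, for $t\ge 1$ the filtration-one piece $\tilde{E}_{2}^{1,t}$ is $v_{2}$-torsion (an $A(m+2)/I_{3}$-module on the $\ggamma_{\ell}$) while $\tilde{E}_{2}^{0,t+1}$ is $v_{2}$-torsion free, and $\tilde{E}_{2}^{s,*}=0$ for $s\ge 2$, so every filtration-one class in the abutment is killed by $v_{2}$. (You would also need to justify that $\hh_{2,0}$ and $\bb_{2,0}$ act on $\Ext_{\Gamma(m+1)}^{*}(\Tm{2})$ compatibly with the filtration at all: the paper later needs \autoref{lem-b20} just to make $\bb_{2,0}$ act on permanent cycles, and no cocycle over $\Gamma(m+1)$ lifting $\hh_{2,0}$ is ever produced.)

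For $t=0$, your sketch aims at the wrong generators and the wrong obstruction. The $\uu_{p/k}$ are the well-behaved classes: by \autoref{lem-u_{p/k}} they lift to $\ttheta_{p/k}$, which are $G(m+1)$-invariant with all Quillen operations trivial, so they cannot detect non-splitting. The paper's witness is $\uu_{1,0}$: it is killed by $v_{1}$ in $\tilde{E}_{\infty}^{0,1}$, while its lift $\delta^{0}\delta^{1}(\ttheta_{1,0})=\delta^{0}\delta^{1}\bigl(\vv_{3}/pv_{1}-v_{2}\vv_{2}^{p}/pv_{1}^{1+p}\bigr)\in\Ext_{\Gamma(m+1)}^{2}(\Tm{2})$ is not killed by $v_{1}$, by the computation above. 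Your proposed mechanism --- that a splitting would force a class into $\im\rho_{*}$, contradicting \autoref{lem-structure-of-cokerrho}, or that ``no $\Gamma(m+1)$-invariant lift exists in $\Ext^{0}$'' --- conflates the comodule-level question of invariant lifts with the module-theoretic extension problem: the obstruction to splitting is the failure of $v_{1}$-annihilation of lifts in $\Ext^{2}$, not invariance, and \autoref{failure-5.2} (which concerns $\ker\delta^{1}$ and the map $(i_{2})_{*}$) enters the paper's proof nowhere. The chromatic phenomenon you point to is genuinely related, but as written your outline does not produce the non-splitting.
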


\begin{proof}
The {\SS} collapses
since we have only two columns in \autoref{eq-CESSpicture-j=2}.  
The middle groups is isomorphic to 
$\Ext_{\Gamma(m+1)}^{t+1}(\Tm{2}\otimes E_{m+1}^{1})$, 
and the {\SES}s follow by inspection of \autoref{eq-CESSpicture-j=2}.  
For $t \ge 1$, it splits because 
$\tilde{E}_{2}^{1,t}$ is $v_{2}$-torsion 
while $\tilde{E}_{2}^{0,t+1}$ is $v_{2}$-torsion free by \autoref{lem-U2free}.    
For $t=0$, for example, an element 
\[
\uu_{1,0}
\in 
\CEE{0}{2}{U_{m+1}^{2}}
\cong \tilde{E}_{2}^{0,1} 
\]
is killed by $v_{1}$, however, its lift  
\[
\delta^{0}\delta^{1}(\ttheta_{1,0})
= 
\delta^{0}\delta^{1}
\left(
  \frac{\vv_{3}}{pv_{1}}
- \frac{v_{2}\vv_{2}^{p}}{pv_{1}^{1+p}} 
\right)
\in \Ext_{\Gamma(m+1)}^{2}(\Tm{2})
\]
is not killed by $v_{1}$. 
Thus, it does not split.  
\end{proof}

\begin{thm}\label{cor-collapse}
Below dimension $p|\vv_{3}|$, the {\ANSS} for $T(m)_{(2)}$ collapses.
\end{thm}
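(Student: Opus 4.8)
The plan is to derive the collapse from the explicit computation of the Adams--Novikov $E_2$-term $\Ext_{\Gamma(m+1)}^{*}(\Tm{2})$ carried out in \autoref{ANSSE2-j=2}, by a sparseness (no-room) argument. Recall that an Adams--Novikov differential has the form $d_r\colon E_r^{n,t}\to E_r^{n+r,\,t+r-1}$, so it raises the cohomological filtration $n$ by $r\ge 2$ while lowering the stem $t-n$ by one. I would first read off, from \autoref{lem-B2free}, \autoref{lem-U2free} and \autoref{ANSSE2-j=2}, that below dimension $p|\vv_3|$ the entire $E_2$-term is built from finitely many classes of low filtration under the action of the two operators $\hh_{2,0}$ and $\bb_{2,0}$, which shift the filtration by $1$ and $2$ respectively: the classes of \autoref{prop-ANSS-Tmi} in filtrations $0$ and $1$; the classes $\bbeta'_{i/t}$, $\bbeta_{p^2/t}$, $\uu_{1,i}$ and $\uu_{p/k}$ in filtration $2$; and the classes $\ggamma_\ell$ in filtration $3$. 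By the multiplicativity of the spectral sequence it then suffices to show that these finitely many generators, together with $\hh_{2,0}$ and $\bb_{2,0}$, are permanent cycles, for the Leibniz rule propagates this to the whole term.

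I would verify the permanent-cycle claim degree by degree. For the operators, whose internal degrees are $|\hh_{2,0}|=|\wt_2|$ and $|\bb_{2,0}|=|\wt_2^{p}|$, any differential would increase the filtration by at least two and decrease the stem by one; one checks directly against the degree list above that the receiving bidegrees are empty below $p|\vv_3|$, so both operators survive. The base generators, all of which lie in filtration at most three, are treated the same way: for each one I would locate the bidegree that a $d_r$ with $r\ge 2$ would target---one stem lower and at least two filtrations higher---and observe that in the range it is either zero or contains only $\hh_{2,0}$- and $\bb_{2,0}$-multiples of classes already known to be permanent cycles, hence is unavailable as a target. Since only finitely many powers of $\bb_{2,0}$ fit below dimension $p|\vv_3|$, the filtration is bounded throughout the range and this reduces to a finite inspection.

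The crux, and the step I expect to be the most delicate, is precisely this bookkeeping. The $E_2$-term splits into the $v_2$-torsion-free family coming from the $s=0$ column of \autoref{eq-CESSpicture-j=2} (the $\uu$'s and their operator-multiples) and the $v_2$-torsion family coming from the $s=1$ column (the $\bbeta'$'s, the $\ggamma_\ell$, and their operator-multiples), and one must rule out any $d_r$ carrying one family into the other under the required stem-and-filtration shift. Staying below $p|\vv_3|$ is exactly what keeps these two families from lining up in adjacent stems in the way a nonzero differential would demand, and it is also the range in which the underlying description of the $E_2$-term in \autoref{ANSSE2-j=2} is available. Once every generator and both operators are shown to be permanent cycles, the Leibniz rule forces $d_r\equiv 0$ for all $r\ge 2$, and the spectral sequence collapses.
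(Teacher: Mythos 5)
Your plan is a no-room argument, which is in spirit what the paper does, but two of its load-bearing steps fail as stated. First, the reduction via ``multiplicativity of the spectral sequence'' and the Leibniz rule is not available here: $T(m)_{(2)}$ is not a ring spectrum and $\Tm{2}$ is not a comodule algebra, so its {\ANSS} is not multiplicative; moreover $\hh_{2,0}$ and $\bb_{2,0}$ are not classes in this spectral sequence at all --- they are defined over $\Gamma(m+2)$ and enter only as bookkeeping for the additive structure of the Cartan--Eilenberg $E_{2}$-term ($\wt_{2}$ is not even a cocycle over $\Gamma(m+1)$). An operation ``multiply by $\bb_{2,0}$'' commuting with differentials is exactly the kind of statement the paper has to work for (\autoref{lem-b20}, via the explicit cobar cocycle $\bb'_{2,0}$); it cannot be invoked for free, and the analogous statement for $\hh_{2,0}$ is problematic. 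So the reduction to finitely many generators plus the two operators is unjustified.

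Second, and more decisively, your key claim that for every $d_{r}$ with $r \ge 2$ ``the receiving bidegrees are empty below $p|\vv_{3}|$'' is false. By \autoref{ANSSE2-j=2} and \autoref{lem-U2free}, the classes $\bb_{2,0}^{k}\uu_{1,0}$ for $k \le p-1$ and $\hh_{2,0}\bb_{2,0}^{k}\uu_{1,0}$ for $k \le p-2$ all have total degree below $p|\vv_{3}|$ (for instance $\bb_{2,0}^{p-1}\uu_{1,0}$ has total degree $2(p^{m+4}-p^{2}-p)$, which is less than $p|\vv_{3}| = 2(p^{m+4}-p)$), so the $E_{2}$-term is nonzero in every filtration from $2$ up to $2p$ within the range, and potential targets for $d_{3}, d_{5}, \dots$ abound; ruling them out would require the stem-by-stem matching you defer, not a vacuity argument. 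The ingredient you are missing is the standard odd-primary sparseness: every internal degree occurring in the $E_{2}$-term is divisible by $2(p-1)$, so $d_{r}$ can be nonzero only when $r \equiv 1 \bmod{2(p-1)}$, {\ie} the shortest possible differential is $d_{2p-1}$. Granting that the filtration $0$ and $1$ classes of \autoref{prop-ANSS-Tmi} are permanent cycles, every possible differential then lands in filtration at least $2p+1$, and the first element in that filtration, $\hh_{2,0}\bb_{2,0}^{p-1}\uu_{1,0}$, has total degree $2(p^{m+4}+p^{m+2}-p^{2}-p)-3 > p|\vv_{3}|$. That single degree check, after sparseness, is the paper's entire proof; your version, without sparseness, does not go through as described.
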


\begin{proof}
We have computed the Adams-Novikov 
$E_{2}^{n,*} = \Ext_{\Gamma(m+1)}^{n}(\Tm{2})$ for $n \ge 2$ 
and the shortest possible differential is 
$d_{2p-1}: E_{2}^{2,*} \to E_{2}^{2p+1,*}$. 
The first element in the target is
$\hh_{2,0}\bb_{2,0}^{p-1}\uu_{1,0} \in E_{2}^{2p+1,*}$, 
and its total degree 
\[
2(p^{m+4}+p^{m+2}-p^2-p)-3 
\]
is larger than $p|\vv_{3}|$.  
\end{proof}

\section{The homotopy groups of \texorpdfstring{$T(m)_{(1)}$}{T(m)(1)}}

In this section we determine the homotopy groups of $T(m)_{(1)}$ below dimensions $p|\vv_{3}|$. 
To determine the Cartan-Eilenberg $E_{2}$-term of \autoref{eq-CESSpicture} for $j=1$, 
we use the algebraic {\SDSS} of Theorem 1.17: 
For a $G(m+1)$-comodule $M$ and non-negative integer $i$, 
there is a {\SS} converging to 
$\Ext_{G(m+1)}(\Tmbar{i} \otimes_{A(m+1)} M)$
with 
\[
E_{1}^{*,t} \cong
E(\hh_{1,j}) \otimes P(\bb_{1,j}) \otimes \Ext_{G(m+1)}^{t}(\Tmbar{i+1} \otimes_{A(m+1)} M)
\]
with 
$\hh_{1,j} \in E_{1}^{1,0}$, 
$\bb_{1,j} \in E_{1}^{2,0}$, 
and 
$d_{r}: E_{r}^{s,t} \to E_{r}^{s+r,t-r+1}$.   
In particular, $d_{1}$ is induced by the action on $M$ of $r_{p^{j}}$ 
for $s$ even and $r_{(p-1)p^{j}}$ for $s$ odd. 
Note that $r_{(p-1)p^{j}}$ is congruent to the $(p-1)$-fold iteration of $r_{p^j}$
up to unit scalar multiplication.

\bigskip

The case $M = U_{m+1}^{2}$ is easy.

\begin{prop}\label{lem-U2-j=1}
Below dimension $p|\vv_{3}|$, 
the algebraic {\SDSS} for $U_{m+1}^{2}$ collapses
from the $E_{2}$-term, and
\[
\CEE{*+k}{1}{U_{m+1}^{2}} 
\cong 
E(\hh_{1,1}) \otimes P(\bb_{1,1}) \otimes \CEE{k}{2}{U_{m+1}^{2}}. 
\]
\end{prop}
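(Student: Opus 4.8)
The plan is to apply the algebraic {\SDSS} of Theorem 1.17 with $i=1$, $j=1$ and $M=U_{m+1}^{2}$, whose $E_{1}$-term is $E(\hh_{1,1})\otimes P(\bb_{1,1})\otimes\CEE{t}{2}{U_{m+1}^{2}}$ and which converges to $\CEE{*}{1}{U_{m+1}^{2}}$. By \autoref{lem-U2free} the coefficient groups $\CEE{t}{2}{U_{m+1}^{2}}$ vanish for $t\ge2$, so the $E_{1}$-page is concentrated in the rows $t=0$ (generated over $A(m+1)/I_{2}$ by the $\uu_{1,j}$ and $\uu_{p/k}$) and $t=1$ (generated over $A(m+2)/I_{3}$ by the $\ggamma_{\ell}$). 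Since $d_{r}$ shifts $(s,t)$ to $(s+r,t-r+1)$, the only possibly nonzero differentials are $d_{1}$ (which preserves $t$) and a single $d_{2}\colon E_{2}^{s,1}\to E_{2}^{s+2,0}$; proving that both vanish gives the collapse, and the free structure over $E(\hh_{1,1})\otimes P(\bb_{1,1})$ then yields the stated isomorphism with no extension problems.

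For $d_{1}$, which is induced by $\wrr_{p}$ for $s$ even and by $\wrr_{(p-1)p}$ for $s$ odd, it suffices to check vanishing on the three families of generators. The $\uu_{p/k}$ are $G(m+1)$-invariant by \autoref{lem-u_{p/k}}, so every $\wrr_{\nu}$ kills them. For $\uu_{1,j}$ I would invoke \autoref{thm-uij}, which gives $\wrr_{p}(\ttheta_{1,j})=jv_{2}\bbeta_{j+p-1/p}$ in the relevant range $0\le j\le p^{2}-p$; since $\bbeta_{j+p-1/p}=(j+p-1)v_{1}^{j-1}\bbeta'_{j+p-1/j+p-1}$ lies in $B_{m+1}$, hence in $\ker\delta^{1}$ (and the coefficient vanishes for $j=0$), applying $\delta^{0}\delta^{1}$ shows $\wrr_{p}(\uu_{1,j})=0$; the operation $\wrr_{(p-1)p}$ then also vanishes because \autoref{lem-iteration-quillen} identifies it up to a unit with $(\wrr_{p})^{p-1}$ modulo $p$ and the coefficient module is $p$-torsion. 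For $\ggamma_{\ell}=\delta^{2}(\uu_{\ell,0}/v_{2})$ with $\ell\ge2$ I would use \autoref{comodulestructure-u}: modulo $v_{2}^{\ell}$ the coaction on $\ttheta_{\ell,0}$ involves only the operations $\wrr_{kp^{2}}$, and since neither $p$ nor $(p-1)p$ is a multiple of $p^{2}$, the classes $\wrr_{p}(\uu_{\ell,0}/v_{2})$ and $\wrr_{(p-1)p}(\uu_{\ell,0}/v_{2})$ already vanish in the quotient $U^{1}$ of \eqref{eq-USES}; naturality of $\delta^{2}$ then gives $\wrr_{p}(\ggamma_{\ell})=\wrr_{(p-1)p}(\ggamma_{\ell})=0$. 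This establishes $d_{1}=0$ and $E_{2}=E_{1}$.

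To kill $d_{2}\colon E_{2}^{s,1}\to E_{2}^{s+2,0}$ I would use that $\hh_{1,1}$ and $\bb_{1,1}$ are permanent cycles and that $d_{2}$ is an $A(m+1)$-linear derivation, so $d_{2}$ is determined by the classes $d_{2}(\ggamma_{\ell})\in E_{2}^{2,0}$, each an $A(m+1)$-combination of $\bb_{1,1}\uu_{1,j}$ and $\bb_{1,1}\uu_{p/k}$. The decisive observation is the torsion dichotomy of \autoref{lem-U2free}: the source row $t=1$ is a module over $A(m+2)/I_{3}$ and is annihilated by $v_{2}$, whereas the target row $t=0$ is a free $A(m+1)/I_{2}$-module on which $v_{2}$ acts injectively. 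Hence $v_{2}\,d_{2}(\ggamma_{\ell})=d_{2}(v_{2}\ggamma_{\ell})=0$ forces $d_{2}(\ggamma_{\ell})=0$, exactly the mechanism used in the splitting part of \autoref{ANSSE2-j=2}. Therefore the {\SS} collapses from $E_{2}$.

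The main obstacle is this last step, because a dimension count alone does not suffice: $\ggamma_{2}$ and $\bb_{1,1}\uu_{1,0}$ have the same internal degree, so $d_{2}$ is not excluded by sparseness and one genuinely needs the $v_{2}$-torsion argument. A secondary delicate point is the $d_{1}$-computation on the $\ggamma_{\ell}$, where one must descend to the quotient $U^{1}$ and use that \autoref{comodulestructure-u} controls the coaction only modulo $v_{2}^{\ell}$; this is exactly where the hypothesis $\ell\ge2$ (equivalently, staying below $p|\vv_{3}|$) enters.
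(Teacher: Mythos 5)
Your proposal is correct and is essentially the paper's own argument: the two-row structure from \autoref{lem-U2free} leaves only $d_{1}$ and $d_{2}$ as possible differentials, $d_{1}$ vanishes because $\wrr_{p}$ (hence $\wrr_{(p-1)p}$, by \autoref{lem-iteration-quillen}) acts trivially on $U_{m+1}^{2}$, $d_{2}$ vanishes by the $v_{2}$-torsion versus $v_{2}$-torsion-free dichotomy, and the two-row sparseness rules out $d_{r}$ for $r\ge3$. The only difference is presentational: where the paper disposes of $d_{1}$ by citing \autoref{cor-E2again}, you unwind that citation into a generator-by-generator verification via \autoref{thm-uij}, \autoref{lem-u_{p/k}}, \autoref{comodulestructure-u} and naturality of the connecting homomorphisms, which is the same idea carried out in more detail.
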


\begin{proof}
Since the action of $\wrr_{p}$ on $U_{m+1}^{2}$
is trivial by \autoref{cor-E2again}, 
the $E_{1}$-term coincides
with the $E_{2}$-term.  The differentials 
$d_{2}: E_{2}^{s,1} \to E_{2}^{s+2,0}$ 
are also trivial since the source is $v_{2}$-torsion
while the target is $v_{2}$-torsion free.  By \autoref{lem-U2free} the
{\SDSS} has only two rows, and so $d_{r} = 0$ for $r \ge 3$.
\end{proof}

Hereafter we will denote $\uu_{1,i}$ by $\uu_{i}$ for short.  
Since 
\[
\tilde{E}_{2}^{s,t} 
\cong \CEE{s}{1}{U_{m+1}^{t+1}} 
\quad
\mbox{for $t \ge 1$}, 
\]
the following is a translation of \autoref{lem-U2-j=1}.  

\begin{cor}\label{cor-ExtU}
Below dimension $p|\vv_{3}|$, 
the Cartan-Eilenberg $E_{2}$-term of \autoref{eq-CESSpicture}
\[
\tilde{E}_{2}^{*+s,*+1} \cong \CEE{*+s}{1}{U_{m+1}^{*+2}} 
\]
is isomorphic to 
\[
E(\hh_{2,0}, \hh_{1,1}) \otimes P(\bb_{2,0}, \bb_{1,1}) \otimes 
\left\{
\begin{array}{c}
A(m+1)/I_{2} \otimes \Big\{ \uu_{i}, \uu_{p/k} \mid i \ge 0, 2 \le k \le p \Big\} \\
\oplus \\
A(m+2)/I_{3} \otimes \Big\{ \ggamma_{\ell} \mid \ell \ge 2 \Big\}
\end{array}
\right.
\] 
where the bidegree of elements are 
$\uu \in \tilde{E}_{2}^{0,1} $
and
$\ggamma \in \tilde{E}_{2}^{1,1}$
and the operators behave as if they had the bidegree 
$\hh_{2,0} \in \tilde{E}_{2}^{0,1}$, 
$\bb_{2,0} \in \tilde{E}_{2}^{0,2}$, 
$\hh_{1,1} \in \tilde{E}_{2}^{1,0}$
and
$\bb_{1,1} \in \tilde{E}_{2}^{2,0}$. 
\end{cor}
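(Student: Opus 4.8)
The plan is to assemble three results already in hand and reindex them into the Cartan--Eilenberg bigrading of \autoref{eq-CESSpicture}. First I would invoke \autoref{cor-E2again} to write, below dimension $p|\vv_{3}|$,
\[
U_{m+1}^{*+2} \cong E(\hh_{2,0}) \otimes P(\bb_{2,0}) \otimes U_{m+1}^{2}
\]
as $G(m+1)$-comodules, with $\hh_{2,0}$ and $\bb_{2,0}$ invariant (annihilated by every $\wrr_{j}$, $j \ge 1$). Since the factor $E(\hh_{2,0}) \otimes P(\bb_{2,0})$ is then a trivial $G(m+1)$-comodule, a K\"unneth-type argument pulls it outside $\Ext_{G(m+1)}(\Tmbar{1} \otimes -)$, giving
\[
\CEE{*+s}{1}{U_{m+1}^{*+2}} \cong E(\hh_{2,0}) \otimes P(\bb_{2,0}) \otimes \CEE{*+s}{1}{U_{m+1}^{2}}.
\]
This captures the vertical ($t$-direction) periodicity of the chart.

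Next I would apply \autoref{lem-U2-j=1} to split off the descent classes that arise in passing from $\Tmbar{2}$ to $\Tmbar{1}$,
\[
\CEE{*+s}{1}{U_{m+1}^{2}} \cong E(\hh_{1,1}) \otimes P(\bb_{1,1}) \otimes \CEE{*}{2}{U_{m+1}^{2}},
\]
and then substitute the explicit module of \autoref{lem-U2free}: the $s=0$ summand $A(m+1)/I_{2} \otimes \{ \uu_{i}, \uu_{p/k} \}$ together with the $s=1$ summand $A(m+2)/I_{3} \otimes \{ \ggamma_{\ell} \}$. Combining the two families of operators produces the asserted isomorphism. The bidegrees then follow by bookkeeping: $\hh_{2,0}$ and $\bb_{2,0}$ raise the superscript of $U_{m+1}$, hence the internal index $t$, by $1$ and $2$, landing in $\tilde{E}_{2}^{0,1}$ and $\tilde{E}_{2}^{0,2}$, whereas the descent classes $\hh_{1,1}$ and $\bb_{1,1}$ raise the external $G(m+1)$-Ext degree, hence the index $s$, by $1$ and $2$, landing in $\tilde{E}_{2}^{1,0}$ and $\tilde{E}_{2}^{2,0}$. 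The generators $\uu$ come from $\CEE{0}{2}{U_{m+1}^{2}} \cong \tilde{E}_{2}^{0,1}$ and $\ggamma$ from $\CEE{1}{2}{U_{m+1}^{2}} \cong \tilde{E}_{2}^{1,1}$, exactly matching the claimed placements.

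I expect the point needing the most care to be the first step, namely the verification that the splitting in \autoref{cor-E2again} is one of $G(m+1)$-comodules with $\hh_{2,0}$ and $\bb_{2,0}$ genuinely invariant; only this licenses moving the trivial tensor factor through $\Ext_{G(m+1)}(\Tmbar{1} \otimes -)$. A secondary, purely arithmetic obstacle is the truncation built into ``below dimension $p|\vv_{3}|$'': the displayed tensor products are finite in this range, so one should confirm that the high powers of $\bb_{2,0}$ and $\bb_{1,1}$ which would push generators out of range are precisely the terms omitted, and that no additional relation is forced within range.
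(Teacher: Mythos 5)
Your proposal is correct and follows essentially the same route as the paper: the corollary is stated there as a direct ``translation'' of \autoref{lem-U2-j=1}, obtained exactly as you describe by using \autoref{cor-E2again} to split $U_{m+1}^{*+2}$ into shifted copies of $U_{m+1}^{2}$ (so that $E(\hh_{2,0})\otimes P(\bb_{2,0})$ passes through $\Ext_{G(m+1)}(\Tmbar{1}\otimes -)$), then applying \autoref{lem-U2-j=1} and substituting the explicit module from \autoref{lem-U2free}. Your bidegree bookkeeping and the two points you flag for care (comodule-level splitting, dimension truncation) match the paper's implicit argument.
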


\bigskip

The algebraic {\SDSS} for $M=B_{m+1}$ was treated in \cite{NR:OnthePrimitiveBeta}, 
which we summarize here.  
Below dimension $|\vv_{2}^{p^{2}+1}/v_{1}^{p^2}|$ 
it collapses from $E_{2}$-term since $B_{m+1}$ is $2$-free by \autoref{lem-structure-of-B}, 
so we need to compute only $d_{1}$.  
On the elements of $\CEE{0}{2}{B_{m+1}}$ \eqref{B-for-j=2}, we have 
\[
\wrr_{p}(\bbeta'_{i/e_{1}}) = \bbeta_{i-1/e_{1}-1}, 
\quad
\wrr_{p}(\bbeta_{pi/e_{1}}) = 0 
\quad
\mbox{and}
\quad
\wrr_{p^2-p}(\bbeta'_{i/p}) = \bbeta_{i-p+1/1} 
\]
up to unit scalar multiplication 
(cf.~\cite[Proposition B.2]{NR:OnthePrimitiveBeta}). 
It may be helpful to demonstrate the behavior of $d_{1}$ for $p=3$. 
The following diagrams describes $d_{1}$ related to the first set of \eqref{B-for-j=2}: 

\setcounter{equation}{\value{thm}}
\begin{equation}\label{diagram-CESS-d1-1}
\begin{split}
\xymatrix@=15pt{
\bbeta'_{3/1} & \bbeta'_{3/2} \ar[dl]_{\wrr_{3}} & \bbeta'_{3/3} \ar[dl]_{\wrr_{3}}  \ar@/^1pc/[ddll]^{\wrr_{6}}  \\
\bbeta'_{2/1} & \bbeta'_{2/2} \ar[dl]_{\wrr_{3}}  \\
\bbeta'_{1/1}
}
\xymatrix@=15pt{
 & & \bbeta_{5/3} \ar[dl]^{\wrr_{3}} \ar@/_1pc/[ddll]_{\wrr_{6}}  \\
 & \bbeta_{4/2} \ar[dl]^{\wrr_{3}} & \bbeta_{4/3} \ar[dl]^{\wrr_{3}}  \\
\bbeta_{3/1} & \bbeta_{3/2} & \bbeta_{3/3}
}
\end{split}
\end{equation}
\setcounter{thm}{\value{equation}}%
Corresponding to the diagonal containing $\bbeta'_{1/1}$, the subgroup of $E_{1}$ generated by 
\[
E(\hh_{1,1}) \otimes P(\bb_{1,1}) \otimes 
\{ \bbeta'_{1/1}, \ldots, \bbeta'_{p/p} \} 
\]
reduces to simply $\{ \bbeta'_{1/1} \}$ on passage to $E_{2}$.  
The similar argument is true for the diagonal containing $\bbeta_{p/1}$.  
On the other hand, 
corresponding to the diagonal containing $\bbeta'_{i/1}$ $(2 \le i \le p)$ 
is the subgroup generated by 
\[
E(\hh_{1,1}) \otimes P(\bb_{1,1}) \otimes 
\{ \bbeta'_{i/1}, \ldots, \bbeta'_{p/p-i+1} \}
\]
which is reduced to 
$P(\bb_{1,1}) \otimes \{ \bbeta'_{i/1}, \hh_{1,1}\bbeta'_{p/p-i+1} \}$.  
The similar argument is true for the diagonal containing 
$\bbeta_{p/i}$ $(2 \le i \le p)$; 
the subgroup generated by 
\[
E(\hh_{1,1}) \otimes P(\bb_{1,1}) \otimes 
\{ \bbeta_{p/i}, \ldots, \bbeta_{2p-i/p} \}
\]
reduces to 
$P(\bb_{1,1}) \otimes \{ \bbeta_{p/i}, \hh_{1,1}\bbeta_{2p-i/p} \}$.  
In particular, the subgroups corresponding to $\bbeta'_{p/1}$ and $\bbeta_{p/p}$ survive to $E_{2}$ entirely.

\begin{rem}\label{quillen-massey-rel}
In the diagram \eqref{diagram-CESS-d1-1} we can read off the existence of certain Massey products.  
For example, if we have a relation
$\wrr_{p}(b)=a$, then it we have the Massey product
$\langle \hh_{1,1}, \hh_{1,1}, a \rangle$, 
as we will explain in \autoref{app-massey}.  
In general, if we have a sequence 
\setcounter{equation}{\value{thm}}
\begin{equation}\label{sequence-for-Massey}
a_{i} \stackrel{\wrr_{p}}{\longrightarrow}
a_{i-1} \stackrel{\wrr_{p}}{\longrightarrow}
\cdots \stackrel{\wrr_{p}}{\longrightarrow}
a_{1}
\qquad
(0<i<p)
\end{equation}
\setcounter{thm}{\value{equation}}%
then we would have the Massey product 
$\langle \hh_{1,1}, \ldots, \hh_{1,1}, a_{1} \rangle$ 
with $i$-factors of $\hh_{1,1}$
whose representative has the leading term $\wt_{1}^{p} \otimes a_{i}$.
In this paper we denote this Massey product by $\specialmassey{i}{a_{1}}$, although it is denoted by $\underline{pi}a_{1}$ in \cite[Definition 7.4.12]{Rav:MU2nd}.  
\end{rem}

Note that the entire configuration is $\vv_{2}^{p}$-periodic. 
The diagram containing $\bbeta_{p^2/1}$ corresponding to the right one of \eqref{diagram-CESS-d1-1} is combined with the diagram for the second set of \eqref{B-for-j=2}: 
\setcounter{equation}{\value{thm}}
\begin{equation}\label{diagram-CESS-d1-2}
\begin{split}
\xymatrix@=15pt{
 & & \bbeta_{11/3} \ar[dl]^{\wrr_{3}} \ar@/_1.2pc/[ddll]_{\wrr_{6}} & \cdots & \cdots & \bbeta_{11/9} \ar[dl]^{\wrr_{3}} \ar@/_1.2pc/[ddll]_{\wrr_{6}}  \\
& \bbeta_{10/2} \ar[dl]^{\wrr_{3}} & \cdots & \cdots & \bbeta_{10/8} \ar[dl]^{\wrr_{3}} & \bbeta_{10/9} \ar[dl]^{\wrr_{3}}  \\
\bbeta_{9/1} & \cdots & \cdots & \bbeta_{9/7} & \bbeta_{9/8} & \bbeta_{9/9}  \\
}
\end{split}
\end{equation}
\setcounter{thm}{\value{equation}}%
Then, the summand corresponding to $\bbeta_{p^2/k}$ $(1 \le k \le p^2-p+1)$ reduces to $\{ \bbeta_{p^2/k} \}$, 
and the summand corresponding to $\bbeta_{p^2/p^2-\ell}$ $(0 \le \ell \le p-2)$ reduces to 
$P(\bb_{1,1}) \otimes \{ \bbeta_{p^2/p^2-\ell}, \hh_{1,1} \bbeta_{p^2+\ell/p^2} \}$.

\bigskip

By these observations we have the following result:

\begin{prop}[{\cite[Proposition 7.3]{NR:OnthePrimitiveBeta}}]\label{prop-ExtB}
Below dimensions $|\vv_{2}^{p^2+1}/v_{1}^{p^2}|$,
the Cartan-Eilenberg $E_{2}$-term of \autoref{eq-CESSpicture}
\[
\tilde{E}_{2}^{*+1,0} = \CEE{*}{1}{B_{m+1}}
\]
has the following $A(m+1)/I_{2}$-basis:
\[
\begin{array}{c}
P(\vv_{2}^{p}) \otimes \big\{ \bbeta'_{1},\bbeta_{p/1} \big\}
\oplus 
\big\{ \bbeta_{p^{2}/k} \mid 1 \le k \le p^2-p+1 \big\}  \\
\oplus  \\
P(\bb_{1,1}) \otimes 
\left(
\begin{array}{c}
P(\vv_{2}^{p}) \otimes
\big\{ 
\bbeta'_{i/1}, 
\hh_{1,1}\bbeta '_{p/p-i+1}, 
\bbeta_{p/i}, 
\hh_{1,1}\bbeta_{2p-i/p} \mid 2 \le i \le p
\big\}  \\
\oplus  \\
\big\{ 
\bbeta_{p^{2}/p^{2}-\ell},  
\hh_{1,1} \bbeta_{p^2+\ell/p^2} 
\mid 0 \le \ell \le p-2
\big\} 
\end{array}
\right) 
\end{array}
\]
subject to the caveat that $\vv_{2}\bbeta_{k/e}=\bbeta_{k+1/e}$.  The
bigrading of elements are (omitting unnecessary subscripts) $\bbeta
\in \tilde{E}_{2}^{1,0}$ and the operators $\hh_{1,1}$ and $\bb_{1,1}$
behave as if they had the bidegrees given in \autoref{cor-ExtU}.  
\end{prop}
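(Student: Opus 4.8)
The plan is to compute $\CEE{*}{1}{B_{m+1}} = \tilde{E}_2^{*+1,0}$ by running the algebraic small descent spectral sequence of Theorem 1.17 with $M = B_{m+1}$ and $i=1$, so that $\hh_{1,1}$ and $\bb_{1,1}$ are the relevant operators and the $E_1$-term is $E(\hh_{1,1}) \otimes P(\bb_{1,1}) \otimes \CEE{t}{2}{B_{m+1}}$. First I would invoke the $2$-freeness of $B_{m+1}$ below $|\vv_2^{p^2+1}/v_1^{p^2}|$ from \autoref{lem-structure-of-B}: this forces $\CEE{t}{2}{B_{m+1}} = 0$ for $t > 0$, so the $E_1$-page is concentrated in the row $t=0$, where $\CEE{0}{2}{B_{m+1}} \cong L_2(B_{m+1})$ carries the $A(m+1)$-module structure with generators \eqref{B-for-j=2}. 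With only one row available the spectral sequence collapses at $E_2$, and the entire computation reduces to taking the homology of $(E_1, d_1)$.

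The second step is to evaluate $d_1$, which by Theorem 1.17 is induced by $\wrr_p$ in even filtration and by $\wrr_{p^2-p}$ in odd filtration. Using the formulas $\wrr_p(\bbeta'_{i/e_1}) = \bbeta_{i-1/e_1-1}$, $\wrr_p(\bbeta_{pi/e_1}) = 0$ and $\wrr_{p^2-p}(\bbeta'_{i/p}) = \bbeta_{i-p+1/1}$ (all up to units), the $A(m+1)$-module generators of $L_2(B_{m+1})$ group into finite diagonal chains under $\wrr_p$, with the long operation $\wrr_{p^2-p}$ supplying the odd differential at the top of each length-$p$ chain; these are exactly the chains displayed in \eqref{diagram-CESS-d1-1} and \eqref{diagram-CESS-d1-2}. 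I would then compute the homology of $E(\hh_{1,1}) \otimes P(\bb_{1,1})$ tensored with each chain separately. An interior arrow of a chain is an isomorphism and contributes nothing; the surviving classes sit at the two ends, giving: the $\{\bbeta'_{1/1},\dots,\bbeta'_{p/p}\}$ chain collapses to $\{\bbeta'_{1/1}\}$; each chain from $\bbeta'_{i/1}$ ($2\le i\le p$) leaves $P(\bb_{1,1})\otimes\{\bbeta'_{i/1},\hh_{1,1}\bbeta'_{p/p-i+1}\}$, and symmetrically for the $\bbeta_{p/i}$ chains; the isolated generators $\bbeta'_{p/1}$ and $\bbeta_{p/p}$ survive with their full $E(\hh_{1,1})\otimes P(\bb_{1,1})$; and the $\bbeta_{p^2/\cdot}$ family yields $\{\bbeta_{p^2/k}\mid 1\le k\le p^2-p+1\}$ together with $P(\bb_{1,1})\otimes\{\bbeta_{p^2/p^2-\ell},\hh_{1,1}\bbeta_{p^2+\ell/p^2}\}$ for $0\le\ell\le p-2$.

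The final step is to assemble these contributions. Since the whole diagonal pattern is $\vv_2^p$-periodic (via $\vv_2\bbeta_{k/e}=\bbeta_{k+1/e}$), the infinitely many chains attached to $\bbeta'_1$ and $\bbeta_{p/1}$ package into the two $P(\vv_2^p)$-summands, and likewise for the $P(\bb_{1,1})$-summands; reading off the bigradings from $\bbeta\in\tilde{E}_2^{1,0}$ and the bidegrees of $\hh_{1,1},\bb_{1,1}$ recorded in \autoref{cor-ExtU} produces precisely the stated $A(m+1)/I_2$-basis.

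The hard part will be the bookkeeping at the endpoints of the chains rather than any single computation. One must verify in each case that the $\hh_{1,1}$-multiple surviving at the top of a chain is a genuine $d_1$-cycle which is not itself a boundary, and this is most delicate where the two diagrams \eqref{diagram-CESS-d1-1} and \eqref{diagram-CESS-d1-2} overlap along the $\bbeta_{p^2/\cdot}$ column and where the long $\wrr_{p^2-p}$ arrow links $\bbeta'_{i/p}$ to $\bbeta_{i-p+1/1}$. Throughout one must also keep every class strictly below $|\vv_2^{p^2+1}/v_1^{p^2}|$, since both the $2$-freeness of $B_{m+1}$ and the generating set \eqref{B-for-j=2} are only guaranteed in that range.
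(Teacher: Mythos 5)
Your proposal is correct and takes essentially the same route as the paper: the paper also runs the algebraic small descent spectral sequence for $B_{m+1}$ with $j=1$, uses the $2$-freeness from \autoref{lem-structure-of-B} to force collapse at $E_2$ so that only $d_1$ (given by $\wrr_{p}$ and $\wrr_{p^{2}-p}$) need be computed, takes homology chain by chain exactly as in \eqref{diagram-CESS-d1-1} and \eqref{diagram-CESS-d1-2}, and packages the result via $\vv_{2}^{p}$-periodicity. The only difference is presentational: the paper gives this argument as a summary, deferring the detailed endpoint bookkeeping to \cite[Propositions 7.3 and B.2]{NR:OnthePrimitiveBeta}.
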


Note that the range of dimensions 
({\ie} $|\vv_{2}^{p^2+1}/v_{1}^{p^2}|$) 
exceeds $p|\vv_{3}|$ for $m>0$.

\bigskip

Now we have determined the Cartan-Eilenberg $E_{2}$-term for $j=1$.
In the followings we will see that the {\SS} has a rich pattern of differentials, 
which is essentially independent of $m$.

For the differential 
\[
\tilde{d}_{2}: 
\tilde{E}_{2}^{s,1} = 
\CEE{s}{1}{U_{m+1}^{2}} \longrightarrow 
\tilde{E}_{2}^{s+2,0} = 
\CEE{s+1}{1}{B_{m+1}}
\]
we may ignore the $v_{2}$-torsion part of the source
({\ie}$\gamma$-elements) since the target is $v_{2}$-torsion free.  
For the other part, we have the following result\footnote{The result for $m=0$ was described in \cite[Lemma 7.3.12]{Rav:MU2nd}.}.

\begin{lem}\label{d2-CESS-j=1}
The {\CESS} of \autoref{eq-CESSpicture} for $j=1$ has the following differentials: 
\begin{enumerate}
\def\theenumi{\roman{enumi}}
\item\label{d2-CESS-j=1-(i)}
$\tilde{d}_{2}(\uu_{i})
= i v_{2}\hh_{1,1}\bbeta_{i+p-1/p}
$
for $i \nequiv 0$ mod $p$.  

\item\label{d2-CESS-j=1-(ii)}
$\tilde{d}_{2} (\hh_{1,1}\uu_{i})
= \displaystyle\binom{i}{p-1} v_{2}\bb_{1,1}\bbeta_{i+1/2}$
for $i \equiv -1$ mod $p$.  

\end{enumerate}

\noindent 
All differentials commute with multiplication by $\bb_{1,1}$.  
\end{lem}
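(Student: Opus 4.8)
The plan is to compute $\tilde{d}_{2}$ at the cochain level, by expanding the $G(m+1)$-coaction on an explicit representative and tracking where the leading essential term lands in the reorganized $t=0$ row. The guiding principle is a discrepancy between a class and its representative: by \autoref{cor-E2again} the operation $\wrr_{p}$ acts trivially on $U_{m+1}^{2}$, so each $\uu_{i}$ is a genuine class in $\tilde{E}_{2}^{0,1}=\CEE{0}{1}{U_{m+1}^{2}}$; yet its representative $\ttheta_{1,i}\in\Ext_{\Gamma(m+2)}^{0}(E_{m+1}^{1}/(v_{1}^{\infty}))$ is \emph{not} $G(m+1)$-invariant, and by \autoref{thm-uij} the obstruction $\wrr_{p}(\ttheta_{1,i})=iv_{2}\bbeta_{i+p-1/p}$ lands in $B_{m+1}\subseteq\ker\delta^{1}$. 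Since $B_{m+1}$ is exactly the module controlling the $t=0$ row through the resolution $\overline{E}_{m+1}^{1}\to W_{m+1}\to B_{m+1}$ of \autoref{E2term-CESS}, this $B_{m+1}$-valued obstruction is what $\tilde{d}_{2}$ detects; the drop from the $t=1$ row to the $t=0$ row forces the differential index to equal $2$. Moreover, in $\Tmbar{1}$-coefficients the powers $\wt_{1}^{a}$ with $a<p$ are absorbed into the coefficient module, so the only essential coaction terms are those with $\wt_{1}$-exponents divisible by $p$; this is precisely the bookkeeping built into the \SDSS{} with $\hh_{1,1}=[\wt_{1}^{p}]$.

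For (i), I would represent $\uu_{i}$ by $\ttheta_{1,i}$ and expand $\psi(\ttheta_{1,i})=\ttheta_{1,i}\otimes1+\cdots+\wt_{1}^{p}\otimes\wrr_{p}(\ttheta_{1,i})+\cdots$. Because $\wrr_{p^{2}}(\ttheta_{1,i})=0$ by \autoref{thm-uij}, the leading essential term is the $G(m+1)$-cobar $1$-cochain $\wt_{1}^{p}\otimes\wrr_{p}(\ttheta_{1,i})$, whose left factor $[\wt_{1}^{p}]$ represents $\hh_{1,1}$ and whose right factor $iv_{2}\bbeta_{i+p-1/p}$ lies in $B_{m+1}$. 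Modulo lower filtration this represents $iv_{2}\hh_{1,1}\bbeta_{i+p-1/p}\in\CEE{1}{1}{B_{m+1}}=\tilde{E}_{2}^{2,0}$, which gives the formula; the hypothesis $i\nequiv0$ mod $p$ is precisely the requirement that the coefficient $i$ be a unit.

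For (ii), I would represent $\hh_{1,1}\uu_{i}\in\tilde{E}_{2}^{1,1}$ by prepending $[\wt_{1}^{p}]$ and again expand the coaction on $\ttheta_{1,i}$. The $\wt_{1}^{p^{2}}$ term dies because $\wrr_{p^{2}}(\ttheta_{1,i})=0$, so the surviving contribution pairs the existing $[\wt_{1}^{p}]$ against $\wt_{1}^{(p-1)p}\otimes\wrr_{(p-1)p}(\ttheta_{1,i})$; the resulting $G(m+1)$-cobar $2$-cochain assembles, via the Massey-product pattern of \autoref{quillen-massey-rel}, into $\bb_{1,1}$ rather than collapsing through $\hh_{1,1}^{2}=0$. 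To evaluate $\wrr_{(p-1)p}(\ttheta_{1,i})$ I would use \autoref{lem-iteration-quillen}, which identifies it with $(p-1)!^{-1}$ times the $(p-1)$-fold iterate of $\wrr_{p}$; applying \autoref{thm-uij} once and then the $B_{m+1}$-rule $\wrr_{p}(\bbeta_{n/e})=n\,\bbeta_{n-1/e-1}$ repeatedly, the iterate telescopes,
\[
(\wrr_{p})^{p-1}(\ttheta_{1,i})=i(i+p-1)(i+p-2)\cdots(i+2)\,v_{2}\bbeta_{i+1/2}\equiv(p-1)!\binom{i}{p-1}v_{2}\bbeta_{i+1/2}\pmod p,
\]
so that $\wrr_{(p-1)p}(\ttheta_{1,i})=\binom{i}{p-1}v_{2}\bbeta_{i+1/2}$ and $\tilde{d}_{2}(\hh_{1,1}\uu_{i})=\binom{i}{p-1}v_{2}\bb_{1,1}\bbeta_{i+1/2}$. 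By Lucas' theorem $\binom{i}{p-1}$ is a unit exactly when $i\equiv-1$ mod $p$, the stated hypothesis, and for such $i$ one checks that no intermediate top index in the telescope is divisible by $p$, so the chain does not break. Finally, $\bb_{1,1}$ is a permanent cycle and the \CESS{} is multiplicative, so every differential commutes with multiplication by $\bb_{1,1}$.

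The main obstacle is the filtration bookkeeping that equates the abstract \CESS{} differential $\tilde{d}_{2}$ with the cochain computation above: one must verify that the $B_{m+1}$-valued coaction term sits in exactly the filtration into which the $t=0$ row was reorganized through the resolution of \autoref{E2term-CESS}, and that the absorption of the sub-$p$ powers $\wt_{1}^{a}$ ($a<p$) by the $\Tmbar{1}$-coefficients leaves no spurious lower-filtration contribution. The point requiring the most care, specific to (ii), is confirming that the surviving cobar $2$-cochain genuinely represents $\bb_{1,1}$ rather than a product of two $\hh_{1,1}$'s; making this precise means matching it against the defining cocycle for $\bb_{1,1}$ and pinning down the coefficient, which is where the real work lies.
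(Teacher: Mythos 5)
Your proposal is correct and takes essentially the same route as the paper: the paper's proof likewise works in the cobar complex over $G(m+1)$ for $\Tm{1}\otimes E_{m+1}^{1}/(v_{1}^{\infty})$ and deduces \eqref{d2-CESS-j=1-(i)} from $\wrr_{p}(\uu_{i})=iv_{2}\bbeta_{i+p-1/p}$ (\autoref{thm-uij}) and \eqref{d2-CESS-j=1-(ii)} from $\wrr_{p^{2}-p}(\uu_{i})=\binom{i}{p-1}v_{2}\bbeta_{i+1/2}$ via \autoref{lem-iteration-quillen}, which is exactly the telescoping computation you carry out. The one step you defer as ``the real work'' --- identifying the surviving $2$-cochain with a $\bb_{1,1}$-multiple rather than a square of $\hh_{1,1}$ --- is not re-derived in the paper either; it is covered by the small descent structure recalled at the start of this section ($d_{1}$ is induced by $r_{p^{j}}$ for $s$ even and by $r_{(p-1)p^{j}}$ for $s$ odd, the latter landing in $\bb_{1,1}$-multiples), so citing that suffices to close your remaining point.
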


\begin{proof}
We are considering the {\CESS} for $\Tm{1}\otimes_{}E_{m+1}^{1}$, 
and its $\Ext_{}^{s}$ for
$s>0$ is a quotient of (isomorphic to for $s>1$) $\Ext_{}^{s-1}$  for
$\Tm{1}\otimes_{}E_{m+1}^{1}/(v_{1}^{\infty})$, so we can work in
the cobar complex over $G(m+1)$ for the latter comodule.

The differential \eqref{d2-CESS-j=1-(i)} follows from 
$\wrr_{p} (\uu_{i}) = i v_{2}\bbeta_{i+p-1/p}$
given by \autoref{thm-uij}.  
We also have
$\wrr_{p^{2}-p}(\uu_{i}) = \binom{i}{p-1} v_{2}\bbeta_{i+1/2}$ 
and the differential \eqref{d2-CESS-j=1-(ii)}
by \autoref{lem-iteration-quillen}.
\end{proof}

Now the diagram \eqref{diagram-CESS-d1-1}) for $p=3$ is reviewed as follows.  
In each case the graph now has $2p+1$ instead of
$2p$ components, three of which are maximal: 
\setcounter{equation}{\value{thm}}
\begin{equation}\label{eq-graphm>0}
\begin{split}
\xymatrix@=15pt{
\bbeta'_{3/1} 
    & \bbeta'_{3/2} \ar[dl]_{\wrr_{3}} 
        & \bbeta'_{3/3} \ar[dl]_{\wrr_{3}}  \ar@/^1pc/[ddll]^{\wrr_{6}}  \\
\bbeta'_{2/1} 
    & \bbeta'_{2/2} \ar[dl]_{\wrr_{3}}  \\
\bbeta'_{1/1}
}
\xymatrix@=15pt{
    &   & \bbeta_{5/3} \ar[dl]_{\wrr_{3}} \ar@/_1.5pc/[ddll]_{\wrr_{6}} 
            & v_{2}^{-1}\uu_{2} \ar[dl]^{\tilde{d}_{2}} 
                                \ar@/_1pc/[ddll]_{\tilde{d}_{2}}  \\
    & \bbeta_{4/2} \ar[dl]_{\wrr_{3}} 
        & \bbeta_{4/3} \ar[dl]^{\wrr_{3}} 
            & v_{2}^{-1}\uu_{1} \ar[dl]^{\tilde{d}_{2}}  \\
\bbeta_{3/1} 
    & \bbeta_{3/2} 
        & \bbeta_{3/3} 
            & v_{2}^{-1}\uu_{0}
}
\end{split}
\end{equation}
\setcounter{thm}{\value{equation}}%
In fact, each $d_{1}$ in the {\SDSS} behaves as it were the Cartan-Eilenberg $\tilde{d}_{2}$.  
Note that the bigrading of elements in the {\SDSS} are 
$\bbeta \in E_{r}^{0,2}$, 
$\uu \in E_{r}^{0,2}$
and 
$\ggamma \in E_{r}^{0,3}$, 
and each operator has the same bigrading as that for {\CESS}.  
In general, 
the small descent $d_{r}$ correspond to the Cartan-Eilenberg $\tilde{d}_{r+1}$ for $r\geq 1$. 
See \autoref{Table-CESS-j=1}.

\begin{table}[h]
\renewcommand{\arraystretch}{1.5}
\caption[]{Bigradings of elements. 
Some subscripts have been omitted.}\label{Table-CESS-j=1}
\begin{tabular}{|c|c:c:c:c|}
\multicolumn{5}{c}{{\CESS} for $j=1$}  \\ \hline
$t=3$
  & $\bb_{2,0}\uu$
    & $\hh_{1,1}\bb_{2,0}\uu$
  & $\bb_{1,1}\bb_{2,0}\uu$
    & $\hh_{1,1}\bb_{1,1}\bb_{2,0}\uu$  \\ \cdashline{2-5}
$t=2$
  & $\hh_{2,0}\uu$
    & $\hh_{1,1}\hh_{2,0}\uu$
  & $\bb_{1,1}\hh_{2,0}\uu$
    & $\hh_{1,1}\bb_{1,1}\hh_{2,0}\uu$ \\ \cdashline{2-5}
$t=1$
  & $\uu$
    & $\hh_{1,1}\uu$
  & $\bb_{1,1}\uu$
    & $\hh_{1,1}\bb_{1,1}\uu$  \\ \cdashline{2-5}
$t=0$
  & $*$
    & $\bbeta$
  & $\hh_{1,1}\bbeta$
    & $\bb_{1,1}\bbeta$ \\ \hline
  & \multicolumn{1}{c}{$s=0$}
    & \multicolumn{1}{c}{$s=1$}
  & \multicolumn{1}{c}{$s=2$}
    & $s=3$  \\ \hline 
\multicolumn{5}{c}{}    \\
\multicolumn{5}{c}{{\SDSS} for $j=1$}  \\ \hline
$t=4$ 
  &$\bb_{2,0}\uu$ 
    & $\hh_{1,1}\bb_{2,0}\uu$
     & $\bb_{1,1}\bb_{2,0}\uu$
       & $\hh_{1,1}\bb_{1,1}\bb_{2,0}\uu$ \\ \cdashline{2-5}
$t=3$
  &$\hh_{2,0}\uu$ 
    & $\hh_{1,1}\hh_{2,0}\uu$
      & $\bb_{1,1}\hh_{2,0}\uu$
        & $\hh_{1,1}\bb_{1,1}\hh_{2,0}\uu$  \\ \cdashline{2-5}
$t=2$ 
  & $\uu$ 
    & $\hh_{1,1}\uu$
      & $\bb_{1,1}\uu$
        & $\hh_{1,1}\bb_{1,1}\uu$  \\
  & $\bbeta $ 
    & $\hh_{1,1}\bbeta$
      & $\bb_{1,1}\bbeta$
        & $\hh_{1,1}\bb_{1,1}\bbeta$  \\ \cdashline{2-5}
$t=1$ 
  & $*$ 
    & & &  \\ \hline
  & \multicolumn{1}{c}{$s=0$}
    & \multicolumn{1}{c}{$s=1$}
  & \multicolumn{1}{c}{$s=2$}
    & $s=3$  \\ \hline 
\end{tabular}
\end{table}

\begin{rem}\label{rem-renaming}
In \eqref{eq-graphm>0} the ``virtual'' element 
$v_{2}^{-1}\uu_{i}$ 
lives in $\CEE{0}{1}{U^{0}}$
but not in $\CEE{0}{1}{U_{m+1}^{2}}$. 
This means that 
$\hh_{1,1}\bb_{1,1}^{k}\bbeta_{i+p-1/p}$ 
is not actually trivial but $v_{2}$-torsion, 
and that it is chromatically renamed 
$\vv_{2}^{i}\bb_{1,1}^{k}\ggamma_{1}$.  
This is a feature of the cases $m \ge 0$
and it does not happen for $m=0$.  
For example, in the {\CSS} we have 
\begin{align*}
d_{e} (v_{2}^{-1}\uu_{1})
& = 
d_{e}
\left(
\Frac{v_{2}^{-1}\vv_{2}\vv_{3}}{pv_{1}}
- \Frac{\vv_{2}^{p+1}}{pv_{1}^{p+1}} 
\right)
= 
\Frac{\vv_{2}\vv_{3}}{pv_{1}v_{2}} 
= \vv_{2}\ggamma_{1}  \\
\mbox{and}
\qquad
d_{i} (v_{2}^{-1}\uu_{1}) 
& = 
- \Frac{\vv_{2}^{p}\wt_{1}^{p}}{pv_{1}^{p}}
- \Frac{v_{2}^{p^{m+1}-1}\vv_{2}\wt_{1}}{pv_{1}} 
\equiv 
-\hh_{1,1}\bbeta_{p/p}.
\end{align*}
The second term in $d_{i}$ is the product of $\wt_{1}$ with an invariant element $x$.  
It is ignored because we are working in $T(m)_{(1)}$; 
it is the coboundary of $\wt_{1}\otimes_{}x$.

It is also observed that 
$\bb_{1,1}^{k+1}\bbeta_{ip/2}$ 
is renamed 
$\vv_{2}^{ip-1}\hh_{1,1}\bb_{1,1}^{k}\ggamma_{1}$.  
For example, we have 
\begin{align*}
d_{e}(v_{2}^{-1}\hh_{1,1}\uu_{p-1}) 
& = 
d_{e}
\left(
\wt_{1}^{p}
\left(
\Frac{v_{2}^{-1}\vv_{2}^{p-1}\vv_{3}}{pv_{1}}
- \Frac{\vv_{2}^{2p-1}}{pv_{1}^{p+1}} 
\right)
\right)
= \Frac{\vv_{2}^{p-1}\vv_{3}\wt_{1}^{p}}{pv_{1}v_{2}} 
= \vv_{2}^{p-1}\hh_{1,1}\ggamma_{1}  \\
\mbox{and}
\quad
d_{i}(v_{2}^{-1}\hh_{1,1}\uu_{p-1}) 
& = - \wt_{1}^{p} \otimes \Frac{\vv_{2}^{p}\wt_{1}^{p^{2}-p}}{pv_{1}^{2}} + \cdots
= \bb_{1,1}\bbeta_{p/2}.
\end{align*}
\end{rem}

The following result concerns higher Cartan-Eilenberg differentials, 
and we will prove it in the next section.

\begin{thm}\label{thm-7.3.15}
The {\CESS} of \autoref{eq-CESSpicture} for $j=1$ 
has the following differentials 
and no others in our range of dimensions:
\begin{enumerate}
\def\theenumi{\roman{enumi}}
\item\label{thm-7.3.15-1}
$\tilde{d}_{3}(\hh_{2,0}\uu_{i}) = v_{2}\bb_{1,1}\bbeta'_{i+1}$
for $i \nequiv 0$ mod $p$.  

\item\label{thm-7.3.15-2}
$\tilde{d}_{3} (\hh_{2,0}^{\varepsilon }\bb_{2,0}^{k}\uu_{i})
= v_{2}\hh_{1,1}\bb_{1,1}\hh_{2,0}^{\epsilon}\bb_{2,0}^{k-1}\uu_{i-1}$
for $i \nequiv 0$ mod $p$, 
$k \ge 1$ and $\varepsilon =0$ or $1$.  

\item\label{thm-7.3.15-3}
$\tilde{d}_{2k+3}(\hh_{1,1}\hh_{2,0}\bb_{2,0}^{k}\uu_{i}) = v_{2}^{k+1}\hh_{1,1}\bb_{1,1}^{k+1}\bbeta'_{i+1/k+1}$
for $i \equiv -1$ mod $p$ and $0\leq k<p-1$.

\item\label{thm-7.3.15-4}
$\tilde{d}_{2k+2}(\hh_{1,1}\bb_{2,0}^{k}\uu_{i}) = v_{2}^{k+1}\bb_{1,1}^{k+1}\bbeta_{i+1/k+2}$
for $i \equiv -1$ mod $p$ and $1\leq k<p-1$
(the case $k=0$ is \autoref{d2-CESS-j=1}(ii)).

\item\label{thm-7.3.15-5}
$\tilde{d}_{2p-1}(\hh_{1,1}\bb_{2,0}^{p-1}\uu_{i}) = v_{2}^{p-1}\bb_{1,1}^{p}\uu_{i-p+1}$
for $i \equiv -1$ mod $p$.
\end{enumerate}
All differentials commute with multiplication by $\bb_{1,1}$.  
\end{thm}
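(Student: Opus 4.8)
The plan is to compute everything inside the cobar complex of $\Tm{1}\otimes E_{m+1}^{1}/(v_{1}^{\infty})$ over $G(m+1)=A(m+1)[\wt_{1}]$, and to organize the bookkeeping through the translation recorded before the statement, whereby the small descent $d_{r}$ is the Cartan--Eilenberg $\tilde{d}_{r+1}$. Since the small descent $d_{1}$ is literally the Quillen operation $\wrr_{p}$ (for even $s$) and $\wrr_{p^{2}-p}$ (for odd $s$), the computational seed is furnished by \autoref{thm-uij}, \autoref{comodulestructure-u} and \autoref{lem-iteration-quillen}, which already yield \autoref{d2-CESS-j=1}, i.e. $\tilde{d}_{2}(\uu_{i})=iv_{2}\hh_{1,1}\bbeta_{i+p-1/p}$ and $\tilde{d}_{2}(\hh_{1,1}\uu_{i})=\binom{i}{p-1}v_{2}\bb_{1,1}\bbeta_{i+1/2}$. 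Everything in the theorem is to be propagated from these two relations by the multiplicative structure over $E(\hh_{2,0},\hh_{1,1})\otimes P(\bb_{2,0},\bb_{1,1})$.

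First I would treat the $\tilde{d}_{3}$-differentials \eqref{thm-7.3.15-1}--\eqref{thm-7.3.15-2}. The point is that $\hh_{2,0}$ is a permanent cycle that annihilates the $t=0$ row (there is no $\hh_{2,0}$-action on the $B_{m+1}$-part), so $\tilde{d}_{2}(\hh_{2,0}\uu_{i})=\hh_{2,0}\tilde{d}_{2}(\uu_{i})=0$ and $\hh_{2,0}\uu_{i}$ survives to $E_{3}$. To evaluate its $\tilde{d}_{3}$ I would choose a cobar cochain lifting $\hh_{2,0}\uu_{i}$ (built from $[\wt_{2}]\otimes\ttheta_{1,i}$ together with a chain witnessing $\wrr_{p}(\ttheta_{1,i})$) and read the leading term of its coboundary in the $\wt_{1}$-adic filtration; the $\wrr_{p}$-relation together with $\bb_{1,1}$-linearity forces the answer $v_{2}\bb_{1,1}\bbeta'_{i+1}$, and the $\bb_{2,0}^{k}$-version \eqref{thm-7.3.15-2} is the same computation multiplied by $\bb_{2,0}^{k}$.

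For the $\bb_{2,0}$-towers \eqref{thm-7.3.15-3}--\eqref{thm-7.3.15-4} I would argue by induction on the power $k$ of $\bb_{2,0}$, using the Massey-product reinterpretation of \autoref{quillen-massey-rel} (the iterated brackets $\specialmassey{i}{-}=\langle\hh_{1,1},\dots,\hh_{1,1},-\rangle$) together with the fact that $\bb_{2,0}$ is the $p$-fold Massey product of $\hh_{2,0}$. The mechanism is that $\bb_{2,0}$ annihilates the $t=0$ target (again because there is no $\bb_{2,0}$-action on the $B_{m+1}$-part), so multiplying a class that died at stage $r$ by $\bb_{2,0}$ produces a class that is once more a $\tilde{d}_{r}$-cycle and must fall later; a matric-Massey-product / leading-term coboundary computation then identifies the next differential, lengthened by two, with the target acquiring an extra factor $v_{2}\bb_{1,1}$ and a unit shift of the $\bbeta$-index. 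Because all the $m$-dependent corrections in \autoref{prop-vv-and-llambda} lie below the filtration controlling these leading terms, the resulting pattern is the same for every $m>0$, which is the precise sense in which it is ``essentially independent of $m$''.

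The hard part will be \eqref{thm-7.3.15-5}, the differential $\tilde{d}_{2p-1}(\hh_{1,1}\bb_{2,0}^{p-1}\uu_{i})=v_{2}^{p-1}\bb_{1,1}^{p}\uu_{i-p+1}$ of length $2p-1$, the algebraic shadow of the Toda differential $d_{2p-1}(\beta_{p/p})=\alpha_{1}\beta_{1}^{p}$ of the introduction: it is the top case $k=p-1$ of the tower, it lands back in the $U$-part rather than among the $\bbeta$'s, and here the inductive $\bb_{2,0}$-shift terminates because the divided-power tower on $\bb_{2,0}$ closes up at the $p$-th stage. For this one I expect to need an explicit defining system for the full iterated Massey product and to track a single non-vanishing leading term through all $2p-1$ stages, the crux being that the accumulated binomial coefficients (controlled mod $p$ by \autoref{lem-iteration-quillen}) do not vanish. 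Finally, the exhaustiveness clause (``no others in our range'') I would settle by a Poincar\'e-series count: the listed differentials already reduce the $E_{2}$-chart of \autoref{Table-CESS-j=1} to a group compatible with the known structure of $\Ext^{0}$ and $\Ext^{1}$ from \autoref{prop-H(C)}, and a comparison of total degrees shows there is no room for any further $\tilde{d}_{r}$ below dimension $p|\vv_{3}|$.
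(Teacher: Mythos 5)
Your overall two-pronged shape (direct cobar computation for the $\tilde{d}_{3}$'s, Massey products propagated multiplicatively for the longer differentials) does match the paper's strategy, but there is a genuine gap at \eqref{thm-7.3.15-2}, and it propagates. Part \eqref{thm-7.3.15-2} is \emph{not} ``the same computation as \eqref{thm-7.3.15-1} multiplied by $\bb_{2,0}^{k}$'': the target of \eqref{thm-7.3.15-1} is the $\bbeta$-class $v_{2}\bb_{1,1}\bbeta'_{i+1}$, whereas the target of \eqref{thm-7.3.15-2} is the $\uu$-class $v_{2}\hh_{1,1}\bb_{1,1}\hh_{2,0}^{\varepsilon}\bb_{2,0}^{k-1}\uu_{i-1}$, with the index dropped from $i$ to $i-1$. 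Since, as you note yourself, $\bb_{2,0}$ kills the $t=0$ row, multiplying \eqref{thm-7.3.15-1} by $\bb_{2,0}^{k}$ only tells you that $\hh_{2,0}^{\varepsilon}\bb_{2,0}^{k}\uu_{i}$ is a cycle for the ``old'' differential; it does not compute the new one. Moreover no leading-term bookkeeping in the complex you chose can do so, because the mechanism that trades a $\bb_{2,0}$-multiple for a $v_{2}\bb_{1,1}$-multiple lives outside $G(m+1)$: it is the coproduct formula $\Delta(\overline{\wt}_{3}) = \overline{\wt}_{3}\otimes 1 + 1\otimes\overline{\wt}_{3} - v_{1}\bb_{2,0} - v_{2}\bb_{1,1} + \cdots$ of \autoref{eq-dt3}, used in a fresh cobar computation over $\Gamma(m+1)$ — the paper works in $C_{\Gamma(m+1)}(\Tm{1}\otimes N^{2})$, not over $G(m+1)=A(m+1)[\wt_{1}]$, where $\wt_{2}$ and $\wt_{3}$ do not exist at all (your own proposed cochain $[\wt_{2}]\otimes\ttheta_{1,i}$ is inconsistent with your declared ambient complex). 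The passage from $k=1$ to general $k$ is then done by tensoring with the cocycle $\bb'_{2,0}=p^{-1}\bigl(v_{1}^{p}\bb_{1,1}+d(\wt_{2}^{p})\bigr)$ of \autoref{lem-b20}, which exists only for $m>0$. Because both your induction and the paper's Massey-product derivation of \eqref{thm-7.3.15-3}--\eqref{thm-7.3.15-5} feed on \eqref{thm-7.3.15-2} (the sequences of \autoref{fig-moreexamples} alternate the $\tilde{d}_{3}$'s of \eqref{thm-7.3.15-2} with the $\tilde{d}_{2}$'s of \autoref{d2-CESS-j=1}), the entire higher-differential pattern is unsupported in your write-up. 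Incidentally, \eqref{thm-7.3.15-5} is not the special hard case you anticipate: once the sequence terminates at $v_{2}^{p-1}\bb_{1,1}^{p-1}\uu_{0}$, it follows by exactly the same juggling as \eqref{thm-7.3.15-4}, using $\hh_{1,1}\mu_{p-1}(x)=\bb_{1,1}x$; the real analytic content of the theorem sits in \eqref{thm-7.3.15-1} and \eqref{thm-7.3.15-2}.

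Your exhaustiveness argument is also circular. There is no independent determination of $\Ext_{\Gamma(m+1)}^{n}(\Tm{1})$ for $n\ge 2$ against which to run a Poincar\'e-series count — that group is precisely what the spectral sequence is computing, and \autoref{prop-H(C)} covers only $n=0,1$. The paper's ``no others'' claim rests on structure instead: every listed source lies in Cartan--Eilenberg filtration $s=0$ or $s=1$, hence cannot be a target; each listed differential already has the maximal length permitted by the bidegree of its source; and the remaining generators (the $\ggamma_{\ell}$'s, the $\uu_{p/k}$'s, and all their products with the operators) are permanent cycles by explicit cocycle constructions — this is where \autoref{lem-b20} enters a second time, since for $m>0$ the product of any permanent cycle with $\bb_{2,0}$ is represented by an honest cocycle multiple of $\bb'_{2,0}$ and is therefore again a permanent cycle. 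If you want to keep your argument, you must replace the counting step by this kind of permanent-cycle analysis.
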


We will prove \autoref{thm-7.3.15} in the next section.  

\bigskip

Since each source of the stated differentials lies in $\tilde{E}_{r}^{0,*}$ or $\tilde{E}_{r}^{1,*}$, 
it cannot be the target of another differential.
Moreover, each differential has maximal length for the bidegree of its source. 
Thus, the source should be a permanent cycle if a differential is trivial.

\begin{rem}\label{rem-filtration}
We can define a decreasing filtration on $B_{m+1}$ and $U_{m+1}$ by
\[
||\bbeta'_{i/j}|| = i-j-1,
\quad 
||\uu_{i}|| = i+[i/p],
\quad
\mbox{and}
\quad
||p||=||v_{1}||=||v_{2}||=1.
\]
Then the source and target of each differential listed in \autoref{thm-7.3.15} have the same filtration.  
A similar filtration for $m=0$ is discussed in \cite[Lemma 7.4.6]{Rav:MU2nd}.  
In \eqref{eq-graphm>0} all elements along the same diagonal
(e.g., $\bbeta_{2}$, $\bbeta '_{3/2}$, $\bbeta_{3/3}$ and $v_{2}^{-1}\uu_{1}$ in filtration $0$)
have the same filtration. 
\end{rem}

\begin{rem}\label{rem-renaming-2}
Again, we obtained the differentials of the form 
$d_{r}(x) = v_{2}^{t}y$, 
each of which doesn't kill $y$ but makes $y$ into a $v_{2}^{t}$-torsion element, 
as we have already seen in \autoref{rem-renaming}.
For example, the differential in \eqref{thm-7.3.15-1} means that 
$\bb_{1,1}\bbeta'_{i+1}$ is killed by $v_{2}$; 
in the chromatic cobar complex we have
\[
d (v_{2}^{-1}\hh_{2,0}\uu_{i}) 
= - \bb_{1,1}\bbeta'_{i+1} \pm \vv_{2}^{i}\hh_{2,0}\ggamma_{1},
\]
so $\pm \vv_{2}^{i}\hh_{2,0}\ggamma_{1}$ is the new name for $\bb_{1,1}\bbeta'_{i+1}$. 
Similarly,  
$\hh_{1,1}\bb_{1,1}\hh_{2,0}^{\varepsilon}\bb_{2,0}^{k-1}\uu_{i-1}$
is renamed 
$\vv_{2}^{i}\hh_{2,0}^{\varepsilon}\bb_{2,0}^{k}\ggamma_{1}$ 
by \eqref{thm-7.3.15-2}, 
and 
$\hh_{1,1}\bb_{1,1}\bbeta'_{p}$ 
is renamed 
$\vv_{2}^{p-1}\hh_{1,1}\hh_{2,0}\ggamma_{1}$
by \eqref{thm-7.3.15-3}.  
\end{rem}

There are some patterns of differentails 
associated with each component of \eqref{eq-graphm>0}, 
which we now demonstrate for $p=3$.  
For example, for $\bbeta_{2}$ we have the following diagram: 
\[
\xymatrix@=15pt{
\fbox{$\bbeta_{2}$} & \hh_{1,1}\bbeta_{2} & \underline{\bb_{1,1}\bbeta_{2}} & \hh_{1,1}\bb_{1,1}\bbeta_{2} & \underline{\bb_{1,1}^{2}\bbeta_{2}} & \hh_{1,1}\bb_{1,1}^{2}\bbeta_{2}  \\
\bbeta'_{3/2} \ar[ur]^{\wrr_{p}} & 
\fbox{$\hh_{1,1}\bbeta'_{3/2}$} & \bb_{1,1}\bbeta'_{3/2} \ar[ur]^{\wrr_{p}} & \fbox{$\hh_{1,1}\bb_{1,1}\bbeta'_{3/2}$} & \bb_{1,1}^{2}\bbeta'_{3/2} \ar[ur]^{\wrr_{p}} & \underline{\hh_{1,1}\bb_{1,1}^{2}\bbeta'_{3/2}}   \\
 & x_{1} \ar[uur]_{\tilde{d}_{3}} & \fbox{$\hh_{1,1}x_{1}$} & \bb_{1,1}x_{1} \ar[uur]_{\tilde{d}_{3}} & \underline{\hh_{1,1}\bb_{1,1}x_{1}}  \\
 & & & x_{2} \ar[ur]^{\tilde{d}_{3}} & \hh_{1,1}x_{2} \ar[uur]_{\tilde{d}_{5}}
}
\]
where $x_{k}=v_{2}^{-k}\hh_{2,0}\bb_{2,0}^{k-1}\uu_{k}$, and
the boxed elements are permanent in the {\CESS}.  
The underlined elements indeed survive, however, each of these changes into $v_{2}$-torsion element 
(cf.~\autoref{rem-renaming} and \ref{rem-renaming-2}).  
It is also observed that 
$\hh_{1,1}\bbeta'_{3/2}$, $\bb_{1,1}\bbeta_{2}$ and $\hh_{1,1}\bb_{1,1}\bbeta'_{3/2} $
correspond to the Massey products 
$\specialmassey{2}{\bbeta_{2}}$, 
$\specialmassey{1}{\specialmassey{2}{\bbeta_{2}}}$
and 
$\specialmassey{2}{\specialmassey{1}{\specialmassey{2}{\bbeta_{2}}}}$
respectively 
(see \autoref{quillen-massey-rel}).

Similarly, for $\bbeta_{3/3}$ we have the following diagram: 
\[
\xymatrix@=15pt{
\fbox{$\bbeta_{3/3}$} & \underline{\hh_{1,1}\bbeta_{3/3}} & \fbox{$\bb_{1,1}\bbeta_{3/3}$} & \underline{\hh_{1,1}\bb_{1,1}\bbeta_{3/3}} & \underline{\bb_{1,1}^{2}\bbeta_{3/3}}  \\
y_{1} \ar[ur]^{\wrr_{p}} & \fbox{$\hh_{1,1}y_{1}$} & \bb_{1,1}y_{1} \ar[ur]^{\wrr_{p}} & \underline{\hh_{1,1}\bb_{1,1}y_{1}}   \\
 & & y_{2} \ar[ur]^{\tilde{d}_{3}} & \hh_{1,1}y_{2} \ar[uur]_{\tilde{d}_{4}}
}
\]
where $y_{k}=v_{2}^{-k}\bb_{2,0}^{k-1}\uu_{k}$
and $\hh_{1,1}\bbeta_{3/3}$ is renamed $\vv_{2}\ggamma_{1}$, 
and for $\bbeta_{3/2}$ we also have the following diagram: 
\[
\xymatrix@=15pt{
\fbox{$\bbeta_{3/2}$} & \hh_{1,1}\bbeta_{3/2} & \underline{\bb_{1,1}\bbeta_{3/2}}  \\
\bbeta_{4/3} \ar[ur]^{\wrr_{3}} &\underline{ \hh_{1,1}\bbeta_{4/3}}  \\
z \ar[ur]^{d_{2}} & \hh_{1,1}z \ar[uur]_{d_{2}}
}
\]
where 
$z=v_{2}^{-1}\uu_{2}$, 
and we have 
$\hh_{1,1}\bbeta_{4/3} = \specialmassey{2}{\bbeta_{3/2}}$.

\bigskip

Finally, we have the following result:  

\begin{thm}\label{CESS-Einfty}
Below dimension $p|\vv_{3}|$, 
the Cartan-Eilenberg $\tilde{E}_{\infty}$-term of \autoref{eq-CESSpicture} for $j=1$
is the direct sum of the followings: 

\begin{enumerate}
\def\theenumi{\roman{enumi}}
\item\label{CESS-Einfty-1}
the $A(m+1)/I_{2} \otimes P(\vv_{2}^{p})$-module
generated by 
\[
\begin{array}{c}
\left\{ 
\bbeta'_{1}, \bbeta'_{2}, \ldots, \bbeta'_{p};
\bbeta_{p/1}, \bbeta_{p/2};
\hh_{1,1}\bbeta'_{p}
\right\}  \\
\oplus  \\
P(\bb_{1,1}) \otimes 
\left\{ 
\hh_{1,1}\bbeta'_{p/p-i+1}, 
\bbeta_{p/j}
\mid 2 \le i \le p-1, 3 \le j \le p
\right\}   \\
\oplus  \\
E(\hh_{2,0}) \otimes P(\bb_{2,0}) \otimes 
\left(
\begin{array}{c}
P(\bb_{1,1}) \otimes 
\{ \uu_{0} \}  \\
\oplus  \\
\Big\{ \hh_{1,1}\uu_{i} \mid 0 \le i \le p-2 \Big\} 
\end{array}
\right);
\end{array} 
\]

\item\label{CESS-Einfty-2}
the $A(m+1)/I_{3} \otimes P(\vv_{2}^{p})$-module generated by 
\[
\begin{array}{c}
E(\hh_{2,0}) \otimes P(\bb_{1,1},\bb_{2,0}) \otimes 
\left.
\left(
\begin{array}{c}
E(\hh_{1,1}) \otimes 
\{ \vv_{2}^{p-1}\ggamma_{1} \} \\
\oplus  \\
\{ \vv_{2}^{i}\ggamma_{1} \mid 2 \le i \le p-2 \} \\
\oplus   \\
\{ \vv_{2}\ggamma_{1} \}  \\
\end{array}
\right)
\right/ 
\left(
\begin{array}{c}
\vv_{2}^{p-1}\hh_{1,1}\bb_{2,0}\ggamma_{1}, \\
\vv_{2}^{p-1}\bb_{1,1}\bb_{2,0}\ggamma_{1}, \\
\vv_{2}^{i}\bb_{1,1}\bb_{2,0}\ggamma_{1}
\end{array}
\right) 
\end{array}
\]
where the second summand is only for $p \ge 5$;

\item\label{CESS-Einfty-3}
the $A(m+1)/I_{2}$-module generated by 
\[
\begin{array}{c}
\left\{ \bbeta_{p^{2}/k} \mid 1 \le k \le p^2-p+1 \right\}  \\
\oplus  \\
P(\bb_{1,1}) \otimes 
\left(
\begin{array}{c}
\left\{ 
\bbeta_{p^{2}/p^{2}-\ell}, 
\hh_{1,1} \bbeta_{p^2+\ell/p^2} 
\mid 0 \le \ell \le p-2
\right\}  \\
\oplus  \\
E(\hh_{1,1}, \hh_{2,0}) \otimes P(\bb_{2,0}) \otimes 
\Big\{ \uu_{p/k} \mid 2 \le k \le p \Big\} 
\end{array}
\right); 
\mbox{and}
\end{array}
\]

\item\label{CESS-Einfty-4}
the $A(m+2)/I_{3}$-module generated by 
\[
E(\hh_{1,1}, \hh_{2,0}) \otimes P(\bb_{1,1}, \bb_{2,0}) \otimes 
\Big\{ \ggamma_{\ell} \mid \ell \ge 2 \Big\}.
\]
\end{enumerate}

\end{thm}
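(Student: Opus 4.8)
The plan is to take the complete description of the spectral sequence supplied by the preceding results and simply compute homology. By \autoref{cor-ExtU} the rows $t\ge1$ of the $E_2$-term $\tilde{E}_2$ are $E(\hh_{2,0},\hh_{1,1})\otimes P(\bb_{2,0},\bb_{1,1})$ tensored with the $A(m+1)/I_2$-module on $\{\uu_i,\uu_{p/k}\}$ and the $A(m+2)/I_3$-module on $\{\ggamma_\ell\}$, while by \autoref{prop-ExtB} the row $t=0$ is $\CEE{*}{1}{B_{m+1}}$; together with \autoref{d2-CESS-j=1} and \autoref{thm-7.3.15}, whose final clause asserts that these are all the differentials in our range, this determines everything. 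The first step is to observe that every differential commutes with multiplication by $\bb_{1,1}$ and respects the splitting of $\tilde{E}_2$ into the connected components of the differential graph \eqref{eq-graphm>0}, and that the entire configuration is $\vv_2^p$-periodic; hence it suffices to compute the homology of one $\bb_{1,1}$-tower in each component, obtaining $P(\vv_2^p)$-modules.

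The second step is to dispose of the classes that take no part in any differential. The elements $\uu_{p/k}$ ($2\le k\le p$) carry trivial Quillen operations by \autoref{lem-u_{p/k}}, hence are permanent cycles, and they occur in no target on the lists of \autoref{d2-CESS-j=1} and \autoref{thm-7.3.15}; together with the $\bbeta_{p^2/k}$ block of \autoref{prop-ExtB}, which lies in a separate family of components, they account for summand \eqref{CESS-Einfty-3}. Likewise the $v_2$-torsion classes $\ggamma_\ell$ with $\ell\ge2$ are neither sources nor targets, so by the ``no others'' clause they survive freely and produce summand \eqref{CESS-Einfty-4}. For the remaining $\bbeta'$-, $\bbeta_{p/i}$- and $\uu_i$-components I would generalize the $p=3$ diagrams displayed just after \autoref{thm-7.3.15}, reading off from \autoref{d2-CESS-j=1} and parts \eqref{thm-7.3.15-1}--\eqref{thm-7.3.15-5} of \autoref{thm-7.3.15} exactly which class $\hh_{2,0}^{\varepsilon}\bb_{2,0}^k\uu_i$ cancels which $\bb_{1,1}$-multiple of a $\bbeta$-class along each diagonal. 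The maximal components of \eqref{eq-graphm>0} contribute the free summands $P(\vv_2^p)\otimes\{\bbeta'_1,\bbeta_{p/1}\}$ and $P(\bb_{1,1})\otimes\{\uu_0\}$, and the $v_2$-torsion-free survivors together assemble into summand \eqref{CESS-Einfty-1}.

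The main obstacle will be summand \eqref{CESS-Einfty-2}. Many classes in the $t=0$ row survive the spectral sequence yet are annihilated by a power of $v_2$, because the differentials \eqref{thm-7.3.15-1}, \eqref{thm-7.3.15-2} and \eqref{thm-7.3.15-3} all have the shape $\tilde{d}_r(x)=v_2^t y$: they do not kill $y$ but make it $v_2^t$-torsion. As explained in \autoref{rem-renaming} and \autoref{rem-renaming-2}, each such $y$ must be chromatically renamed as a multiple of $\ggamma_1$, and the delicate point is to track the resulting $v_2$-exponents against the lengths of the $\bb_{1,1}$- and $\bb_{2,0}$-towers. Carrying this out through the $E(\hh_{2,0})\otimes P(\bb_{1,1},\bb_{2,0})$-action, and verifying that the relations so produced are exactly $\vv_2^{p-1}\hh_{1,1}\bb_{2,0}\ggamma_1$, $\vv_2^{p-1}\bb_{1,1}\bb_{2,0}\ggamma_1$ and $\vv_2^i\bb_{1,1}\bb_{2,0}\ggamma_1$ and no others, is the heart of the argument. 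Once \eqref{CESS-Einfty-2} is in hand, the four summands \eqref{CESS-Einfty-1}--\eqref{CESS-Einfty-4} exhaust $\tilde{E}_\infty$ below dimension $p|\vv_3|$, completing the proof.
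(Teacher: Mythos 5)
Your proposal takes essentially the same route as the paper: the paper states \autoref{CESS-Einfty} with no separate proof, presenting it as the outcome of computing homology of the $E_{2}$-term given in \autoref{cor-ExtU} and \autoref{prop-ExtB} with respect to the complete list of differentials in \autoref{d2-CESS-j=1} and \autoref{thm-7.3.15}, organized exactly as you describe --- component by component in \eqref{eq-graphm>0}, using commutation with $\bb_{1,1}$, $\vv_{2}^{p}$-periodicity, survival of the non-participating classes ($\uu_{p/k}$, the $\bbeta_{p^{2}}$-block, and $\ggamma_{\ell}$ for $\ell \ge 2$), and the chromatic renaming of \autoref{rem-renaming} and \autoref{rem-renaming-2} that produces summand \eqref{CESS-Einfty-2}. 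The paper's own justification is no more detailed than yours (it illustrates the bookkeeping with the $p=3$ diagrams and leaves the general case implicit), so your plan, including flagging the $v_{2}$-torsion renaming as the delicate point, matches its argument.
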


\begin{rem}
\autoref{thm-7.3.15} \eqref{thm-7.3.15-3} and \eqref{thm-7.3.15-4} 
mean that some elements in the second summand of \autoref{CESS-Einfty} \eqref{CESS-Einfty-1} have higher $v_{2}$-torsion.  
They should be renamed chromatically so as to be realized explicitly that they are $v_{2}$-torsion.  
\end{rem}

Now we have computed 
$\Ext_{\Gamma(m+1)}^{n}(\Tm{1})$
for $n \ge 2$.  
There is no Adams-Novikov differential in this range 
because the first element in filtration $\ge 2p+1$ is 
$\vv_{2}\bb_{1,1}^{p-1}\ggamma_{1}$,
which is not killed by $d_{2p-1}$.
Thus, the {\ANSS} for $T(m)_{(1)}$ collapses 
and \autoref{CESS-Einfty} gives us the stable homotopy groups of $T(m)_{(1)}$.  

\bigskip

The elements for $(p,m)= (3,1)$ are listed in \autoref{fig-2}
and depicted in \autoref{fig-1}.

\begin{figure}[htbp]
\begin{displaymath}
\begin{array}{ccc}
\mbox{\begin{tabular}{|r|c|} \hline 
$t-s$ & Element\\ 
\hline 
\hline 
46  & $\bbeta_{1}$ \\
\hline 
98  & $\bbeta_{2}$ \\
\hline 
142  & $\bbeta_{3/3}$ \\
\hline 
146  & $\bbeta_{3/2}$ \\
\hline 
150  & $\bbeta_{3}$ \\
 & $\bbeta'_{3}$ \\
\hline 
154  & $\uu_{0}$ \\
\hline 
189  & $\vv_{2}\ggamma_{1}$ \\ \hline 
193  & $\hh_{1,1}\bbeta'_{3/2}$ \\
\hline 
197  & $\hh_{1,1}\bbeta'_{3}$ \\
\hline 
201  & $\hh_{1,1}\uu_{0}$ \\
\hline 
202  & $\bbeta_{4}$ \\
\hline 
205  & $\hh_{2,0}\uu_{0}$ \\
\hline 
240  & $\vv_{2}\hh_{2,0}\ggamma_{1}$ \\
\hline 
241  & $\vv_{2}^{2}\ggamma_{1}$ \\
\hline 
252  & $\hh_{1,1}\hh_{2,0}\uu_{0}$ \\
\hline 
253  & $\hh_{1,1}\uu_{1}$ \\
\hline 
254  & $\bbeta_{5}$ \\
\hline 
284  & $\bb_{1,1}\bbeta_{3/3}$ \\
\hline 
288  & $\vv_{2}^{2}\hh_{1,1}\ggamma_{1}$ \\
\hline
292  & $\vv_{2}^{2}\hh_{2,0}\ggamma_{1}$ \\
\hline 
\end{tabular}}
&\quad
\mbox{\begin{tabular}{|r|c|}
\hline 
$t-s$ & Element\\ 
\hline 
\hline 
296  & $\bb_{1,1}\uu_{0}$ \\
\hline 
297  & $\ggamma_{2}$ \\
\hline 
298  & $\bbeta_{6/3}$ \\
\hline 
302  & $\bbeta_{6/2}$ \\
\hline 
304  & $\hh_{1,1}\hh_{2,0}\uu_{1}$ \\
\hline 
306  & $\bbeta'_{6}$ \\
 & $\bbeta_{6}$ \\
\hline 
308  & $\bb_{2,0}\uu_{0}$ \\
\hline 
310  & $\uu_{3}$ \\
\hline 
331  & $\vv_{2}\bb_{1,1}\ggamma_{1}$ \\
\hline 
335  & $\hh_{1,1}\bb_{1,1}\bbeta'_{3/2}$ \\
\hline 
339  & $\vv_{2}^{2}\hh_{1,1}\hh_{2,0}\ggamma_{1}$ \\
\hline 
343  & $\vv_{2}\bb_{2,0}\ggamma_{1}$ \\
\hline 
344  & $\hh_{1,1}\ggamma_{2}$ \\
\hline 
345  & $\vv_{2}^{4}\ggamma_{1}$ \\
\hline 
347  & $\bb_{1,1}\hh_{2,0}\uu_{0}$ \\
\hline 
348  & $\hh_{2,0}\ggamma_{2}$ \\
\hline 
349  & $\hh_{1,1}\bbeta'_{6/2}$ \\
 & $\vv_{2}\ggamma_{2}$ \\
\hline
353  & $\hh_{1,1}\bbeta'_{6}$ \\
\hline 
\end{tabular}}
&\quad
\mbox{\begin{tabular}{|r|c|}
\hline 
$t-s$ & Element\\ 
\hline 
\hline 
355  & $\hh_{1,1}\bb_{2,0}\uu_{0}$ \\
\hline 
357  & $\hh_{1,1}\uu_{3}$ \\
\hline 
358  & $\bbeta_{7}$ \\
\hline 
359  & $\hh_{2,0}\bb_{2,0}\uu_{0}$ \\
\hline 
361  & $\hh_{2,0}\uu_{3}$ \\
\hline 
382  & $\vv_{2}\bb_{1,1}\hh_{2,0}\ggamma_{1}$ \\
\hline 
383  & $\vv_{2}^{2}\bb_{1,1}\ggamma_{1}$ \\
\hline 
394  & $\vv_{2}\hh_{2,0}\bb_{2,0}\ggamma_{1}$ \\
\hline 
395  & $\vv_{2}^{2}\bb_{2,0}\ggamma_{1}$ \\
 & $\hh_{1,1}\hh_{2,0}\ggamma_{2}$ \\
\hline 
396  & $\vv_{2}^{4}\hh_{2,0}\ggamma_{1}$ \\
 & $\vv_{2}\hh_{1,1}\ggamma_{2}$ \\
\hline 
397  & $\vv_{2}^{5}\ggamma_{1}$ \\
\hline 
400  & $\vv_{2}\hh_{2,0}\ggamma_{2}$ \\
\hline 
401  & $\vv_{2}^{2}\ggamma_{2}$ \\
\hline 
406  & $\hh_{1,1}\hh_{2,0}\bb_{2,0}\uu_{0}$ \\
\hline 
407  & $\hh_{1,1}\bb_{2,0}\uu_{1}$ \\
\hline 
408  & $\hh_{1,1}\hh_{2,0}\uu_{3}$ \\
\hline 
409  & $\hh_{1,1}\uu_{4}$ \\
\hline 
410  & $\bbeta_{8}$ \\
\hline\end{tabular}}
\end{array}
\end{displaymath}

\caption{
The elements of $\Ext_{BP_{*}(BP)}^{s,t}(BP_{*} (T(1)_{(1)}))$ for $p=3$, 
and $t-s\le 426$.  
}\label{fig-2}
\end{figure}

\begin{landscape}

\newcommand{\bx}{\put(-1,-1){\line(1,0){2}}
\put(-1,1){\line(1,0){2}}
\put(-1,-1){\line(0,1){2}}
\put(1,-1){\line(0,1){2}}
}

\newcommand{\timesp}{\line(0,4){4}}
\newcommand{\timesv}{\line(4,0){4}}
\newcommand{\masseyh}{{\color{red}\line(96,50){96}}}
\newcommand{\timesh}{{\color{red}\line(48,50){48}}}
\newcommand{\timeshspecial}{{\color{red}\line(48,50){48}\put(48,50){\line(0,4){4}}}}
\newcommand{\masseyhh}{{\color{blue}\line(104,50){104}}}
\newcommand{\timeshh}{{\color{blue}\line(52,50){52}}}
\newcommand{\timeshhspecial}{{\color{blue}\line(52,50){52}\put(52,50){\line(0,4){4}}}}

\unitlength=1.2pt

\begin{figure}[htbp]

\begin{picture}(450,260)(0,-20)
\put(0,0){\line(1,0){450}}
\put(0,280){\line(1,0){450}}
\put(0,0){\line(0,1){280}}
\put(450,0){\line(0,1){280}}
\put(-30,100){\vector(0,1){100}}
\put(100,-30){\vector(1,0){100}}
\put(-33,90){$s$}
\put(90,-33){$t$}
\put(0,50){\line(1,0){5}}
\put(0,100){\line(1,0){5}}
\put(0,150){\line(1,0){5}}
\put(0,200){\line(1,0){5}}
\put(0,250){\line(1,0){5}}
\put(100,0){\line(0,1){5}}
\put(200,0){\line(0,1){5}}
\put(300,0){\line(0,1){5}}
\put(400,0){\line(0,1){5}}
\put(93,-15){100}
\put(193,-15){200}
\put(293,-15){300}
\put(393,-15){400}
\put(-10,47){2}
\put(-10,97){3}
\put(-10,147){4}
\put(-10,197){5}
\put(-10,247){6}
\put(48,50){\circle*{2}}
\put(45,35){$\bbeta_{1}$}
\put(100,50){\circle*{2}\masseyh}
\put(95,35){$\bbeta_{2}$}
\put(144,50){\circle*{2}\timesh\timesv}
\put(125,45){$\bbeta_{3/3}$}
\put(288,150){\circle*{2}\timesh}
\put(148,50){\circle*{2}\masseyh\timesv}
\put(196,100){\circle*{2}\timesh}
\put(340,200){\circle*{2}\timesh}
\put(152,50){\circle*{2}}
\put(152,46){\circle*{2}\timesp\timeshspecial}
\put(200,100){\circle*{2}\masseyh}
\put(204,50){\circle*{2}}
\put(201,35){$\bbeta_{4}$}
\put(256,50){\circle*{2}\masseyh}
\put(251,35){$\bbeta_{5}$}
\put(300,50){\circle*{2}\timesh\timesv}
\put(281,45){$\bbeta_{6/3}$}
\put(304,50){\circle*{2}\masseyh\timesv}
\put(352,100){\circle*{2}\timesh}
\put(308,50){\circle*{2}}
\put(308,46){\circle*{2}\timesp\timeshspecial}
\put(356,100){\circle*{2}}
\put(360,50){\circle*{2}}
\put(357,35){$\bbeta_{7}$}
\put(412,50){\circle*{2}}
\put(407,35){$\bbeta_{8}$}
\put(156,46){\circle*{2}\timeshspecial\timeshhspecial}
\put(156,36){$\uu_{0}$}
\put(204,100){\circle*{2}\timeshh\masseyh}
\put(208,100){\circle*{2}\timesh\masseyhh}
\put(256,150){\circle*{2}\masseyh\masseyhh}
\put(300,150){\circle*{2}\timesh\timeshh}
\put(312,150){\circle*{2}\timesh\timeshh}
\put(352,200){\circle*{2}\timesh}
\put(360,200){\circle*{2}\timeshh}
\put(364,200){\circle*{2}\timesh}
\put(412,250){\circle*{2}}
\put(256,100){\circle*{2}\timeshh}
\put(255,85){$\hh_{1,1}\uu_{1}$}
\put(308,150){\circle*{2}\masseyhh}
\put(412,200){\circle*{2}}
\put(312,46){\circle*{2}\timeshspecial\timeshhspecial}
\put(312,36){$\uu_{3}$}
\put(360,100){\circle*{2}\timeshh}
\put(364,100){\circle*{2}\timesh}
\put(412,150){\circle*{2}}
\put(412,100){\circle*{2}}
\put(192,100){\bx\timeshh\masseyh}
\put(175,105){$\vv_{2}\ggamma_{1}$}
\put(336,200){\bx\timeshh}
\put(244,150){\bx\masseyh\masseyhh}
\put(348,200){\bx\timeshh}
\put(388,250){\bx}
\put(244,100){\bx\timesh\timeshh}
\put(292,150){\bx\timeshh\masseyh}
\put(296,150){\bx\timesh\masseyhh}
\put(388,200){\bx}
\put(400,200){\bx}
\put(344,200){\bx}
\put(400,250){\bx}
\put(348,100){\bx\timeshh}
\put(400,150){\bx}
\put(400,100){\bx}
\put(300,110){\bx\timesh\timeshh}
\put(288,115){$\ggamma_{2}$}
\put(348,160){\bx\timeshh}
\put(352,160){\bx\timesh}
\put(400,210){\bx}
\put(352,110){\bx\timesh\timeshh}
\put(332,115){$\vv_{2}\ggamma_{2}$}
\put(400,160){\bx}
\put(404,160){\bx}
\put(404,110){\bx}
\put(386,115){$\vv_{2}^{2}\ggamma_{2}$}
\put(30,235){
Here we frequently use the relation
}
\put(50,220){
$\bb_{1,1} x 
= \hh_{1,1} \langle \hh_{1,1}, \ldots, \hh_{1,1}, x \rangle$
}
\put(70,205){
$= \langle \hh_{1,1}, \ldots, \hh_{1,1}, \hh_{1,1} x \rangle$
}
\put(30,190){
and the similar one related to $\hh_{2,0}$ and $\bb_{2,0}$.  
}
\end{picture}

\caption[$\Ext_{\Gamma(2)}(T_{1}^{p-1})$]
{
$\Ext_{BP_{*}(BP)}(BP_{*}(T(1)_{(1)}))$
for $p=3$ in dimensions up to 426 dimension.  
\begin{itemize}
\item
Solid dots indicate $v_{2}$-torsion free elements, and squares indicate elements killed by $v_{2}$.
\item
Short vertical and horizontal lines indicate multiplication by $p$ and $v_{1}$.
\item
Red lines (resp.~blue lines) indicate multiplication by $h_{2,1}$ (resp.~$h_{3,0}$) 
and the Massey product operation \\
$\langle h_{2,1},h_{2,1}, - \rangle$ 
(resp.~ $\langle h_{3,0},h_{3,0},- \rangle$). 
The compositition of the two is the multiplication by $b_{2,1}$ (resp.~$b_{3,0}$).
\end{itemize}
}
\label{fig-1}
\end{figure}

\end{landscape}

\section{The proof of \autoref{thm-7.3.15}}

In this section we give a detailed proof\footnote{The case $m=0$ was treated in \cite[\S 7.4]{Rav:MU2nd}.} of \autoref{thm-7.3.15} for $m>0$.
As is stated in the proof of \autoref{d2-CESS-j=1}, our {\SS} is a quotient of the {\CESS} 
and it is enough to prove each differential by computing in $C_{\Gamma(m+1)}(\Tm{1}\otimes_{}N^{2})$.

\begin{lem}\label{lem-b20}
For $m>0$, we have a cocycle
$\bb'_{2,0} = p^{-1}( v_{1}^{p}\bb_{1,1} + d(\wt_{2}^{p}) )$
in the cobar complex over $\Gamma(m+1)$, 
which projects to $\bb_{2,0}$ in that over $\Gamma (m+2)$. 
\end{lem}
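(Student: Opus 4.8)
The plan is to establish three facts about $\bb'_{2,0}=p^{-1}(v_1^p\bb_{1,1}+d(\wt_2^p))$, viewed as a cochain in the cobar complex over $\Gamma(m+1)$: (a) the parenthesized expression is divisible by $p$, so that $\bb'_{2,0}$ is integral; (b) $\bb'_{2,0}$ is a cocycle; and (c) it maps to $\bb_{2,0}$ under the projection to $\Gamma(m+2)$. Granting (a), parts (b) and (c) are formal. For (b), recall that $\wt_1=t_{m+1}$ is primitive modulo $p$ over $\Gamma(m+1)$, hence so is $\wt_1^p$, so the standard element $\bb_{1,1}=b(\wt_1^{p})$, where $b(x)=-p^{-1}\sum_{0<k<p}\binom{p}{k}x^{k}\otimes x^{p-k}$, is a cocycle; since $\eta_R(v_1)=v_1+pt_1=v_1$ in $\Gamma(m+1)$ for $m\ge1$, the scalar $v_1^p$ is invariant, so $d(v_1^p\bb_{1,1})=v_1^p\,d(\bb_{1,1})=0$ and hence $d(\bb'_{2,0})=p^{-1}(d(v_1^p\bb_{1,1})+d^2(\wt_2^p))=0$. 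For (c), reduce the defining formula modulo $t_{m+1}$: over $\Gamma(m+2)$ we have $\wt_1=0$, so $\bb_{1,1}=0$ and $\bb'_{2,0}$ becomes $p^{-1}d(\wt_2^p)$; as $\wt_2$ is primitive modulo $p$ over $\Gamma(m+2)$, the expansion of $(\wt_2\otimes1+1\otimes\wt_2)^p$ gives $d(\wt_2^p)\equiv -p\,b(\wt_2)\pmod{p^2}$, whence $p^{-1}d(\wt_2^p)$ represents $\bb_{2,0}$ (the overall sign being fixed by the cobar sign convention in force).

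The real content is the divisibility (a), which is exactly the mod-$p$ identity $d(\wt_2^p)\equiv -\,v_1^p\bb_{1,1}\pmod p$ over $\Gamma(m+1)$. I would prove it by determining the reduced coproduct $R=\bar\psi(\wt_2)$ modulo $p$. Since $\psi$ is multiplicative and the Frobenius is additive modulo $p$,
\[
d(\wt_2^{p})=\bar\psi(\wt_2^{p})\equiv R^{p}\pmod p ,
\]
so the identity follows once one shows
\[
\bar\psi(\wt_2)\equiv -\,v_1\,b(\wt_1)\pmod p .
\]
Indeed, then $R^{p}\equiv(-v_1)^p\,b(\wt_1)^{p}=-v_1^p\,b(\wt_1^{p})=-v_1^p\bb_{1,1}$, using $(-1)^p=-1$ for odd $p$ and the Frobenius congruence $b(\wt_1)^{p}\equiv b(\wt_1^{p})\pmod p$ (valid because $(-p^{-1}\binom{p}{k})^{p}\equiv -p^{-1}\binom{p}{k}\pmod p$).

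The main obstacle is thus the single coproduct computation $\bar\psi(\wt_2)\equiv -v_1 b(\wt_1)\pmod p$, and this is precisely where the hypothesis $m>0$ enters. I would expand $\psi(t_{m+2})$ using the $BP_*(BP)$ coproduct and right-unit formulas (\cite[Theorem A2.1.27]{Rav:MU2nd}, together with the relations among $\vv_i$, $\wl_i$, $\ell_i$ recorded in \autoref{prop-vv-and-llambda} and \eqref{recursive-lambda}), then reduce modulo $(p,t_1,\dots,t_m)$. For $m>0$ the Milnor-type term $t_1\otimes t_1^{p}$ — which survives and dominates when $m=0$, forcing the separate treatment of \cite[\S 7.4]{Rav:MU2nd} — is killed, and a degree count (everything must sit in internal degree $|\wt_2|=|v_1|+p|\wt_1|$) shows that the only surviving contribution modulo $p$ comes from the $v_1\wt_1^{p}$-part of $\eta_R(v_{m+2})$; collecting these terms assembles exactly $-v_1\,b(\wt_1)$, the genuine $v_1$ (not $\vv_1$) being forced by the degree. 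The bookkeeping of this coproduct expansion is the one genuinely laborious step; once it is carried out, (a) holds, and with it (b) and (c) as above.
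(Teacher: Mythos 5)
Your three-step plan is, in substance, the paper's own proof: the paper records the cobar representatives $\bb_{1,j}=p^{-1}d(\wt_{1}^{p^{j+1}})$ and the identity $d(\wt_{2}^{p})=\wt_{2}^{p}\otimes 1+1\otimes\wt_{2}^{p}-(\wt_{2}\otimes 1+v_{1}\bb_{1,0}+1\otimes\wt_{2})^{p}$, which is exactly your coproduct statement for $\wt_{2}$. Your reduction of everything to that coproduct formula, the Fermat/Frobenius step $b(\wt_{1})^{p}\equiv b(\wt_{1}^{p})\pmod p$, and the projection argument in (c) all match the paper; moreover your key congruence actually holds integrally, not just mod $p$ (reduce $\sum_{i+j=n}\ell_{i}\Delta(t_{j})^{p^{i}}=\sum_{i+j+k=n}\ell_{i}t_{j}^{p^{i}}\otimes t_{k}^{p^{i+j}}$ modulo $(t_{1},\dots,t_{m})$ for $n=m+2$: since $t_{1},\dots,t_{m}$ die when $m>0$, one gets $\Delta(\wt_{2})=\wt_{2}\otimes 1+1\otimes\wt_{2}+v_{1}\bb_{1,0}$ on the nose, with no Milnor term), which is what makes the paper's one-line proof legitimate.

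There is, however, one concrete step that fails as written: your verification (b) that $\bb'_{2,0}$ is a cocycle. You take $\bb_{1,1}=b(\wt_{1}^{p})$ with $b(x)=-p^{-1}\sum_{0<k<p}\binom{p}{k}x^{k}\otimes x^{p-k}$ and assert it is a cocycle because $\wt_{1}^{p}$ is primitive mod $p$; but primitivity mod $p$ only yields $d(b(\wt_{1}^{p}))\equiv 0\pmod p$, not $d(b(\wt_{1}^{p}))=0$. In fact $d(b(\wt_{1}^{p}))\neq 0$: the coefficient of $\wt_{1}\otimes\wt_{1}^{pk-1}\otimes\wt_{1}^{p(p-k)}$ in $d(b(\wt_{1}^{p}))$ is $\pm k\binom{p}{k}$, which is nonzero (divisible by $p$ exactly once), and nothing can cancel it because every other contribution has a $p$-th power in the first tensor slot. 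Consequently your element $p^{-1}(v_{1}^{p}b(\wt_{1}^{p})+d(\wt_{2}^{p}))$ has nonvanishing differential $p^{-1}v_{1}^{p}\,d(b(\wt_{1}^{p}))$ (integral, but nonzero), so it is not a cocycle. The repair is simply to use the representative the paper has fixed, $\bb_{1,1}=p^{-1}d(\wt_{1}^{p^{2}})=-\sum_{0<\ell<p^{2}}p^{-1}\binom{p^{2}}{\ell}\wt_{1}^{\ell}\otimes\wt_{1}^{p^{2}-\ell}$: since $\wt_{1}$ is exactly primitive in $\Gamma(m+1)$ for $m>0$, this is $p^{-1}$ times an honest coboundary, so $d(\bb_{1,1})=0$ identically and your argument for (b) goes through verbatim, while (a) and (c) are unaffected because the two candidate representatives agree mod $p$ and both map to zero in $\Gamma(m+2)$.
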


\begin{proof}
Recall that we are using the symbols 
$\bb_{1,j}$ and $\bb_{2,0}$ 
for their cobar representatives, namely
\begin{align*}
\bb_{1,j} & = 
p^{-1}d \left(\wt_{1}^{p^{j+1}} \right)
=
- \sum_{0< \ell <p^{j+1}}
p^{-1}
\binom{p^{j+1}}{\ell }
\wt_{1}^{\ell}\otimes_{}\wt_{1}^{p^{j+1}-\ell }  \\
\mbox{and}
\qquad
\bb_{2,0} & \equiv 
p^{-1}\left(\wt_{2}^{p}\otimes_{}1+1\otimes_{}\wt_{2}^{p}
- (\wt_{2}\otimes_{}1+1\otimes_{}\wt_{2})^{p} \right)  \\
& \equiv  
- \sum_{0<\ell <p}p^{-1}\binom{p}{\ell }
\wt_{2}^{\ell }\otimes_{}\wt_{2}^{p-\ell } \qquad \bmod \left(\wt_{1} \right).
\end{align*}
Then the result follows from 
$d(\wt_{2}^{p}) = 
( \wt_{2}^{p} \otimes 1
+ 1 \otimes \wt_{2}^{p}
- (\wt_{2} \otimes 1 
+ v_{1}\bb_{1,0} 
+ 1\otimes_{}\wt_{2})^{p} )$. 
\end{proof}

By Lemma 1.4 and \autoref{lem-b20}, 
it follows that the product of any permanent cycle with $\bb_{2,0}$ is again a permanent cycle.  
This implies that each element in 
\[
\begin{array}{c}
A (m+1)/I_{2} \otimes_{}
E(\hh_{1,1}, \hh_{2,0}) \otimes P(\bb_{1,1}, \bb_{2,0}) \otimes 
\big\{ \uu_{p/k} \mid 2 \le k \le p \big\}  \\
\oplus  \\
A (m+2)/I_{3} \otimes_{}
E (\hh_{1,1}, \hh_{2,0}) \otimes_{}
P(\bb_{1,1}, \bb_{2,0}) \otimes_{} 
\left\{\ggamma_{2},\,\ggamma_{3},\dotsc  \right\}
\end{array}
\]
is a permanent cycle, unlike the case $m=0$.

\begin{lem}\label{eq-dt3}
Let $\overline{\wt}_{3}$ be the conjugation of $\wt_{3}$.  Then we
have
\[
\Delta (\overline{ \wt_{3}}) 
= \overline{ \wt_{3}}\otimes 1 + 1 \otimes \overline{ \wt_{3}}
-v_{1}\bb_{2,0}-v_{2}\bb_{1,1} 
+
\begin{cases}
\wt_{1}^{p^{2}}\otimes_{}\wt_{1}
        &\mbox{for }m=1  \\
0 &\mbox{for }m \ge 2.
\end{cases}
\]
The difference between $\overline{\wt}_{3}$ and $-\wt_{3}$ has trivial
image in $\Gamma (m+2)$.
\end{lem}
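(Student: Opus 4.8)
The plan is to make $\overline{\wt_3}=c(\wt_3)$ completely explicit and thereby reduce the statement to the coproduct of $\wt_3=t_{m+3}$ itself. Working in $\Gamma(m+1)$, where $t_1=\cdots=t_m=0$, the antipode relation $\sum_{i+j=m+3}t_i\,c(t_j)^{p^i}=0$ retains almost no terms. For $m\ge2$ every nonprimitive summand carries a factor $t_1$ or $t_2$, both zero, so $\overline{\wt_3}=-\wt_3$; for $m=1$ the only surviving correction is $t_2\,c(t_2)^{p^2}=\wt_1(-\wt_1)^{p^2}=-\wt_1^{1+p^2}$, whence $\overline{\wt_3}=-\wt_3+\wt_1^{1+p^2}$. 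In both cases $\overline{\wt_3}+\wt_3\in(\wt_1)=(t_{m+1})=\ker(\Gamma(m+1)\to\Gamma(m+2))$, which already establishes the final sentence. It then suffices to compute $\Delta(\wt_3)$ (and, for $m=1$, $\Delta(\wt_1^{1+p^2})$) and combine linearly.

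As input I would first record the two lower coproducts. Specializing the standard $BP_*(BP)$ coproduct formula $\sum_{i+j=n}\ell_i\,\Delta(t_j)^{p^i}=\sum_{i+j+k=n}\ell_i\,t_j^{p^i}\otimes t_k^{p^{i+j}}$ of \cite{Rav:MU2nd} to $n=m+1$ and $n=m+2$, and discarding the vanishing $t_1,\dots,t_m$, the rational $\ell$-terms cancel and one is left with $\Delta(\wt_1)=\wt_1\otimes1+1\otimes\wt_1$ (so $\wt_1$ is primitive) and $\Delta(\wt_2)=\wt_2\otimes1+1\otimes\wt_2+v_1\bb_{1,0}$, where $\bb_{1,0}=p^{-1}d(\wt_1^p)$ and I have used $p\ell_1=v_1$.

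Then for $n=m+3$ the only surviving left-hand terms are $\Delta(\wt_3)$, $\ell_1\Delta(\wt_2)^p$ and $\ell_2\Delta(\wt_1)^{p^2}$, while the right-hand side contributes the primitive part, $\ell_1(\wt_2^p\otimes1+1\otimes\wt_2^p)$, $\ell_2(\wt_1^{p^2}\otimes1+1\otimes\wt_1^{p^2})$, and---only for $m=1$---an extra $\wt_1\otimes\wt_1^{p^2}$. Substituting the lower coproducts and writing $X=\wt_2\otimes1+1\otimes\wt_2$, $Y=v_1\bb_{1,0}$, the $\wt_1^{p^2}$-part collapses to $p\ell_2\,\bb_{1,1}=(v_2+\ell_1v_1^p)\bb_{1,1}$ (using $p\ell_2=v_2+\ell_1v_1^p$), and the $\wt_2$-part to $v_1\bb_{2,0}+\ell_1(X^p-(X+Y)^p)$ (using $\wt_2^p\otimes1+1\otimes\wt_2^p=X^p+p\bb_{2,0}$). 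The crucial step is to recognize, straight from the cocycle $\bb'_{2,0}=p^{-1}(v_1^p\bb_{1,1}+d(\wt_2^p))$ of \autoref{lem-b20}, that
\[
v_1\bb_{2,0}+\ell_1v_1^p\,\bb_{1,1}+\ell_1\bigl(X^p-(X+Y)^p\bigr)=v_1\bb'_{2,0},
\]
so that the stray $\ell_1v_1^p\bb_{1,1}$ is exactly the discrepancy absorbed by replacing $\bb_{2,0}$ with its genuine $\Gamma(m+1)$-representative $\bb'_{2,0}$. This gives $\overline{\Delta}(\wt_3)=v_1\bb_{2,0}+v_2\bb_{1,1}$ for $m\ge2$ and an additional $\wt_1\otimes\wt_1^{p^2}$ for $m=1$.

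Finally, for $m\ge2$ we conclude $\Delta(\overline{\wt_3})=-\Delta(\wt_3)$, which is the asserted formula. For $m=1$ I would add $\Delta(\wt_1^{1+p^2})$, whose reduced part (computed from the primitivity of $\wt_1$) is $\wt_1\otimes\wt_1^{p^2}+\wt_1^{p^2}\otimes\wt_1$ modulo a $p$-divisible coboundary multiple of $d(\wt_1^{p^2})$; the $\wt_1\otimes\wt_1^{p^2}$ cancels against the one produced above, leaving precisely the stated $\wt_1^{p^2}\otimes\wt_1$. I expect the main obstacle to be exactly this middle bookkeeping: carrying the $\ell_1,\ell_2$ corrections through the $p$-th powers and checking that they reassemble into $v_1\bb'_{2,0}$ without residual $p$-divisible error, together with the $m=1$ coboundary accounting. \autoref{lem-b20} is what makes the collapse clean and is the reason $\bb'_{2,0}$, not the naive $\bb_{2,0}$, is the correct representative.
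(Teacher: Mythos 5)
Your route is the same as the paper's: the paper's entire proof consists of asserting the conjugation formula ($\overline{\wt}_{3}=-\wt_{3}$ for $m\ge 2$, $\overline{\wt}_{3}=-\wt_{3}+\wt_{1}^{1+p^{2}}$ for $m=1$) and the coproduct formula for $\Delta(\wt_{3})$, and then combining them, and you supply the derivations it omits. Your coproduct half is correct, and your identification of the exact $v_{1}$-coefficient as the cocycle $\bb'_{2,0}$ of \autoref{lem-b20} (the identity $v_{1}\bb_{2,0}+\ell_{1}v_{1}^{p}\bb_{1,1}+\ell_{1}(X^{p}-(X+Y)^{p})=v_{1}\bb'_{2,0}$ checks out) is in fact a sharpening: it is the only sensible reading of the symbol $\bb_{2,0}$ in the statement, since the naive representative makes the formula false on the nose.

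There is, however, one step that as written rests on a false identity. What you call ``the antipode relation,'' $\sum_{i+j=n}t_{i}\,c(t_{j})^{p^{i}}=0$, is not an identity in $BP_{*}(BP)$, nor in $\Gamma(m+1)$ in general; the correct integral relation carries $\ell$-corrections,
\[
\sum_{i+j+k=n}\ell_{i}\,t_{j}^{p^{i}}\,c(t_{k})^{p^{i+j}}=\ell_{n}
\]
(cf.\ \cite[A2.1.27]{Rav:MU2nd}). Already for $n=3$ in $BP_{*}(BP)$ one finds $c(t_{3})+t_{1}c(t_{2})^{p}+t_{2}c(t_{1})^{p^{2}}+t_{3}=-\ell_{1}\bigl(c(t_{2})^{p}+t_{1}^{p}c(t_{1})^{p^{2}}+t_{2}^{p}\bigr)$, and since $c(t_{2})=t_{1}^{1+p}-t_{2}$ the bracket is $p$-divisible but nonzero, so this correction is a nonzero multiple of $v_{1}$ and your relation fails. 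In the case at hand the conclusion survives: specializing the correct relation to $\Gamma(m+1)$ in degree $n=m+3$, the only corrections are $\ell_{1}\bigl(c(\wt_{2})^{p}+\wt_{2}^{p}\bigr)+\ell_{2}\bigl(c(\wt_{1})^{p^{2}}+\wt_{1}^{p^{2}}\bigr)$, and these vanish because the same relation in degrees $m+1$ and $m+2$ gives $c(\wt_{1})=-\wt_{1}$ and $c(\wt_{2})=-\wt_{2}$ exactly and $p$ is odd; this verification must be part of the proof, since without it the computation of $\overline{\wt}_{3}$ is unjustified (and in higher degrees, where the lower conjugations do acquire correction terms, your shortcut would give wrong answers). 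A smaller point: for $m=1$ your expansion of $\Delta(\wt_{1}^{1+p^{2}})$ produces the extra term $-p(\wt_{1}\otimes 1+1\otimes\wt_{1})\bb_{1,1}=-(\wt_{1}\otimes 1+1\otimes\wt_{1})\,d(\wt_{1}^{p^{2}})$, which you flag and then drop; the displayed formula of the lemma is exact only modulo this coboundary multiple (the paper drops it silently as well), which is harmless in every application since the relevant comodule elements are killed by $p$, but it should be stated as an explicit caveat rather than absorbed into ``precisely the stated formula.''
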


\begin{proof}
By definition, 
$\overline{\wt}_{3} = - \wt_{3}+\wt_{1}^{\, 1+p^{2}}$
for $m=1$ 
and 
$\overline{\wt}_{3} = - \wt_{3}$
for $m \ge 2$.  
Since 
\[
\Delta (\wt_{3}) = \wt_{3}\otimes_{}1+1\otimes_{}\wt_{3}
+ v_{1}\bb_{2,0}+v_{2}\bb_{1,1}
+
\begin{cases}
\wt_{1}\otimes_{}\wt_{1}^{p^{2}} 
  &\mbox{for }m=1  \\
0 &\mbox{for }m \ge 2.
\end{cases}
\]
we have the result.  
\end{proof}

\begin{proof}[Proof of \autoref{thm-7.3.15} \eqref{thm-7.3.15-1}]
We may use $\Frac{\vv_{2}^{i}\vv_{3}}{pv_{1}}$ instead of $\uu_{i}$
because these have the same $\delta^{1}\delta^{0}$-image \eqref{connecting-composition} into $U_{m+1}^{2}$.  
For $i>0$, we have
\begin{align*}
d\left(\wt_{2} \otimes_{} 1 \otimes_{}
\frac{\vv_{2}^{i}\vv_{3}}{pv_{1}} \right)
& = 
\wt_{2}\otimes_{} (v_{2}\wt_{1}^{p^{2}}-v_{2}^{p^{m+1}}\wt_{1})
\otimes_{} 1 \otimes_{} 
\Frac{\vv_{2}^{i}}{pv_{1}}, \\
d\left(\wt_{2}\otimes_{} \wt_{1} \otimes_{}
\Frac{v_{2}^{p^{m+1}} \vv_{2}^{i}}{pv_{1}} \right)
& = 
\wt_{2}\otimes_{}\wt_{1} \otimes_{} 1 \otimes_{} 
\Frac{v_{2}^{p^{m+1}} \vv_{2}^{i}}{pv_{1}},  \\
d\left(
\wt_{2}\wt_{1}^{p^{2}} \otimes_{} 1 \otimes_{} \Frac{v_{2}\vv_{2}^{i}}{pv_{1}} 
\right)
& = 
- \left(\wt_{2}\otimes_{}\wt_{1}^{p^{2}}
+ \wt_{1}^{p^{2}}\otimes_{} \wt_{2} \right) \otimes_{} 1 \otimes_{} 
\Frac{v_{2} \vv_{2}^{i}}{pv_{1}},  \\
d\left(\wt_{1}^{p^{2}}\otimes_{} 1 \otimes_{}
\Frac{v_{2}\vv_{2}^{i+1}}{(i+1) p^{2}v_{1}}  \right)
& =
\wt_{1}^{p^{2}}\otimes_{} \wt_{2}\otimes_{} 1 \otimes_{}
\Frac{v_{2}\vv_{2}^{i}}{pv_{1}}   
+\bb_{1,1} \otimes_{} 1\otimes_{} \Frac{v_{2}\vv_{2}^{i+1}}{(i+1) pv_{1}}. 
\end{align*}

\noindent
The sum of the preimages on the left represents $\hh_{2,0}\uu_{i}$; 
summing on the right gives the result. 
\end{proof}

\begin{proof}[Proof of \autoref{thm-7.3.15} \eqref{thm-7.3.15-2}]
We give the proof for $k=1$ and $\varepsilon=1$.  
The general case follows by replacing $\bb_{2,0}$ by $\bb'_{2,0}$ 
(\autoref{lem-b20}) 
and tensoring all equations on the left with the cocycle $(\bb'_{2,0})^{k-1}$.

We have 
$\eta_{R} (\vv_{2}) \equiv \vv_{2}+z$ mod $I^{p^{m+1}}$, 
where 
$I= (p,v_{1}, \ldots)$ and $z=v_{1}\wt_{1}^{p}+p\wt_{2}$.
By this and \autoref{eq-dt3} we have
\begin{align*}
d (\bb_{2,0} \otimes_{} 1\otimes_{}\uu_{i} )
& = \bb_{2,0} \otimes_{} d\left(1\otimes_{}\uu_{i} \right)\\
& = - \bb_{2,0} \otimes_{} v_{2}\sum_{0<k<p}
\binom{i+p}{k}z^{k} \otimes_{} 1 \otimes_{} \Frac{\vv_{2}^{i+p-k}}{\binom{i+p}{p} pv_{1}^{p+1}}    \\
& = - \bb_{2,0} \otimes_{} v_{2}
\left( (i+p) \wt_{1}^{p} \otimes_{} 1 \otimes_{} \Frac{\vv_{2}^{i+p-1}}{\binom{i+p}{p} pv_{1}^{p}}
+ \dotsb
\right), \\
\lefteqn{
d\left(-\wt_{3} \otimes v_{2}
\left(
-(i+p) \wt_{1}^{p} \otimes_{} 1 \otimes_{} \Frac{\vv_{2}^{i+p-1}}{\binom{i+p}{p} pv_{1}^{p+1}}
+ \dotsb
\right)
\right)
} \hspace{68pt}\\
& = 
- (v_{1}\bb_{2,0}+v_{2}\bb_{1,1}) \otimes v_{2} 
\left(-(i+p)\wt_{1}^{p}
\otimes_{}1\otimes_{} \Frac{\vv_{2}^{i+p-1}}{\binom{i+p}{p} pv_{1}^{p+1}}+\dotsb    
\right)\\
& \qquad -\wt_{3}\otimes -v_{2}(i+p)\wt_{1}^{p}
\otimes_{} \binom{i+p-1}{p} \wt_{1}^{p^{2}}
\otimes_{}1\otimes_{} \Frac{\vv_{2}^{i-1}}{\binom{i+p}{p} pv_{1}} \\
& = 
-\bb_{2,0} \otimes_{} 
v_{2}\left(-(i+p)\wt_{1}^{p}
\otimes_{}1\otimes_{} \Frac{\vv_{2}^{i+p-1}}{\binom{i+p}{p} pv_{1}^{p}}+\dotsb\right)\\
& 
\qquad 
-v_{2}\bb_{1,1} \otimes_{} v_{2}
\left(-(i+p)\wt_{1}^{p}
\otimes_{}1\otimes_{} \Frac{\vv_{2}^{i+p-1}}{\binom{i+p}{p} pv_{1}^{p+1}}+\dotsb    
\right)\\
& 
\qquad 
+iv_{2}\wt_{3}\otimes_{} \wt_{1}^{p}\otimes_{} \wt_{1}^{p^{2}}
\otimes_{}1\otimes_{} \Frac{\vv_{2}^{i-1}}{pv_{1}},
\end{align*}
and 
\begin{align*}
\lefteqn{d\left(-i\wt_{3}\otimes_{}\wt_{1}^{p}\otimes_{}1\otimes_{}
\Frac{\vv_{2}^{i-1}\vv_{3}}{pv_{1}}    \right)}\qquad\qquad\\ 
& = 
-iv_{2}\bb_{1,1}\otimes_{}\wt_{1}^{p}\otimes_{}1\otimes_{}
\Frac{\vv_{2}^{i-1}\vv_{3}}{pv_{1}}
-i\wt_{3}\otimes_{}\wt_{1}^{p}\otimes_{}v_{2}\wt_{1}^{p^{2}}\otimes_{}
1\otimes_{}\Frac{\vv_{2}^{i-1}}{pv_{1}}. 
\end{align*}
The sum of the preimages on the left represents $\bb_{2,0}\uu_{i}$, 
and the terms on the right add up to
\begin{align*}
\lefteqn{\bb_{1,1}\otimes_{}\wt_{1}^{p}\otimes_{}1\otimes_{}
\left(
-\Frac{iv_{2}\vv_{2}^{i-1}\vv_{3}}{pv_{1}}
 +\Frac{(i+p)v_{2}^{2}\vv_{2}^{i+p-1}}{\binom{i+p}{p} pv_{1}^{p+1}}
\right) 
 +\dotsb    
}\qquad\qquad \\
& = 
-iv_{2}\bb_{1,1}\otimes_{}\wt_{1}^{p}\otimes_{}1\otimes_{}
\left(\Frac{\vv_{2}^{i-1}\vv_{3}}{pv_{1}}
- \Frac{\vv_{2}^{i+p-1}}{\binom{i+p-1}{p} pv_{1}^{p+1}}
\right)
+\dotsb 
\end{align*}

\noindent
The inspection of $\tilde{E}_{2}$-terms described in \autoref{cor-ExtU} 
shows that the element represents 
$-iv_{2}\hh_{1,1}\bb_{1,1}\uu_{i-1}$ as claimed.  
\end{proof}

To derive \eqref{thm-7.3.15-3}, \eqref{thm-7.3.15-4} and \eqref{thm-7.3.15-5} from \eqref{thm-7.3.15-1} and \eqref{thm-7.3.15-2}, 
we use Massey product arguments.  
Oberve \autoref{fig-moreexamples} for $p=5$, 
in which each diagonal is similar to \eqref{sequence-for-Massey}
and the arrows labeled $\tilde{d}_{r}$ are related to Cartan-Eilenberg differentials given in \autoref{d2-CESS-j=1} and \eqref{thm-7.3.15-2}; 
for example, the differential 
$\tilde{d}_{3}(\bb_{2,0}\uu_{4}) = v_{2}\hh_{1,1}\bb_{1,1}\uu_{3}$
is denoted 
\[
\bb_{2,0}\uu_{4} \stackrel{\tilde{d}_{3}}{\longrightarrow} v_{2}\bb_{1,1}\uu_{3}.
\]

\begin{figure}
\[
\xymatrix@=10pt{
\bbeta_{5} & \bbeta_{6} & \bbeta_{7} & \bbeta_{8} & \bbeta_{9} & \bbeta'_{10}  \\
v_{2}\bbeta_{5/2} & \bbeta_{6/2} \ar[ul]_{\wrr_{3}} & \bbeta_{7/2} \ar[ul]_{\wrr_{3}} & \bbeta_{8/2} \ar[ul]_{\wrr_{3}} & \bbeta_{9/2} \ar[ul]_{\wrr_{3}} & \bbeta'_{10/2} \ar[ul]_{\wrr_{3}} \\
v_{2}^{2}\bb_{1,1}\bbeta_{5/3} & v_{2}\bbeta_{6/3} \ar[ul]_{\wrr_{3}} & \bbeta_{7/3} \ar[ul]_{\wrr_{3}} & \bbeta_{8/3} \ar[ul]_{\wrr_{3}} & \bbeta_{9/3} \ar[ul]_{\wrr_{3}} & \bbeta'_{10/3} \ar[ul]_{\wrr_{3}} \\
v_{2}^{3}\bb_{1,1}^{2}\bbeta_{5/4} & v_{2}^{2}\bb_{1,1}\bbeta_{6/4} \ar[ul]_{\wrr_{3}} & v_{2}\bbeta_{7/4} \ar[ul]_{\wrr_{3}} & \bbeta_{8/4} \ar[ul]_{\wrr_{3}} & \bbeta_{9/4} \ar[ul]_{\wrr_{3}} & \bbeta'_{10/4} \ar[ul]_{\wrr_{3}}  \\
v_{2}^{4}\bb_{1,1}^{3}\bbeta_{5/5} & v_{2}^{3}\bb_{1,1}^{2}\bbeta_{6/5} \ar[ul]_{\wrr_{3}} & v_{2}^{2}\bb_{1,1}\bbeta_{7/5} \ar[ul]_{\wrr_{3}} & v_{2}\bbeta_{8/5} \ar[ul]_{\wrr_{3}} & \bbeta_{9/5} \ar[ul]_{\wrr_{3}} & \bbeta'_{10/5} \ar[ul]_{\wrr_{3}} \\
v_{2}^{4}\bb_{1,1}^{4}\uu_{0} & v_{2}^{3}\bb_{1,1}^{3}\uu_{1} \ar[ul]_{\tilde{d}_{2}} & v_{2}^{2}\bb_{1,1}^{2}\uu_{2} \ar[ul]_{\tilde{d}_{2}} & v_{2}\bb_{1,1}\uu_{3} \ar[ul]_{\tilde{d}_{2}} & \uu_{4} \ar[ul]_{\tilde{d}_{2}} \\
 & v_{2}^{3}\bb_{1,1}^{3}\bb_{2,0}\uu_{1} \ar[ul]_{\tilde{d}_{3}} & v_{2}^{2}\bb_{1,1}^{2}\bb_{2,0}\uu_{2} \ar[ul]_{\tilde{d}_{3}} & v_{2}\bb_{1,1}\bb_{2,0}\uu_{3} \ar[ul]_{\tilde{d}_{3}} & \bb_{2,0}\uu_{4} \ar[ul]_{\tilde{d}_{3}} \\
 & & v_{2}^{2}\bb_{1,1}^{2}\bb_{2,0}^{2}\uu_{2} \ar[ul]_{\tilde{d}_{3}} & v_{2}\bb_{1,1}\bb_{2,0}^{2}\uu_{3} \ar[ul]_{\tilde{d}_{3}} & \bb_{2,0}^{2}\uu_{4} \ar[ul]_{\tilde{d}_{3}} \\
& & & v_{2}\bb_{1,1}\bb_{2,0}^{3}\uu_{3} \ar[ul]_{\tilde{d}_{3}} & \bb_{2,0}^{3}\uu_{4} \ar[ul]_{\tilde{d}_{3}} \\
& & & & \bb_{2,0}^{4}\uu_{4} \ar[ul]_{\tilde{d}_{3}} 
}
\]
\caption[]{Diffetentials for the case $p=5$.}
\label{fig-moreexamples}
\end{figure}
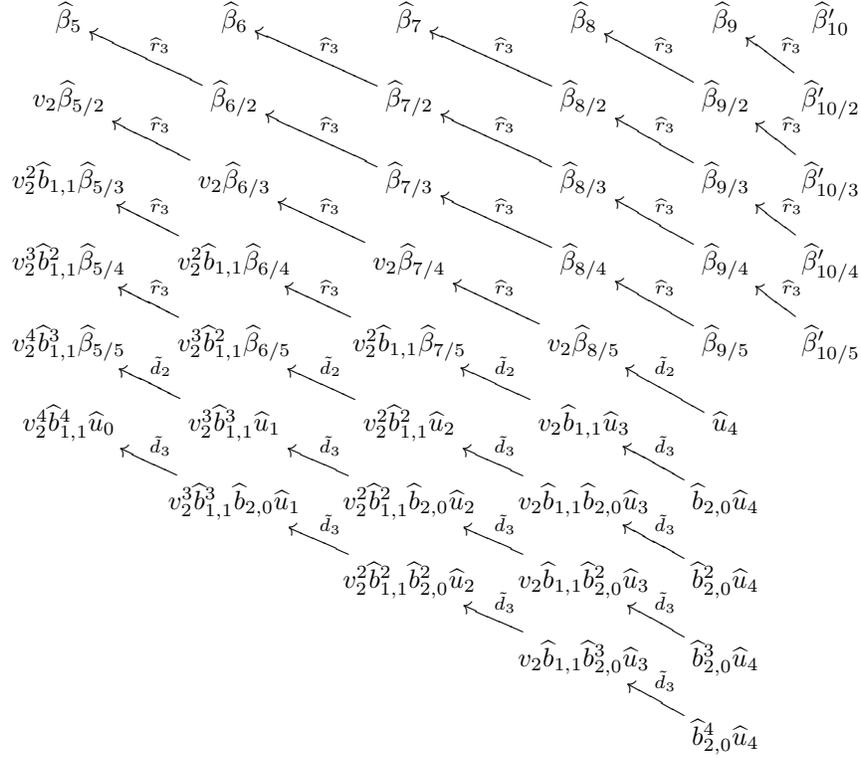

\begin{proof}[Proof of \autoref{thm-7.3.15} \eqref{thm-7.3.15-3}]
For $k=0$ this is a direct consequence of \eqref{thm-7.3.15-1} via multiplication by $\hh_{1,1}$.
We will illustrate with the case $i=p-1$ and $k \le 2$, 
and the other cases are similarly shown.  
For $k=1$, we have the sequence analogous to that of \autoref{quillen-massey-rel}: 
\[
\bb_{2,0}\uu_{p-1} \stackrel{\tilde{d}_{3}}{\longrightarrow}
v_{2}\bb_{1,1}\uu_{p-2} \stackrel{\tilde{d}_{2}}{\longrightarrow}
v_{2}^{2}\bb_{1,1}\bbeta_{2p-3/p} \stackrel{\wrr_{p}}{\longrightarrow}
\cdots \stackrel{\wrr_{p}}{\longrightarrow}
v_{2}^{2}\bb_{1,1}\bbeta_{p/3}.
\]
This allows us to identify 
$v_{2}\hh_{1,1}\bb_{1,1}\uu_{p-2}$ 
with the Massey product 
$\mu_{p-1}(v_{2}^{2}\bb_{1,1}\bbeta_{p/3})$
up to unit scalar multiplication.
It follows that the differential on 
$\hh_{2,0}\hh_{1,1}(\bb_{2,0}\uu_{p-1})$
is the value of 
$\hh_{2,0}\hh_{1,1} \mu_{p-1}(v_{2}^{2}\bb_{1,1}\bbeta_{p/3})$.
Now $\hh_{2,0}\hh_{1,1}$ (resp.~$\bb_{1,1}$) is the image of $\bbeta_{2}$ (resp.~$\bbeta_{p/p}$) 
under a suitable reduction map, so we have 
\begin{equation*}
\begin{split}
\tilde{d}_{5}(\hh_{1,1}\hh_{2,0}\bb_{2,0}\uu_{p-1})
 & = \hh_{2,0}\hh_{1,1}\mu_{p-1}(v_{2}^{2}\bb_{1,1}\bbeta_{p/3})
   = v_{2}^{2}\bb_{1,1}\bbeta_{2} \mu_{p-1}(\bbeta_{p/3})  \\
 & = v_{2}^{2}\bb_{1,1} \mu_{p-1}(\bbeta_{2}) \bbeta_{p/3}
\qquad
\mbox{by \autoref{lem-definingmu} }  \\
 & = v_{2}^{2}\bb_{1,1} \mu_{p-1}(\bbeta_{2}) v_{1}^{p-3}\bbeta_{p/p} 
   = v_{2}^{2} \bb_{1,1}^{2} v_{1}^{p-3}\mu_{p-1}(\bbeta_{2})  \\
 & = v_{2}^{2} \bb_{1,1}^{2} \mu_{2}(\bbeta_{p-1})
\qquad
\mbox{by \autoref{prop-2massey} } \\
 & = v_{2}^{2} \bb_{1,1}^{2} \hh_{1,1}\bbeta'_{p/2}
\end{split}
\end{equation*}
as claimed. 
For $k=2$, 
we have the sequence\footnote{Note that we may assume that $p \ge 5$ since $0 \le k <p-1$.}
\[
\bb_{2,0}^{2}\uu_{p-1} \stackrel{\tilde{d}_{3}}{\longrightarrow}
v_{2}\bb_{1,1}\bb_{2,0}\uu_{p-2} \stackrel{\tilde{d}_{3}}{\longrightarrow}
v_{2}^{2}\bb_{1,1}^{2}\uu_{p-3} \stackrel{\tilde{d}_{2}}{\longrightarrow}
v_{2}^{3}\bb_{1,1}^{2}\bbeta_{2p-4/p} \stackrel{\wrr_{p}}{\longrightarrow}
\cdots \stackrel{\wrr_{p}}{\longrightarrow}
v_{2}^{3}\bb_{1,1}^{2}\bbeta_{p/4}
\]
By the similar argument to the case $k=1$, we have 
\begin{equation*}
\begin{split}
\tilde{d}_{7}(\hh_{1,1}\hh_{2,0}\bb_{2,0}^{2}\uu_{p-1}) 
 & = \hh_{2,0}\hh_{1,1}\mu_{p-1}(v_{2}^{3}\bb_{1,1}^{2}\bbeta_{p/4})
= v_{2}^{3}\bb_{1,1}^{2}\bbeta_{2}\mu_{p-1}(\bbeta_{p/4})  \\
 & = v_{2}^{3}\bb_{1,1}^{2}\mu'_{p-1}(\bbeta_{2})\bbeta_{p/4}
 \qquad
\mbox{by \autoref{lem-definingmu} }  \\
 & = v_{2}^{3}\bb_{1,1}^{2}\mu'_{p-1}(\bbeta_{2})v_{1}^{p-4}\bbeta_{p/p}
 = v_{2}^{3}\bb_{1,1}^{3} v_{1}^{p-4}\mu'_{p-1}(\bbeta_{2}) \\
 & = v_{2}^{3}\bb_{1,1}^{3}\mu'_{3} (\bbeta_{p-2}) 
\qquad
\mbox{by \autoref{prop-2massey} }  \\
& = v_{2}^{3}\bb_{1,1}^{3}\hh_{1,1}\bbeta'_{p/3}
\end{split}
\end{equation*}
as claimed. 
\end{proof}

\begin{proof}[Proof of \autoref{thm-7.3.15} \eqref{thm-7.3.15-4} and \eqref{thm-7.3.15-5}]
We have the sequence 
\[
\bb_{2,0}^{k}\uu_{p-1} \stackrel{\tilde{d}_{3}}{\longrightarrow}
\cdots \stackrel{\tilde{d}_{3}}{\longrightarrow}
v_{2}^{k}\bb_{1,1}^{k}\uu_{p-1-k} \stackrel{\tilde{d}_{2}}{\longrightarrow}
v_{2}^{k+1}\bb_{1,1}^{k}\bbeta_{2p-2-k/p} \stackrel{\wrr_{p}}{\longrightarrow}
\cdots \stackrel{\wrr_{p}}{\longrightarrow}
v_{2}^{k+1}\bb_{1,1}^{k}\bbeta_{p/k+2}
\]
for $1 \le k<p-1$, and 
\[
\bb_{2,0}^{p-1}\uu_{p-1} \stackrel{\tilde{d}_{3}}{\longrightarrow}
\cdots \stackrel{\tilde{d}_{3}}{\longrightarrow}
v_{2}^{p-1}\bb_{1,1}^{p-1}\uu_{0}
\]
for $k=p-1$. 
Thus we have 
\[
\tilde{d}_{r} (\bb_{2,0}^{k}\uu_{p-1}) = 
\begin{cases}
\mu_{p-1} (v_{2}^{k+1}\bb_{1,1}^{k}\bbeta_{p/k+2}) 
& \mbox{for }k<p-1  \\
\mu_{p-1} (v_{2}^{p-1}\bb_{1,1}^{p-1}\uu_{0}) 
& \mbox{for }k=p-1
\end{cases}
\]
up to unit scalar multiplication. 
Since $\hh_{1,1}\mu_{p-1} (x)=\bb_{1,1}x$ we have 
\[
\tilde{d}_{r}(\hh_{1,1}\bb_{2,0}^{k}\uu_{p-1}) = 
\begin{cases}
v_{2}^{k+1}\bb_{1,1}^{k+1}\bbeta_{p/k+2} 
& \mbox{for }k<p-1  \\
v_{2}^{p-1}\bb_{1,1}^{p}\uu_{0} 
& \mbox{for }k=p-1
\end{cases}
\]
as claimed.
\end{proof}

\appendix

\section{Massey products}\label{app-massey}

Here we recall the definition and properties of Massey products very briefly
(cf.~\cite[A1.4]{Rav:MU2nd}) and prove some results used in this paper.
Let $C$ be a {\DGA}, which makes  $H^{*} (C)$ a graded algebra.  
For $x \in C$ or $x \in H^{*}(C)$, let $\overline{x}=(-1)^{1+\deg(x)}x$,
where $\deg(x)$ denotes the total degree:  
the sum of its internal and cohomogical degrees of $x$.  
Then we have $d (\overline{x})=-\overline{d (x)}$,
$\overline{(xy)}=-\overline{x}\,\overline{y}$, and $d (xy)=d
(x)y-\overline{x}d (y) $.

Let $\alpha_{k} \in H^{*} (C)$ $(k=1,2,\dotsc)$ be a finite collection
of elements and with representative cocycles $a_{k-1,k} \in C$.  When
$\overline{\alpha_{1}}\alpha_{2}=0$ and
$\overline{\alpha_{2}}\alpha_{3}=0$, there are cochains $a_{0,2}$ and
$a_{1,3}$ such that $d(a_{0,2}) = \overline{a_{0,1}}a_{1,2}$ and
$d(a_{1,3}) = \overline{a_{1,2}}a_{2,3}$, and we have a cocycle
$b_{0,3} = \overline{a_{0,2}}a_{2,3}+\overline{a_{0,1}}a_{1,3}$.  
The corresponding class in $H^{*}(C)$ represents the Massey product
$\langle \alpha_{1}, \alpha_{2}, \alpha_{3} \rangle$, 
which is the coset comprising all cohomology classes 
represented by such $b_{0,3}$
for all possible choices of $a_{i,j}$.  Two choices of $a_{0,2}$ or
$a_{1,3}$ differ by a cocycle.  The {\bf indeterminacy} of $\langle
\alpha_{1}, \alpha_{2}, \alpha_{3} \rangle$ is the set
\[
\alpha_{1}H^{|\alpha_{2}\alpha_{3}|} (C)
        +H^{|\alpha_{1}\alpha_{2}|} (C)\alpha_{3}.
\]
If the triple product contains zero, then one
such choice yields a $b_{0,3}$ which is the coboundary of a cochain
$a_{0,3}$.

If we have two 3-fold Massey products 
$\langle \alpha_{1}, \alpha_{2}, \alpha_{3} \rangle$ 
and 
$\langle \alpha_{2}, \alpha_{3}, \alpha_{4} \rangle$ 
containing zero, then the $a_{i-1,i}$ and
$a_{i-2,i}$ can be chosen so that there are cochains $a_{0,3}$ and
$a_{1,4}$ with $d(a_{0,3}) = b_{0,3}$ and $d(a_{1,4}) = b_{1,4}$, and
the 4-fold Massey product 
$\langle \alpha_{1}, \alpha_{2}, \alpha_{3}, \alpha_{4} \rangle$ 
represented by the cocycle
$b_{0,4} = \overline{a_{0,3}}a_{3,4} + \overline{a_{0,2}}a_{2,4} +
\overline{a_{0,1}}a_{1,4}$.  More generally, 
if we have cocycles $b_{j,k}$ and cochains $a_{j,k}$ satisfying
\setcounter{equation}{\value{thm}}
\begin{equation}\label{elements-masseyprod}
b_{j,k} = \sum_{j<\ell<k} \overline{a_{j,\ell}}a_{\ell,k} 
\quad
\mbox{for $i \le j < k \le i+n$}
\end{equation}
\setcounter{thm}{\value{equation}}%
and $d(a_{j,k}) = b_{j,k}$ for $0<k-j<n$, then we have the
$n$-fold Massey products $\langle \alpha_{i+1}, \ldots, \alpha_{i+n}
\rangle$ represented by $b_{i,i+n}$.  The cochains $a_{j,k}$ chosen
above are called the {\bf defining system} for the Massey product. 

\bigskip

If two products 
$\langle \alpha_{1}, \dotsc, \alpha_{n-1} \rangle$
and 
$\langle \alpha_{2}, \dotsc, \alpha_{n} \rangle$ 
are strictly defined 
(meaning all the lower order products in sight have trivial indeterminacy), 
then we have 
\[
\alpha_{1}\langle \alpha_{2}, \dotsc, \alpha_{n} \rangle =
\langle \overline{\alpha}_{1}, \dotsc, \overline{\alpha_{n-1}} \rangle \alpha_{n}. 
\]
In fact, we can relax the hypothesis of strict definition 
in the following way.

\begin{lem}\label{lem-juggling}
Suppose that 
$\langle \alpha_{1}, \dotsc, \alpha_{n-1} \rangle$
and 
$\langle \alpha_{2}, \dotsc, \alpha_{n} \rangle$
are defined and have representatives $x$ and $y$
respectively with the common defining system $a_{i,j}$ $(0<i<j<n)$.  
Then, the cocycle $\overline{x} a_{n-1,n}$ is cohomologous to $a_{0,1}y$.
\end{lem}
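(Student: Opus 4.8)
The plan is to prove the identity at the cochain level by producing an explicit cochain whose coboundary is $\overline{x}\,a_{n-1,n}\pm a_{0,1}\,y$; this is the classical ``juggling'' computation, done with the sign conventions fixed above. First I would write the two representatives in terms of the common defining system: by \eqref{elements-masseyprod} we have $x=b_{0,n-1}=\sum_{0<\ell<n-1}\overline{a_{0,\ell}}\,a_{\ell,n-1}$ and $y=b_{1,n}=\sum_{1<\ell<n}\overline{a_{1,\ell}}\,a_{\ell,n}$. Here the interior cochains $a_{i,j}$ $(0<i<j<n)$ are common to both defining systems and satisfy $d(a_{i,j})=b_{i,j}$, whereas the outer cochains $a_{0,\ell}$ $(0<\ell<n-1)$ and $a_{\ell,n}$ $(1<\ell<n)$ belong to the two systems separately and satisfy $d(a_{0,\ell})=b_{0,\ell}$, $d(a_{\ell,n})=b_{\ell,n}$. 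Using $\overline{(uv)}=-\overline{u}\,\overline{v}$ and $\overline{\overline{u}}=u$ I would record the reduction $\overline{x}=-\sum_{0<\ell<n-1}a_{0,\ell}\,\overline{a_{\ell,n-1}}$, which is the shape in which $\overline{x}\,a_{n-1,n}$ will reappear.

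The key step is to introduce the interior cochain
\[
S=\sum_{1<\ell<n-1}\overline{a_{0,\ell}}\,a_{\ell,n},
\]
whose index range is exactly the set on which both $a_{0,\ell}$ and $a_{\ell,n}$ exist, so that the ``missing'' cochains $a_{0,n-1}$ and $a_{1,n}$ (whose roles are played by the cocycles $x$ and $y$) never intervene. Applying $d(uv)=d(u)v-\overline{u}\,d(v)$ together with $d(\overline{u})=-\overline{d(u)}$ and the defining relations gives termwise $d(\overline{a_{0,\ell}}\,a_{\ell,n})=-\overline{b_{0,\ell}}\,a_{\ell,n}-a_{0,\ell}\,b_{\ell,n}$, and expanding $b_{0,\ell}$ and $b_{\ell,n}$ turns $d(S)$ into a double sum of triple products $a_{0,p}\,\overline{a_{p,q}}\,a_{q,n}$.

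The main work, and the main place to be careful, is the telescoping of this double sum. I expect every triple product with $2\le p<q\le n-2$ to occur twice with opposite signs and cancel, leaving only two boundary families: the $p=1$ terms, which assemble into $a_{0,1}\bigl(\sum_{1<q<n-1}\overline{a_{1,q}}\,a_{q,n}\bigr)=a_{0,1}\bigl(y-\overline{a_{1,n-1}}\,a_{n-1,n}\bigr)$, and the $q=n-1$ terms, which assemble into $-\bigl(\sum_{1<p<n-1}a_{0,p}\,\overline{a_{p,n-1}}\bigr)a_{n-1,n}=\bigl(\overline{x}+a_{0,1}\overline{a_{1,n-1}}\bigr)a_{n-1,n}$, the last equality coming from the reduction of $\overline{x}$ above. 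The two stray terms $\mp\,a_{0,1}\overline{a_{1,n-1}}\,a_{n-1,n}$ then cancel, so $d(S)=\overline{x}\,a_{n-1,n}+a_{0,1}\,y$. Hence $\overline{x}\,a_{n-1,n}$ and $a_{0,1}\,y$ are cohomologous up to the sign dictated by the conjugation convention; since the present paper records all such classes only up to unit scalar multiplication, this is precisely the asserted relation. As a sanity check, the degenerate case $n=3$ has $S=0$ and the identity $\overline{x}\,a_{2,3}=-a_{0,1}\,y$ already holds on the nose.

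I expect the principal obstacle to be purely bookkeeping: keeping the conjugation signs straight through $\overline{b_{0,\ell}}=-\sum a_{0,r}\,\overline{a_{r,\ell}}$, and checking that the interior cancellation is exact. I would organize this by indexing every summand of $d(S)$ by the pair $(p,q)$ of indices on its inner conjugated factor $\overline{a_{p,q}}$, then verifying that each pair with $1<p<q<n-1$ receives coefficient $+1$ from the $-\overline{b_{0,\ell}}\,a_{\ell,n}$ part (with $\ell=q$) and $-1$ from the $-a_{0,\ell}\,b_{\ell,n}$ part (with $\ell=p$); the only unmatched pairs are $p=1$ and $q=n-1$, giving the two boundary families above.
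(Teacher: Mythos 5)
Your proposal is correct and follows essentially the paper's own route: your cochain $S$ is exactly the paper's $\tilde{b}_{0,n}=\sum_{1<i<n-1}\overline{a_{0,i}}\,a_{i,n}$, and your telescoping computation is precisely the ``routine calculation'' of $d(\tilde{b}_{0,n})$ that the paper's proof asserts but does not write out (the paper instead motivates the identity by expanding $d(b_{0,n})=0$ in the special case where both products contain zero). The sign you obtain, $d(S)=\overline{x}\,a_{n-1,n}+a_{0,1}\,y$, is the correct one under the stated conventions (as your $n=3$ check confirms), and the resulting discrepancy with the lemma's wording is immaterial since the paper works up to unit scalar multiplication.
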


\begin{proof}
If both $x$ and $y$ contain zero, then we would have cochains $a_{1,n}$
and $a_{0,n-1}$ satisfying $d(a_{0,n-1})=x$ and $d(a_{1,n})=y$. Hence we
could define the cocycle $b_{0,n}$ \eqref{elements-masseyprod}.  In
that case we would have
\begin{align*}
d(b_{0,n})
& = d(\overline{a_{0,1}}a_{1,n}) + d(\overline{a_{0,n-1}}a_{n-1,n}) 
+ d(\tilde{b}_{0,n})  \\
& = -a_{0,1}y + \overline{x}a_{n-1,n}
+ d(\tilde{b}_{0,n})
= 0
\end{align*}
where 
\[
\tilde{b}_{0,n} = \sum_{1<i<n-1}\overline{a_{0,i}}a_{i,n}. 
\]
Even if $x$ and $y$ do not contain zero, so we don't have cochains
$a_{1,n}$ and $a_{0,n-1}$, we can still define $\tilde{b}_{0,n}$.
A routine calculation gives the desired value of  $d(\tilde{b}_{0,n})$.
\end{proof}

We also have Massey products in the {\SS} associated with a filtered {\DGA}
or a filtered differential graded module over a filtered {\DGA}.  
Though our {\CESS} is not associated with such a filtration, 
we can get around this as follows.  
Let
$T_{m}^{*} = \bigoplus_{i \ge 0}T_{m}^{i}$
be a bigraded comodule algebra with $i$ being the second grading
and the algebra structure given by the pairings
$T_{m}^{i}\otimes_{} T_{m}^{j} \to T_{m}^{i+j}$.

Recall that for a Hopf algebroid $(A,\Gamma)$ and a comodule algebra $M$ 
the cup product in the cobar complex $C = C_{\Gamma}(M)$ is given by 
\begin{align*}
\lefteqn{\left(\gamma_{1}\otimes_{}\dotsb\otimes_{}\gamma_{s}\otimes_{}m_{1}
\right)\cup
\left(\gamma_{s+1}\otimes_{}\dotsb\otimes_{}\gamma_{s+t}\otimes_{}m_{2}\right)}
\qquad\qquad\\
& = 
\gamma_{1}\otimes_{}\dotsb\otimes_{}\gamma_{s}
\otimes_{}m_{1}^{(1)}\gamma_{s+1}\otimes_{}\dotsb \otimes_{}
m_{1}^{(t)}\gamma_{s+t}\otimes_{} m_{1}^{(t+1)}m_{2} 
\end{align*}
where $\gamma_{i}\in \Gamma (m+1)$ and $m_{j} \in M$, 
and 
$m_{1}^{(1)}\otimes_{}\dotsb \otimes_{} m_{1}^{(t+1)}$
is the iterated coproduct on $m_{1}$.  
The coboundary operator is a derivation with respect to this product
and $C$ is a filtered {\DGA}; 
we have 
\[
d(x \cup y) = d(x) \cup y + (-1)^{\deg(x)}x \cup d(y). 
\]

Now we have consider the two quadrigraded {\CESS}s:
\setcounter{equation}{\value{thm}}
\begin{equation}\label{eq-WCESS-1}
\Ext_{G(m+1)}(\Ext_{\Gamma(m+2)}(T_{m}^{*}))
\implies \Ext_{\Gamma (m+1)}(T_{m}^{*}),
\end{equation}
\setcounter{thm}{\value{equation}}%
which is associated with a filtration on $C=C_{\Gamma
(m+1)}(T_{m}^{*})$, and \setcounter{equation}{\value{thm}}
\begin{equation}\label{eq-WCESS-2}
\Ext_{G(m+1)}(\Ext_{\Gamma(m+2)}(T_{m}^{*} \otimes E_{m+1}^{1}))
   \implies \Ext_{\Gamma(m+1)}(T_{m}^{*} \otimes E_{m+1}^{1}),
\end{equation}
\setcounter{thm}{\value{equation}}%
which is associated with a filtration on $C'=C_{\Gamma
(m+1)}(T_{m}^{*} \otimes E_{m+1}^{1})$.  
We may regard the {\CESS} of
\eqref{CESS-for-E_{m+1}^{1}} as a quotient of the degree $p^{i}-1$
component of \eqref{eq-WCESS-2}.

\bigskip

Since $C'$ is a left differential module over $C$, 
\eqref{eq-WCESS-2} is a module over \eqref{eq-WCESS-1}. 
Then we can make a similar product  
$\langle \alpha_{1}, \dotsc, \alpha_{j} \rangle$
with $\alpha_{i} \in H^{*} (C)$ $(1 \le i<j)$ 
and $\alpha_{j} \in H^{*} (C')$ 
under certain conditions. 
In particular, we will be interested in Massey products of the form
\setcounter{equation}{\value{thm}}
\begin{equation}\label{eq-muky}
\mu_{k}(y) = \langle \hh_{1,1}, \dotsc, \hh_{1,1}, y\rangle 
\qquad
\mbox{and}
\qquad 
\mu'_{k}(x) = \langle x, \hh_{1,1}, \dotsc, \hh_{1,1} \rangle 
\end{equation}
\setcounter{thm}{\value{equation}}%
with $k$ factors $\hh_{1,1}$.  
For $1<k<p$, $\mu_{k}(y)$ is defined only if $0 \in \mu_{k-1}(y)$.  
If $\mu_{k}(\mu_{p-k}(y))$ is defined
for some $k$, then it contains $\bb_{1,1}y$.

\begin{rem}
$\hh_{1,1} \in \Ext_{\Gamma (m+1)}^{1}(T_{m}^{p-1})$ is represented in the
cobar complex by
\[
x = -d (\wt_{1}^{p})
= \left(\wt_{1}\otimes_{}1+1\otimes_{}\wt_{1} \right)^{p}
        -1\otimes_{}\wt_{1}^{p} \\
\equiv \wt_{1}^{p}\otimes_{}1 \qquad \bmod (p), 
\]
which means that $\hh_{1,1}$ becomes trivial 
when we pass to $\Ext_{\Gamma (m+1)}^{1}(T_{m}^{p})$.
Similarly, we have
\[
x \cup x 
= d\left( x \cup \wt_{1}^{p} \right)
= d\left( \sum_{i>0}\binom{p}{i} \wt_{1}^{i}\otimes_{}\wt_{1}^{2p-i} \right).
\]
Thus 
$\hh_{1,1}\cup \hh_{1,1} \in \Ext_{\Gamma(m+1)}^{2}(T_{m}^{2p-2})$ 
maps trivially to 
$\Ext_{\Gamma(m+1)}^{2}(T_{m}^{2p-1})$.
\end{rem}

\begin{lem}\label{lem-xi}
Let $x_{1}=x$ as above and define $x_{i}$ inductively on $i$ by 
\[
x_{i}=\left(x_{i-1}\cup \wt_{1}^{p}- \wt_{1}^{p}\cup x_{i-1} \right)/i 
\qquad
(1<i<p).
\]
Then $x_{i}$ is in $C_{\Gamma (m+1)} (T_{m}^{(i-1)(p-1)})$ and it satisfies 
\[
x_{i} \equiv (-1)^{i+1}\wt_{1}^{ip}\otimes_{}1/i! \quad \bmod (p) 
\qquad
\mbox{and}
\qquad
d(x_{i}) = \sum_{0<j<i}x_{j}\cup x_{i-j}. 
\]
\end{lem}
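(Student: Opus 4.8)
The plan is to prove all three assertions simultaneously by induction on $i$, carrying out the computation in the cobar complex $C_{\Gamma(m+1)}(T_{m}^{*})$, where $\wt_{1}^{p}$ is a genuine $0$-cochain and the identity $d(\wt_{1}^{p})=-x_{1}$ from the preceding Remark is available; this identity is the single input that drives the whole induction. The base case $i=1$ is exactly the content of that Remark: $x_{1}=-d(\wt_{1}^{p})$ is a cocycle with $x_{1}\equiv\wt_{1}^{p}\otimes 1 \bmod p$ and $d(x_{1})=0$, which matches the empty sum $\sum_{0<j<1}$, and in the second grading (the power of $\wt_{1}$ in the comodule factor) it is represented by $\wt_{1}^{p}\otimes 1$, of weight $0=(1-1)(p-1)$.

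For the congruence mod $p$, I would isolate the $\otimes 1$ (lowest weight) component of $x_{i}=(x_{i-1}\cup\wt_{1}^{p}-\wt_{1}^{p}\cup x_{i-1})/i$. Cupping on the right with the $0$-cochain $\wt_{1}^{p}$ multiplies the comodule factor by $\wt_{1}^{p}$, so $x_{i-1}\cup\wt_{1}^{p}$ contributes nothing in the $\otimes 1$ slot; the contribution comes from $\wt_{1}^{p}\cup x_{i-1}$ through the coaction, which mod $p$ is $\psi(\wt_{1}^{p})\equiv\wt_{1}^{p}\otimes 1+1\otimes\wt_{1}^{p}$ because $\binom{p}{k}\equiv 0$ for $0<k<p$. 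Substituting the inductive form $x_{i-1}\equiv(-1)^{i}\wt_{1}^{(i-1)p}\otimes 1/(i-1)!$ produces the $\otimes 1$ term $(-1)^{i+1}\wt_{1}^{ip}\otimes 1/(i-1)!$, and division by $i$ gives the claimed leading term.

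The grading assertion rests on the same coaction, read at the top weight rather than the bottom. If $x_{i-1}$ has second grading at most $(i-2)(p-1)$, then $x_{i-1}\cup\wt_{1}^{p}$ and $\wt_{1}^{p}\cup x_{i-1}$ both have weight at most $(i-2)(p-1)+p$, and their top-weight parts coincide: the former multiplies the comodule factor on the right by $\wt_{1}^{p}$, while the top coaction term $1\otimes\wt_{1}^{p}$ of $\psi(\wt_{1}^{p})$ multiplies it on the left, and these agree by commutativity of $T_{m}^{*}$. Hence the top weight cancels in the commutator, so $x_{i}$ has weight at most $(i-2)(p-1)+p-1=(i-1)(p-1)$, that is, it lies in $C_{\Gamma(m+1)}(T_{m}^{(i-1)(p-1)})$.

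The differential identity is the substantive step. Applying the Leibniz rule $d(ab)=d(a)b-\overline{a}\,d(b)$ to both cup products and using $d(\wt_{1}^{p})=-x_{1}$, together with the fact that every $x_{j}$ has odd total degree so that $\overline{x_{j}}=x_{j}$ (while $\overline{\wt_{1}^{p}}=-\wt_{1}^{p}$), one finds $i\,d(x_{i})=\bigl(d(x_{i-1})\cup\wt_{1}^{p}-\wt_{1}^{p}\cup d(x_{i-1})\bigr)+x_{1}\cup x_{i-1}+x_{i-1}\cup x_{1}$. I would then insert the inductive hypothesis $d(x_{i-1})=\sum_{0<j<i-1}x_{j}\cup x_{i-1-j}$ and expand the commutator with $\wt_{1}^{p}$ via the associative-algebra identity $[ab,c]=a[b,c]+[a,c]b$ and the defining recursion in the form $[x_{k},\wt_{1}^{p}]=(k+1)x_{k+1}$. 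This turns the commutator into $\sum_{0<j<i-1}\bigl((i-j)x_{j}\cup x_{i-j}+(j+1)x_{j+1}\cup x_{i-1-j}\bigr)$; after reindexing the second summand and adding the two boundary terms $x_{1}\cup x_{i-1}$ and $x_{i-1}\cup x_{1}$, the coefficient of each $x_{j}\cup x_{i-j}$ collects to exactly $i$, so dividing by $i$ yields $d(x_{i})=\sum_{0<j<i}x_{j}\cup x_{i-j}$. The main obstacle is precisely this bookkeeping: keeping the Leibniz and $\overline{(\,\cdot\,)}$ signs consistent and verifying that the reindexing together with the boundary terms really levels every coefficient to $i$. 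I would first check the pattern for $i=2$ (where $\overline{x_{1}}=x_{1}$ makes $d(x_{2})=x_{1}\cup x_{1}$ fall out at once) and $i=3$ to fix all conventions before writing the general step.
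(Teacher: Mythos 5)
Your proposal is correct and follows essentially the same route as the paper's proof: induction on $i$, with the grading statement obtained from the cancellation of the top contribution in the commutator, the congruence from the mod $p$ behaviour of $\psi(\wt_{1}^{p})$, and the derivation formula from the Leibniz rule together with the recursion $x_{k}\cup\wt_{1}^{p}-\wt_{1}^{p}\cup x_{k}=(k+1)x_{k+1}$ and the reindexing that levels every coefficient of $x_{j}\cup x_{i-j}$ to $i$. The one step to tighten is the congruence: the lemma asserts a congruence of the \emph{entire} cochain mod $p$, not only of its $\otimes 1$ component, so after substituting $x_{i-1}\equiv(-1)^{i}\wt_{1}^{(i-1)p}\otimes 1/(i-1)!$ you must also check that the cross terms $\wt_{1}^{(i-1)p}\otimes\wt_{1}^{p}$ produced by the two cup products cancel mod $p$; this is exactly the cancellation you already invoke for the grading claim (the $1\otimes\wt_{1}^{p}$ part of $\psi(\wt_{1}^{p})$ cancelling against right cupping with $\wt_{1}^{p}$), and it is what the paper's displayed computation $(\wt_{1}^{ip}\otimes 1)\cup\wt_{1}^{p}-\wt_{1}^{p}\cup(\wt_{1}^{ip}\otimes 1)\equiv-\wt_{1}^{(i+1)p}\otimes 1$ makes explicit. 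With that observation folded into the congruence step, your induction closes and agrees step for step with the paper's argument.
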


\begin{proof}
We prove these statements by induction. 
For the first statement, assume that 
$x_{i} \in C_{\Gamma (m+1)}(T_{m}^{(i-1)(p-1)})$. 
This means that it has the form
$c\wt_{1}^{i+p-1}\otimes_{}\wt_{1}^{(i-1)(p-1)}$
modulo $C_{\Gamma (m+1)}(T_{m}^{(i-1) (p-1)-1})$ 
for some scalar $c$, 
and so we have
\begin{align*}
x_{i+1} 
& = (x_{i} \cup \wt_{1}^{p} - \wt_{1}^{p}\cup x_{i} )/(i+1)  \\
& \equiv c (\wt_{1}^{i+p-1}\otimes_{}\wt_{1}^{(i-1)(p-1)+p} - \wt_{1}^{i+p-1}\otimes_{}\wt_{1}^{(i-1) (p-1)+p} ) /(i+1)
\equiv 0 
\end{align*}
modulo $C_{\Gamma (m+1)} (T_{m}^{i(p-1)})$. 
For the congruence, we see that 
\begin{align*}
(i+1)!x_{i+1} 
& = 
i!(x_{i}\cup \wt_{1}^{p}-\wt_{1}^{p}\cup x_{i})
\equiv 
(-1)^{i+1}\left((\wt_{1}^{ip}\otimes 1 )\cup \wt_{1}^{p}
-  \wt_{1}^{p} \cup (\wt_{1}^{ip}\otimes 1 ) \right)  \\
& = 
(-1)^{i+1}\left(\wt_{1}^{ip} \otimes \wt_{1}^{p}
-  \wt_{1}^{(i+1)p}\otimes 1  
-  \wt_{1}^{ip}\otimes \wt_{1}^{p} \right)
= 
(-1)^{i+2}\wt_{1}^{(i+1)p}\otimes 1. 
\end{align*}
For the derivation formula, we see that 
\begin{align*}
\lefteqn{(i+1)d(x_{i+1}) - x_{i}\cup x_{1} - x_{1}\cup x_{i}} 
\qquad\qquad  \\
& = 
d (x_{i})\cup \wt_{1}^{p} 
- \wt_{1}^{p}\cup d(x_{i})  \\
& = 
\left(\sum_{0<j<i} x_{j}\cup x_{i-j} \right)\cup \wt_{1}^{p}
- \wt_{1}^{p}\cup \left(\sum_{0<j<i} x_{j}\cup x_{i-j} \right)  \\
& = \sum_{0<j<i}
\Big( 
x_{j} \cup (x_{i-j} \cup \wt_{1}^{p} - \wt_{1}^{p} \cup  x_{i-j})
+ (x_{j} \cup  \wt_{1}^{p} - \wt_{1}^{p} \cup x_{j}) \cup x_{i-j}
\Big)  \\
& = \sum_{0<j<i}
\Big( 
(i+1-j) x_{j}\cup x_{i+1-j}
+ (j+1)x_{j+1} \cup x_{i-j}
\Big)  \\
& = 
(i+1)\sum_{1<j<i}x_{j}\cup x_{i+1-j}. 
\qedhere
\end{align*}
\end{proof}

The following result follows easily from \autoref{lem-xi}.

\begin{lem}\label{lem-definingmu}
Suppose that
$\alpha, \beta \in \Ext_{\Gamma (m+1)}(T_{m}^{h}\otimes_{} E_{m+1}^{2})$
are represented by cocycles $a_{1}$ and $b_{1}$, and that there are cochains
\[
a_{i}, b_{i}\in C_{\Gamma (m+1)}(T_{m}^{h+(i-1) (p-1)}\otimes_{}E_{m+1}^{2})
\quad
\mbox{for
$1<i\le k$}
\]
satisfying 
\[
d(a_{i}) = \sum_{0<j<i} a_{i-j}\cup x_{j}
\quad
\mbox{and}
\quad 
d(b_{i}) = \sum_{0<j<i} x_{j}\cup b_{i-j}, 
\]
where $x_{j}$ are as in \autoref{lem-xi}.
Then the Massey products
\[
\mu'_{k} (\alpha ), \mu_{k} (\beta ) \in 
\Ext_{\Gamma (m+1)}(T_{m}^{h+k (p-1)}\otimes_{} E_{m+1}^{2})
\]
are defined and are represented by the cocycles
\[
\sum_{0<i<k+1}a_{k+1-i} \cup x_{i} 
\quad
\mbox{and}
\quad 
\sum_{0<i<k+1}x_{i} \cup b_{k+1-i}.
\]
Moreover, we have 
$\alpha \mu_{k} (\beta ) = \mu '_{k} (\overline{\alpha} )\beta$
using these representatives.
\end{lem}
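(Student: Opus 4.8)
The plan is to recognize the elements $x_i$ of \autoref{lem-xi} as the ``diagonal'' of a defining system (in the sense recalled in \eqref{elements-masseyprod}) for the iterated Massey product of $\hh_{1,1}$ with itself, and then to graft the cochains $a_i$ and $b_i$ onto this system along the edge touching $\alpha$ and $\beta$ respectively. Concretely, to show that $\mu_k(\beta)$ is defined I would set up the $(k+1)$-fold product $\langle \hh_{1,1}, \ldots, \hh_{1,1}, \beta\rangle$, indexing its entries $1, \ldots, k+1$ with the first $k$ equal to $\hh_{1,1}$ (each represented by $x_1 = x$) and the last equal to $\beta$ (represented by $b_1$). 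I would declare the defining system to be $a_{j,\ell} = x_{\ell-j}$ for $\ell \le k$ and $a_{j,k+1} = b_{k+1-j}$, each up to a sign chosen to absorb the bar operation.

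First I would verify the defining-system equations $d(a_{j,\ell}) = \sum_{j<s<\ell}\overline{a_{j,s}}\,a_{s,\ell}$. For $\ell \le k$ this is exactly the relation $d(x_{\ell-j}) = \sum_{0<r<\ell-j} x_r \cup x_{\ell-j-r}$ of \autoref{lem-xi} after the reindexing $s = j+r$; for $\ell = k+1$ it is precisely the hypothesis $d(b_{k+1-j}) = \sum_{0<r<k+1-j} x_r \cup b_{k+1-j-r}$ under the same reindexing. The representing cocycle is then $b_{0,k+1} = \sum_{0<s<k+1}\overline{a_{0,s}}\,a_{s,k+1} = \sum_{0<i<k+1} x_i \cup b_{k+1-i}$, as claimed. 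The statement for $\mu'_k(\alpha)$ is the mirror image, using $a_{0,\ell} = a_\ell$ along the edge touching $\alpha$ together with the hypothesis $d(a_i) = \sum_{0<r<i} a_{i-r}\cup x_r$, and yields the representative $\sum_{0<i<k+1} a_{k+1-i}\cup x_i$.

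For the juggling identity I would combine the two defining systems into a single one for the $(k+2)$-fold expression with entries $\alpha, \hh_{1,1}, \ldots, \hh_{1,1}, \beta$: the $a_i$ fill the edge meeting $\alpha$, the $x_i$ fill the interior block of $\hh_{1,1}$'s, and the $b_i$ fill the edge meeting $\beta$; these are mutually consistent because their only overlap is the pure-$\hh_{1,1}$ block, where both prescriptions reduce to the $x_i$. Applying \autoref{lem-juggling} to this system, with $x = \sum_{0<i<k+1} a_{k+1-i}\cup x_i$ and $y = \sum_{0<i<k+1} x_i \cup b_{k+1-i}$, yields that $\overline{x}\cup b_1$ is cohomologous to $a_1 \cup y$, which is the asserted equality $\mu'_k(\overline{\alpha})\beta = \alpha\,\mu_k(\beta)$ once one records that $\overline{\hh_{1,1}} = \hh_{1,1}$ (its total degree being odd, so no extra signs enter along the interior block).

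\textbf{Main obstacle.} The one genuinely delicate point is the sign bookkeeping. The relations in \autoref{lem-xi} and in the hypotheses are written with plain cup products $x_r \cup x_{i-r}$ and $x_r \cup b_{i-r}$, whereas the defining-system convention \eqref{elements-masseyprod} carries the bar $\overline{a_{j,s}}$ on the left factor. I would therefore have to fix scalars $\varepsilon_{j,\ell} = \pm 1$ so that $a_{j,\ell} = \varepsilon_{j,\ell}\,x_{\ell-j}$ (and analogously $\varepsilon\,b_{k+1-j}$, $\varepsilon\,a_\ell$ on the edges) converts each $\overline{a_{j,s}}\,a_{s,\ell}$ back into the un-barred product, and then check that these choices are globally compatible across the whole triangular array. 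Verifying that the signs propagate consistently, and that they leave the final representing cocycle in the stated un-barred form, is where the real work lies; everything else is a direct transcription of \autoref{lem-xi} and \autoref{lem-juggling}.
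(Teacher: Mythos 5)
Your proposal is correct and coincides with the paper's own (essentially unwritten) proof: the paper justifies the lemma only by the remark that it ``follows easily from \autoref{lem-xi},'' and the intended argument is precisely your defining-system construction --- the $x_{i}$ filling the $\hh_{1,1}$-block, the cochains $a_{i}$ and $b_{i}$ grafted along the two edges, with \autoref{lem-juggling} supplying the final identity $\alpha\mu_{k}(\beta)=\mu'_{k}(\overline{\alpha})\beta$. Your ``main obstacle'' is also lighter than you fear: since every $x_{i}$ has odd total degree we have $\overline{x_{i}}=x_{i}$, so the barred relations \eqref{elements-masseyprod} agree verbatim with the unbarred hypotheses in the case of $\mu_{k}(\beta)$ (where all left-hand factors are $x_{i}$'s), and the only residual sign sensitivity lies in $\mu'_{k}(\alpha)$ when $\deg(\alpha)$ is even --- a point the paper itself glosses over, as all of its applications are stated only up to unit scalar multiplication.
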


Here are two examples of such products.

\begin{ex}\label{prop-2massey}
For $0<k<p$ and $\ell>0$, 
the Massey product $\mu_{k}(\bbeta'_{p\ell-k+1})$ is defined and it is represented by
\[
\sum_{0<i<k+1} x_{i} \cup 
(-1)^{k-i}\frac{(p\ell-k)!}{(p\ell-i)!}\bbeta'_{p\ell+1-i/k+1-i}.
\]
We have an equality 
$v_{1}\mu_{k} (\bbeta '_{p\ell +1-k}) = \mu_{k-1} (\bbeta '_{p\ell +2-k})/ (k-1-p\ell )$ 
for $k>1$.
\end{ex}

\begin{ex}\label{prop-3massey}
For $0<k<p$ and $\ell>0$, 
the Massey product
$\mu_{k}(\bbeta_{p\ell/p+2-k})$ is defined and it is represented by
\[
x_{1}\cup v_{2}^{-1}\uu_{p\ell+k-1-p}
+\sum_{1<i<k+1} x_{i} \cup 
(-1)^{i+1}\frac{(p\ell +k)!}{\ell (p\ell +k-i)!}
\bbeta_{p\ell+k-i/p+2-i}.
\]
\end{ex}

\end{document}